\newtheorem{theorem}{Theorem}[section]
\newtheorem{proposition}[theorem]{Proposition}
\newtheorem{definition}[theorem]{Definition}
\newtheorem{lemma}[theorem]{Lemma}
\newtheorem{example}[theorem]{Example}
\newtheorem{remark}[theorem]{Remark}
\newtheorem{note}[theorem]{Note}
\newcommand{\Ref}[1]{(\ref{#1})}
\newcommand{\Real}{\mathbb R}
\newcommand{\Int}{\mathbb Z}
\newcommand{\one}{\mathbb{1}}
\newcommand{\zero}{\mathbb{0}}
\newcommand{\trop}[1]{\mathcal{#1}}
\newcommand{\tG}{\trop{G}}
\newcommand{\tH}{\trop{H}}
\newcommand{\tJ}{\trop{J}}
\newcommand{\tS}{\trop{S}}
\newcommand{\tT}{\trop{T}}
\newcommand{\tU}{\trop{U}}
\newcommand{\tV}{\trop{V}}
\newcommand{\tW}{\trop{W}}
\newcommand{\To}{\longrightarrow }
\newcommand{\tUniS}{-\infty}
\newcommand{\al}{\alpha}
\newcommand{\bt}{\beta}
\newcommand{\gm}{\gamma}
\newcommand{\lm}{\lambda}
\newcommand{\TrS}{\oplus}
\newcommand{\OP}{\left(}
\newcommand{\CP}{\right)}
    \newenvironment{proof}{
    \smallskip
    \noindent\emph{Proof.}}{\hfill\(\Box\)
    \bigskip
    } \fi
\newcommand{\vMat}[4]{\OP \begin{array}{cc}
  #1 & #2 \\
  #3 & #4
\end{array}\CP}
\newcommand{\vvMat}[9]{\OP \begin{array}{ccc}
  #1 & #2 & #3\\
  #4 & #5 & #6\\
  #7 & #8 & #9\\
\end{array}\CP}
\newcommand{\ifdef}[3]{\ifthenelse{\equal{#1}{true}}{#2}{#3}}
\def\Skip{\vskip 1.5mm}
\def\pSkip{\vskip 1.5mm \noindent}
\newcommand{\etype}[1]{\renewcommand{\labelenumi}{(#1{enumi})}}
\def\eroman{\etype{\roman}}
\def\essn{{\operatorname{es}}}
\def\str{{\operatorname{str}}}
\def\({\left(}
\def\){\right)}
\def\sg{\sig}
\def\permanent{tropical determinant}
\def\multi{multicycle}
\def\nmulti{$n$-multicycle}
\def\pnmulti{proto-multicycle}
\def\invr{{\operatorname{-1}}}
\def\tang{{\operatorname{tan}}}
\newcommand{\tHinf}{\tH_{\zero}}
\def\ssingular{strictly singular}
\newcommand{\Det}[1]{ \left|{#1}\right|}
\def\tdet{determinant}
\def\trn{{\operatorname{t}}}
\def\cyc{C}
\def\pth{p}
\def\grph{G}
\def\quasi{quasi}
\def\tGz{\tG_\zero}
\def\tHz{\tH_\zero}
\def\tTz{\tT_\zero}
\def\VHmu{(V,\tHz,\mu)}
\def\RGnu{(R,\tGz,\nu)}
\def\RTGnu{(R,\tT,\tGz,\nu)}
\def\ghost{\text{ghost}}
\def\a{\alpha}
\def\sig{\sigma}
\def\one{\mathbb{1}}
\def\zero{\mathbb{0}}
\def\regular{nonsingular}
\def\um{I}
\def\zm{Z}
\def\nb{\nabla}
\def\out{{\operatorname{out}}}
\def\inn{{\operatorname{in}}}
\def\odeg{d_\out}
\def\ideg{d_\inn}
\newcommand{\len}[1]{\operatorname{\ell}(#1)}
\newcommand{\adj}[1]{\,\operatorname{adj}(#1)}
\newcommand{\trace}[1]{\operatorname{tr}(#1)}
\def\bR{\bar{R}}
\def\rone{{\one_R}}
\def\rzero{{\zero_R}}
\def\fzero{{\zero_F}}
\def\la{\lambda}
\def\R{\mathbb R}
\newcommand{\diag}{\operatorname{diag}}
\newtheorem{thm}[theorem]{Theorem}
\newtheorem*{thm*}{Theorem}
\newtheorem*{dig*}{Digression}
\newtheorem{cor}[theorem]{Corollary}
\newtheorem{lem}[theorem]{Lemma}
\newtheorem{rem}[theorem]{Remark}
\newtheorem{prop*}{Proposition}
\newtheorem{prop}[theorem]{Proposition}
\newtheorem{defn}[theorem]{Definition}
\newtheorem*{examp*}{Example}
\newtheorem*{examples*}{Examples}
\newtheorem*{remark*}{Remark}
\newtheorem*{defn*}{Definition}
\newtheorem*{note*}{Note}
\begin{document}
\title[Supertropical matrix algebra] {Supertropical matrix algebra}

\author{Zur Izhakian}\thanks{The first author is supported by the
Chateaubriand scientific post-doctorate fellowship, Ministry of
Science, French Government, 2007-2008}
\thanks{This research is supported  by the
Israel Science Foundation (grants No. 1178/06 and 448/09).}
\address{Department of Mathematics, Bar-Ilan University, Ramat-Gan 52900,
Israel} \address{ \vskip -6mm CNRS et Universit´e Denis Diderot
(Paris 7), 175, rue du Chevaleret 75013 Paris, France}
\email{zzur@math.biu.ac.il}
\author[Louis Rowen]{Louis Rowen}
\address{Department of Mathematics, Bar-Ilan University, Ramat-Gan 52900,
Israel} \email{rowen@macs.biu.ac.il}

\thanks{\textbf{Acknowledgement} The authors would like to thank the
referee for many helpful comments.}

\subjclass[2000]{Primary 15A09, 15A03, 15A15, 65F15; Secondary
16Y60 }

\date{April 2008}


\keywords{Matrix algebra, tropical algebra, supertropical algebra,
determinant, characteristic polynomial, eigenvector, eigenvalue,
Hamilton-Cayley theorem, semirings.}


\begin{abstract}

 The objective of this paper is to develop a general algebraic
 theory of supertropical matrix algebra, extending \cite{zur05TropicalRank}.
 Our main results are as follows:
 \begin{itemize} \item The tropical determinant (i.e.,
 permanent) is multiplicative when all the determinants involved are
 tangible.
 \item There exists an adjoint matrix $\adj{A}$ such that the
 matrix
 $A\! \adj{A}$ behaves much like the identity matrix (times $|A|$).
\item Every matrix $A$ is a supertropical root of its
Hamilton-Cayley
 polynomial $f_A$. If these roots are distinct, then $A$ is
 conjugate (in a certain supertropical sense) to a diagonal matrix.
  \item  The tropical determinant of a matrix
 $A$ is a ghost iff the rows of $A$ are tropically dependent, iff the columns of $A$ are tropically dependent.
\item Every root of $f_A$ is a ``supertropical'' eigenvalue of
 $A$ (appropriately defined), and has a tangible supertropical eigenvector.
 \end{itemize}
\end{abstract}

\maketitle




\section{Introduction}
\numberwithin{equation}{section}

 In \cite{IzhakianRowen2007SuperTropical}, the abstract
foundations of supertropical algebra were set forth, including the
concept of a supertropical domain and supertropical semifield. The
motivation was to overcome the difficulties inherent in studying
polynomials over the max-plus algebra, by providing an algebraic
structure that encompasses the max-plus algebra, thereby
permitting a thorough study of polynomials and their roots and a
direct algebraic-geometric development of tropical geometry.

Similarly, although there has been considerable interest recently
in linear algebra over the max-plus algebra
\cite{ABG,Sturm4,Sturm3}, the weakness of the inherent structure
of the max-plus algebra has hampered a systematic development of
the matrix theory. The object of this paper is to lay the
groundwork for such a theory over a supertropical domain, which
yields analogs  of much the classical matrix theory for the
max-plus algebra and also explains why other parts do not carry
over.

The max-plus algebra is a special kind of idempotent semiring. In
general, the matrix semiring over a semiring is also a semiring
(to be described below in detail), but often loses some of its
properties. So we also need to pinpoint some of those properties
that are preserved in such matrix semirings. Our underlying
structure is a
 \textbf{semiring with ghosts}, which we recall from \cite{IzhakianRowen2007SuperTropical} is a
triplet $\RGnu,$ where $R$ is a semiring with a unit element
$\rone$ and with zero element $\rzero$ (satisfying $\rzero \, r =
r \, \rzero   = \rzero$ for every $r\in R$, and often identified
in the examples with $-\infty$, as indicated below), $\tGz = \tG
\cup \{ \rzero \} $ is a semiring ideal called the \textbf{ghost
ideal}, and $\nu : R  \to  \tGz,$ called the \textbf{ghost map},
is an idempotent semiring homomorphism (i.e., which preserves
multiplication as well as addition).

We write $a^{\nu }$ for $\nu(a)$, called the $\nu$-\textbf{value}
of $a$.  Thus, $\rone^\nu $  is multiplicatively idempotent, and
serves as the unit element of $\tGz$. Two elements $a$ and $b$ in
$R$ are said to be \textbf{matched} if they have the same
$\nu$-value; we say that $a$ \textbf{dominates} ~$b$ if $a^\nu \ge
b^\nu$.

For tropical applications, we focus on the \textbf{tangible
elements}, which in this paper are defined as $\tT = R\setminus
\tGz$; they are defined more generally in
\cite{IzhakianRowen2007SuperTropical} (cf.~Remark~\ref{reduc}
below).  We write $\tTz$  for $\tT \cup \{ \zero_R \}$. (Although
$ \zero_R $ is a ghost element, being part of the ghost ideal, it
is useful to consider it together with the tangible elements when
considering linear combinations.)

 Next, in  \cite[Definition
3.5]{IzhakianRowen2007SuperTropical}, we defined a
\textbf{supertropical semiring}, which is a commutative semiring
with ghosts satisfying the extra properties: \pSkip
\begin{itemize}
 \item  $a+b   =  a^{\nu } \quad \text{if}\quad a^{\nu } =
 b^{\nu}$; \pSkip
 \item $a+b  \in \{a,b\},\ \forall a,b \in R \;s.t. \; a^{\nu }
\ne b^{\nu }.$ (Equivalently, $\tGz$ is ordered, via $a^\nu \le
b^\nu$ iff $a^\nu + b^\nu = b^\nu$.) \pSkip

\end{itemize}
Thus, $a^\nu = \rzero^\nu$ iff $$a = a+\rzero = \rzero^\nu =
 \rzero.$$ It follows that $a+b = \rzero$ iff $\max \{a^\nu, b^\nu\} = \rzero,$ iff $a= b = \rzero.$
 Hence, no nonzero element has an additive inverse.

In studying supertropical semirings in
\cite{IzhakianRowen2007SuperTropical}, we defined
 a \textbf{supertropical domain} to be a supertropical semiring for which $\tT$ is a monoid;
 we also assume here that the map $\nu _\tT : \tT \to \tG$ (defined as the restriction from $\nu$ to $\tT$) is
 onto. (See \cite[Remark~3.11]{IzhakianRowen2007SuperTropical} for
 some immediate consequences of this definition, including a version of cancellation.)
 We also defined a \textbf{supertropical semifield}
to be a supertropical domain $\RGnu$ for which $\tT$ is a
group.\Skip

Whereas the paper ~\cite{IzhakianRowen2007SuperTropical} focused
on the theory of polynomials and their roots over supertropical
semifields, in this paper
 we turn to the matrix theory of semirings with ghost ideals, and so bring in tropical determinants,
  i.e., permanents,   and tropical linear algebra. We obtain a multiplicative rule for the
tropical determinant (Theorem \ref{thm:detOfProd}), a tropical
theory of the adjoint matrix (Corollary   \ref{thm:idm1}, and
Theorems \ref{thm:idm0} and \ref{thm:idm}), a version of the
Hamilton-Cayley theorem (Theorem \ref{hamilton-Cayley}),
supertropical eigenvalues (Theorem \ref{eigen8}), and the fact
that a matrix is singular iff its rows, or its columns, are
(tropically) dependent (Theorem \ref{thm:base}). Some of our
results follow \cite{zur05TropicalRank}, which handled the special
case where $R$ is the ``extended tropical semiring'' of the real
numbers; the proofs here are somewhat more conceptual. Theorem
\ref{thm:base} is extended in
\cite{IzhakianRowen2009TropicalRank}, which relies on this paper.

``Linear algebra over a semiring'' is the title of a chapter in
Golan's book \cite[Chapter 17]{golan92}, and there already exists
a sizeable literature concerning linear algebra for the max-plus
algebra, as summarized in~\cite{ABG}; one major result there is
the existence of eigenvectors for matrices over max-plus algebras.
(See also \cite{Sturm3} and \cite{Sturm4} for results concerning
tropical determinants and tropical rank.) Nevertheless, the ghost
ideal here changes the flavor considerably, enabling us to define
and utilize adjoint matrices and also obtain a supertropical
version of the Hamilton-Cayley theorem, together with applications
to obtain tangible supertropical eigenvectors for all roots of the
characteristic polynomial.

To clarify our exposition for those versed in tropical
mathematics, the examples in this paper are presented for the
extended tropical semiring \cite{zur05TropicalAlgebra}, the
motivating example for supertropical semirings. For this semiring,
 denoted as $D(\Real)$, we have $\tT = \tG = \Real$,
$\rone = 0$, and $\rzero = \tUniS$, where its operations, $\TrS$
and ~$\cdot$ , are respective modifications of the standard $\max,
\ + \ $ operations over the reals. In other words, we use
\textbf{logarithmic notation} in all of our illustrations, whereas
in the theorems, we use multiplicative notation which is more in
accordance with the ring-theoretic structure, our source of
intuition. We hope this does not cause undue confusion.

Throughout this paper, as in \cite
{IzhakianRowen2007SuperTropical}, we assume that $\nu$ is given by
\begin{equation}\label{supert} \nu (a) = a+a.\end{equation} Although there are more general situations
of interest in supertropical algebra, they can often be reduced to
the setting here because of the following observation:

 \begin{rem}\label{reduc} Suppose $\RGnu,$
 is a semiring with ghosts satisfying $(\rone+\rone)^\nu =\rone^\nu $, (but not necessarily satisfying $ \rone + \rone=\rone^\nu $),
and $\tT \subseteq R \setminus \tGz$ is
 any multiplicative monoid. Taking $\tGz' = R
\rone^\nu \subseteq \tGz,$
 one can define a new semiring structure $\bar R = \tT \cup \tGz' $, as follows:

  Multiplication is the restriction to $\bR$ of multiplication in $R $, so
 $\rzero $ remains the zero  element, $\rone$~remains the unit element,
 and  $\rone^\nu $ still is multiplicatively idempotent.

  The new addition in $\bR$ is given by $\rzero  +r = r = r+ \rzero $ for all
  $r\in R;$ but now, the sum of two
elements $a$ and $b$ in $\bR$ is defined to be their sum in $ R$
if it lies in $\tT$, and is $(a+b)^\nu $ otherwise. In particular,
$a+a= a^\nu $ in $\bar R$, and $\bR$ is a supertropical semiring.
The ghost ideal of $\bR$ is $R \rone^\nu$, and the tangible part
is ~$\tT.$

Thus, we see that the ``tangible'' part of the algebraic
structures of $R$ and $\bR$ are the same, and in particular the
theorems in this paper about $M_n(\bR)$ also hold for $M_n(R)$.
  \end{rem}

  We write ``$a = b + \text{ghost}$'' to indicate that $a$
equals $b$ plus some undetermined ghost element. This can happen
in two ways: Either $a \in \tT$ (in which case $a=b$), or $a\in
\tG$ with $a^\nu\ge b^\nu$ (in which case $a = b+a$).

\begin{rem} If  $a = b + \text{ghost}$, then Equation~ \eqref{supert} implies $a + b = b +b + \text{ghost}\in \tGz$, although the converse might fail. \end{rem}

One difference with \cite{IzhakianRowen2007SuperTropical} is that
here we do not require our semirings to be commutative, since we
must deal with semirings of matrices. Nevertheless, we do have the
following important property:

\begin{rem}[\textbf{The Frobenius property}]\label{Frob0}  $(r+z)^m$ equals $r^m
+ z^m + \text{ghost}$, for all $m \in \mathbb N ^+$, $r\in R$, and
central $z\in R$. This is because $$(r+z)^m = r^m + z^m + \sum _{1
\le i <m} \binom mi r^i z^{m-i},$$ and each summand in the
summation is ghost since $\binom mi
> 1$ for $1 \le i <m$.
It follows that $(r+z)^m+ (r^m + z^m)$ is ghost, whenever $z$ is
central.
\end{rem}

\section{Tropical modules and matrices}

Modules over semirings (called \textbf{semimodules} in
\cite{golan92}) are defined just as modules over rings, except
that now the additive structure is that of a semigroup instead of
a group. (Note that subtraction does not enter into the other
axioms of a module over a ring.) Let us state this explicitly, for
the reader's convenience.

\begin{defn}\label{def:module0} Suppose $R$ is a semiring.
An \textbf{$R$-module} $V$ is a semigroup $(V,+,\zero_V)$ together
with a scalar multiplication $R\times V \to V$ satisfying the
following properties for all $r_i \in R$ and $v,w \in V$:
\begin{enumerate}
    \item $r(v + w) = rv + rw;$ \pSkip
    \item $(r_1+r_2)v = r_1v + r_2 v;$ \pSkip
    \item $(r_1r_2)v = r_1 (r_2 v);$ \pSkip
    \item $\rone v =v;$ \pSkip
    \item $\rzero v =\zero_V = r \zero_V .$
  \end{enumerate}
 \end{defn}

 Note that this definition of module over a
semiring $R$ coincides with the usual definition of module when
$R$ is a ring, taking $-v = (-\rone)v  .$

\begin{defn}\label{def:module} Suppose $\RGnu$ is a semiring with ghosts.
An \textbf{$R$-module with ghosts} $\VHmu$  is an $R$-module $V$,
together with a \textbf{ghost submodule} $\tHinf$ and an
$R$-module projection
$$\mu : \ V \To \tHinf$$ satisfying the following axioms for all
$r\in R$ and $v,w\in V:$
\begin{enumerate}
    \item $\mu(rv) =  r \mu (v)= r^\nu v$; \pSkip
    \item $\mu(v+w) = \mu(v) + \mu(w)$. \pSkip
 \end{enumerate}
 \end{defn}
\noindent  Note that (1) implies $\tGz V \subseteq \tHinf.$

Rather than developing the general module theory here, we content
ourselves with the following example.

\begin{example}  The direct sum $V = \bigoplus_{j \in \tJ} R$ of  copies
    (indexed by $\tJ$) of a  supertropical semiring $R$  is denoted as~$R^{(\tJ)},$
    with zero element $\zero_V = ( \rzero).$
    The ghost submodule is $\tGz^{(\tJ)}$.
    When $R$ is a supertropical semifield, $R^{(\tJ)}$~is called a
\textbf{tropical vector space} over $R$.\pSkip

If we take $\tJ = \{ 1, \dots, n \}$, then the tropical module
$R^{(\tJ)}$ is denoted as $R^{(n)}$, which is  the main example of
tropical linear algebra. The \textbf{tangible vectors} of
$R^{(n)}$ are defined as those $(a_1, \dots, a_n)$ such that each~
$a_i\in \tTz$, but with some $a_i \ne \rzero.$ (Note that there
may be vectors that are neither tangible nor ghost, having some
tangible components and some ghost components.)
\end{example}

\begin{defn} The \textbf{standard base} of $R^{(n)}$ is defined as
$$e_1 = (\rone,\rzero, \dots, \rzero),
\quad e_2 = (\rzero,\rone,\rzero, \dots, \rzero), \quad \dots,
\quad e_n = (\rzero,\rzero, \dots, \rone).$$
\end{defn}

Note that every element $(r_1, \dots, r_n)$ of $R^{(n)}$ can be
written (uniquely) in the form $\sum_{i=1}^n r_i e_i$.

\subsection{Matrices over semirings with ghosts}

It is standard that for any semiring $R$, we have the semiring
$M_n(R)$ of $n \times n$ matrices with entries in ~$R$, where
addition and multiplication are induced from~$R$ as in the
familiar ring-theoretic matrix construction. The unit element
 of
 $M_n(R)$ is the \textbf{identity matrix} $\um$ with $\rone$ on the main diagonal and
whose off-diagonal entries are $\rzero$.

Given the designated ghost ideal $\tGz$ of  $R = \RGnu$, we define
the \textbf{ghost ideal} $M_n(\tGz)$ of $M_n(R)$ and thus we
obtain the \textbf{matrix semiring with ghosts} $(M_n(R),
M_{n}(\tGz), \nu_*)$, where the ghost map  $\nu_*$ on $M_n(R)$ is
obtained by applying $\nu$ to each matrix entry.

\begin{rem}\label{Frob1} The Frobenius property (Remark \ref{Frob0}) implies that for any matrix $A$
over a commutative semiring $R$ with ghosts and any $\a \in R,$
the matrix $(A+\a I)^m$ equals $A^m + \a^m I + \text{ghost}$, in
$M_{n}(R).$ Note that $(A+\a I)^m$ can differ from $A^m + \a^m I;$
for example, take
 $A = \vMat{\rzero}{\rone}{\rone}{\rzero}$ with $m = 2$.
\end{rem}

\section {Tropical \tdet s}\label{vectorperm}

For the remainder of this paper, unless otherwise  specified, we
only consider matrices over supertropical domains $R = \RGnu$. A
typical matrix is denoted as $A =(a_{i,j})$; for example, the zero
matrix is $(\rzero)$. The tropical version of the determinant must
be the permanent, since we do not have negation at our disposal.
Nevertheless, its function in supertropical algebra is the analog
of the familiar determinant. In \cite{zur05TropicalAlgebra}, a
counterexample was given to the proposed formula $\Det{AB} =
\Det{A}\, \Det{B}.$  Let us see why such counterexamples exist, by
providing a conceptual development of the tropical determinant
that indicates when the formula does hold. As in classical
algebra, when we study tropical determinants, we assume as a
matter of course that the base semiring $R$ is commutative.

\begin{thm}\label{formula1}  Suppose $V = R^{(n)}$, taken with the standard
base $(e_1, \dots, e_n)$, over a supertropical (commutative)
semiring $R = \RTGnu$.

Define the function $\Phi_\gm: V^{(n)} \to R$ by the following
formula, where $v_i = (v_{i,1}, \dots, v_{i,n})$:
\begin{equation}\label{det1}
\Phi_\gm (v_1, \dots, v_n) =  \gm \sum _{\pi \in S_n}v_{1,\pi (1)}
\cdots v_{n,\pi (n)},\end{equation} where $\gm  \in R$ is fixed.
Then $\Phi_\gm $ satisfies the following properties: \pSkip
\begin{enumerate}\item $\Phi_\gm $ is linear in each component, in the
sense that $$ \begin{array}{ll}
                \Phi_\gm (v_1, \dots, \a _iv_i+\a _i'v_i',
\dots, v_n)= &
              \a _i\Phi_\gm (v_1, \dots, v_i, \dots, v_n)+ \a
_i'\Phi_\gm (v_1, \dots, v_i', \dots, v_n),
              \end{array}
  $$ for all $\a _i ,
\a _i'\in R$ and $v_i,v_i'$ in $V$. \pSkip

 \item  $\Phi_\gm (v_1,
\dots, v_n) \in \tGz$ if $v_i = v_j$ for some $i\ne j$. \pSkip

 \item  $\Phi_\gm (v_1,
\dots, v_n) = \rzero$ if $v_i = \zero_V$ for some $i$. \pSkip

\item $\Phi_\gm (v_{\pi(1)}, \dots, v_{\pi(n)}) = \Phi_\gm (v_1,
\dots, v_n),$ for all $\pi \in S_n$. \pSkip

\item $\Phi_\gm (e_1, \dots, e_n) = \gm .$ \pSkip
\end{enumerate}
Furthermore, $\Phi_\gm $ is unique up to ghosts, in the sense that
if $\Phi_\gm '$ is another function satisfying the same properties
(1)--(5), then either
$$\Phi_\gm '(v_1, \dots, v_n) = \Phi_\gm (v_1, \dots, v_n)$$
or $\Phi_\gm '(v_1, \dots, v_n)\in \tGz$, with $(\Phi_\gm '(v_1,
\dots, v_n))^\nu \ge (\Phi_\gm (v_1, \dots, v_n))^\nu$.
\end{thm}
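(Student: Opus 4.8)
The plan is to verify each of the five listed properties directly from the defining formula \eqref{det1}, and then establish the uniqueness-up-to-ghosts statement by a careful bookkeeping of which permutation-terms must survive. Properties (1)--(5) are essentially routine: linearity in each component (1) follows because $\sum_{\pi} v_{1,\pi(1)} \cdots v_{n,\pi(n)}$ is, for each fixed $i$, a sum of monomials each containing exactly one factor from row $i$, so distributivity of $R$ gives the bilinear split, and multiplication by the fixed scalar $\gm$ preserves this. Property (3) is immediate: if $v_i = \zero_V$ then every monomial contains a factor $v_{i,\pi(i)} = \rzero$, hence every monomial is $\rzero$, and $\gm \rzero = \rzero$. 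Property (5) is a direct substitution: when $v_k = e_k$ we have $v_{k,j} = \rone$ if $j = k$ and $\rzero$ otherwise, so the only surviving monomial is the one for $\pi = \mathrm{id}$, giving $\gm \rone \cdots \rone = \gm$. Property (4) is the permanent's symmetry: reindexing the $v$'s by $\pi$ permutes the set of monomials among themselves (replacing the summation variable $\sig$ by $\sig \circ \pi^{\invr}$, using commutativity of $R$ to reorder factors), so the total sum is unchanged.

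The one genuinely nontrivial property among (1)--(5) is (2): if $v_i = v_j$ for $i \neq j$ then $\Phi_\gm(v_1,\dots,v_n) \in \tGz$. Here the key observation is that the symmetric group $S_n$ is partitioned into orbits of size $2$ under right multiplication by the transposition $\tau = (i\ j)$, and for each such pair $\{\sig, \sig\tau\}$ the two corresponding monomials are \emph{equal} as elements of $R$: since $v_i = v_j$, swapping the roles of positions $i$ and $j$ in the product $v_{1,\sig(1)} \cdots v_{n,\sig(n)}$ leaves each factor's value unchanged (the factor $v_{i,\sig(i)}$ becomes $v_{j,\sig(i)} = v_{i,\sig(i)}$ and vice versa), and commutativity lets us reorder. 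Thus the sum groups into terms of the form $w + w$, and $w + w = w^\nu \in \tGz$ by the supertropical axiom \eqref{supert}; multiplying by $\gm$ keeps us in the ideal $\tGz$. I should be slightly careful that if some of these $w$ are $\rzero$ the argument still goes through, but $\rzero^\nu = \rzero \in \tGz$, so there is no issue.

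\textbf{The main obstacle} is the uniqueness statement. Given $\Phi_\gm'$ satisfying (1)--(5), the strategy is: write each $v_i = \sum_j v_{i,j} e_j$ and expand $\Phi_\gm'(v_1,\dots,v_n)$ using multilinearity (1), obtaining $\Phi_\gm'(v_1,\dots,v_n) = \sum_{f : \{1,\dots,n\} \to \{1,\dots,n\}} \(\prod_i v_{i,f(i)}\) \Phi_\gm'(e_{f(1)}, \dots, e_{f(n)})$. For $f$ not a permutation, some value is repeated, so by (2) the term $\Phi_\gm'(e_{f(1)},\dots,e_{f(n)})$ is a ghost (if two of the $e_{f(i)}$ coincide) --- and in fact one must also handle the case where $f$ misses some index, which is covered since $f$ failing injectivity on a finite set forces a repeat; these terms contribute only ghost. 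For $f = \pi$ a permutation, (4) gives $\Phi_\gm'(e_{\pi(1)},\dots,e_{\pi(n)}) = \Phi_\gm'(e_1,\dots,e_n) = \gm$ by (5). Hence $\Phi_\gm'(v_1,\dots,v_n) = \gm \sum_{\pi} \prod_i v_{i,\pi(i)} + \text{ghost} = \Phi_\gm(v_1,\dots,v_n) + \text{ghost}$, and by the dichotomy recalled in the excerpt (``$a = b + \text{ghost}$'' means either $a = b$ tangible, or $a \in \tG$ with $a^\nu \ge b^\nu$) this is exactly the claimed conclusion. The delicate point to get right is the interaction of the ghost terms with the main sum: adding ghosts can only increase the $\nu$-value or leave $\Phi_\gm'$ equal to $\Phi_\gm$, never produce a genuinely different tangible value, and one should phrase this using the supertropical addition axiom ($a + b \in \{a,b\}$ when $a^\nu \neq b^\nu$, and $a + b = a^\nu$ when $a^\nu = b^\nu$) applied to the tangible-or-ghost case analysis.
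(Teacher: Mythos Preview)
Your proposal is correct and follows essentially the same approach as the paper's proof: the paper merely asserts that \eqref{det1} satisfies (1)--(5) without detail, then proves uniqueness by exactly your multilinear expansion $\Phi'_\gm(v_1,\dots,v_n)=\sum_{j_1,\dots,j_n} v_{1,j_1}\cdots v_{n,j_n}\,\Phi'_\gm(e_{j_1},\dots,e_{j_n})$, invoking (2) for repeated indices and (4)--(5) for permutations. Your explicit pairing argument for (2) (via the transposition $(i\,j)$ and the supertropical axiom $w+w=w^\nu$) simply supplies the verification the paper omits.
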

\begin{proof}
First  of all, note that Formula \Ref{det1} satisfies the
conditions (1)--(5) of the assertion. Conversely, suppose
$\Phi'_\gm $ satisfies these conditions. Since $v_i = \sum v_{i,j}
e_j,$ we have (by linearity)
$$\Phi'_\gm (v_1, \dots, v_n) = \sum _{j_1, \dots, j_n} v_{1,j_1}\cdots
v_{n,j_n}\Phi'_\gm (e_{j_1},\dots  ,e_{j_n}) .$$
 When any $j_s =
j_t$, we get $\Phi'_\gm (e_{j_1}, \dots, e_{j_n}) \in \tGz$ by
property (2). If such ghost terms do  not dominate all the
$v_{1,\pi(1)}\cdots v_{n,\pi(n)}\Phi'_\gm (e_{\pi(1)},\dots
,e_{\pi(n)}),$ $\pi \in S_n$, then
$$\Phi'_\gm (v_1, \dots, v_n) = \sum _{\pi \in S_n}
v_{1,\pi(1)}\cdots v_{n,\pi(n)}\Phi'_\gm (e_{\pi(1)},\dots
,e_{\pi(n)}) =\gm\sum _{\pi \in S_n} v_{1,\pi(1)}\cdots
v_{n,\pi(n)}$$ since, by conditions (4) and (5),
$$\Phi'_\gm (e_{\pi(1)},\dots
,e_{\pi(n)})=\Phi'_\gm (e_1, \dots, e_n) = \gm .$$ This proves the
last assertion.
\end{proof}

\begin{rem}\label{detrep0} Condition (1) implies condition (3). Indeed,
$$\Phi_\gm (v_1, \dots, \zero_V, \dots, v_n) = \Phi_\gm (v_1, \dots, \zero_R v_i, \dots, v_n)
= \zero_R\Phi_\gm (v_1, \dots, v_i, \dots, v_n) = \zero_R.$$
\end{rem}

\begin{rem}\label{detrep} Actually, the same proof shows that $\Phi_\gamma$ satisfies the
following stronger property than (2):

 \begin{itemize} \item $\Phi_\gm (v_1,
\dots, v_n) \in \tGz$ if $v_i^\nu = v_j^\nu$ for some $i\ne j$ (in
other words, if the corresponding components have the same
$\nu$-values).
 \end{itemize}
Conversely, (1) and (4) imply that it is enough to verify (2) for
 the standard base $e_1, \dots, e_n.$ \end{rem}

 When $\gm  = \rone,$ we
denote $\Phi_\gm (v_1, \dots, v_n)$ as $\Det{v_1, \dots, v_n}$ and
call this the \textbf{normalized} version of~Formula~\Ref{det1}.
On the other hand, Theorem \ref{formula1} points to a strange
phenomenon: Ghosts produce ``noise'' which disrupts attempts to
provide an analog to the classical determinantal theory, as we
shall see.

We define the \textbf{tropical determinant} of a matrix $A =
(a_{i,j})$ as in Formula \Ref{det1} (normalized) applied to the
rows of $A$:
\begin{equation}\label{det2}
\Det{(a_{i,j})} = \sum _{\pi \in S_n}a_{1,\pi (1)} \cdots a_{n,\pi
(n)},\end{equation}
 which is the formula given in~\cite{zur05TropicalRank}. (Also see
 Remark~\ref{transpose}.)

\begin{rem}\label{transpose} Defining the \textbf{transpose}
$(a_{i,j})^{\trn}$ to be $(a_{j,i}),$ we have
$$\Det{(a_{i,j})^{\trn} } = \Det{(a_{i,j})},$$ in view of
Theorem \ref{formula1}, since
  $$\sum _{\pi \in S_n}a_{1,\pi (1)} \cdots
a_{n,\pi (n)} = \sum _{\pi \in S_n}a_{\pi (1),1} \cdots a_{\pi
(n),n}.$$\end{rem}

 As in classical linear algebra, we thus have   analogous results
 if we use columns instead of rows.

\begin{thm}\label{thm:detOfProd} For any $n \times n$ matrices over a supertropical semiring $R$, we have
$$\Det{AB}^\nu \ge \Det{A}^\nu \Det{B}^\nu,$$ with $\Det{AB} = \Det{A}\, \Det{B}$ whenever $\Det{AB} $ is
tangible. (In other words, $\Det{AB} = \Det{A}\Det{B}+\ghost.$)
\end{thm}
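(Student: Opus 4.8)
The plan is to expand $\Det{AB}$ directly from the definition and compare it term-by-term with $\Det{A}\Det{B}$. Write $C = AB$, so $c_{i,k} = \sum_{j} a_{i,j}b_{j,k}$. Then
$$\Det{AB} = \sum_{\pi \in S_n} c_{1,\pi(1)}\cdots c_{n,\pi(n)} = \sum_{\pi \in S_n}\; \prod_{i=1}^n \Bigl(\sum_{j} a_{i,j}b_{j,\pi(i)}\Bigr).$$
Distributing each product over the inner sums, a generic term is indexed by a pair $(\pi, \eta)$ where $\pi \in S_n$ and $\eta : \{1,\dots,n\} \to \{1,\dots,n\}$ is an arbitrary function (choosing $j = \eta(i)$ in the $i$-th factor), contributing $\prod_i a_{i,\eta(i)} b_{\eta(i),\pi(i)}$. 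The terms with $\eta$ a permutation, say $\eta = \sigma$, re-sum (after reindexing $\pi = \sigma\tau$ and using commutativity of $R$) to exactly $\sum_{\sigma}\sum_{\tau} \bigl(\prod_i a_{i,\sigma(i)}\bigr)\bigl(\prod_i b_{\sigma(i),\tau\sigma(i)}\bigr) = \Det{A}\Det{B}$. So the first step is this bookkeeping: $\Det{AB} = \Det{A}\Det{B} + \sum_{(\pi,\eta):\,\eta \notin S_n} \prod_i a_{i,\eta(i)}b_{\eta(i),\pi(i)}$.

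The second step is to show every term with $\eta \notin S_n$ is a ghost. If $\eta$ is not injective, then $\eta(i) = \eta(i')$ for some $i \ne i'$. Here I would invoke Theorem~\ref{formula1} / Remark~\ref{detrep}: fixing the data and viewing the relevant sub-expression as $\Phi_\gm$ applied to a tuple of vectors two of which are $\nu$-equal forces a ghost value — concretely, $b_{\eta(i),\pi(i)}$ and $b_{\eta(i'),\pi(i')}$ share the row index $\eta(i)$ of $B$, and summing over the choices of $\pi$ that permute $\pi(i),\pi(i')$ among themselves produces $b_{\eta(i),s}b_{\eta(i),t} + b_{\eta(i),t}b_{\eta(i),s}$, a sum of two equal terms, hence ghost by the supertropical axiom $a + a = a^\nu$, and a ghost multiple stays ghost since $\tGz$ is an ideal. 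Thus $\Det{AB} = \Det{A}\Det{B} + \ghost$.

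The third step extracts the stated consequences. From $\Det{AB} = \Det{A}\Det{B} + \ghost$ and $\eqref{supert}$ (so that $x + \ghost$ has $\nu$-value $\ge x^\nu$), we get $\Det{AB}^\nu \ge (\Det{A}\Det{B})^\nu = \Det{A}^\nu\Det{B}^\nu$, using that $\nu$ is a homomorphism. And if $\Det{AB}$ is tangible, then since a tangible element cannot equal a proper sum of itself with a dominating-or-equal ghost, the ghost correction term must be strictly dominated, forcing $\Det{AB} = \Det{A}\Det{B}$ outright (here one uses the supertropical trichotomy: $\Det{A}\Det{B} + g$ with $g$ ghost equals a tangible element only when $g^\nu < (\Det{A}\Det{B})^\nu$, in which case the sum is $\Det{A}\Det{B}$).

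I expect the main obstacle to be the second step — cleanly arguing that every "off-permutation" term is ghost without drowning in index manipulation. The slick route is to group the expansion by the value-multiset of $\eta$ and reduce each non-injective class to an application of the symmetry+diagonal-ghost properties already packaged in Theorem~\ref{formula1}, rather than re-deriving the ghost conclusion by hand for each term; getting the reindexing and the grouping right is the delicate part. A secondary subtlety worth stating carefully is the final tangibility deduction, which relies on the precise form of supertropical addition (that $x + g$ is tangible $\Rightarrow$ $g^\nu < x^\nu$ and $x+g = x$), rather than on any cancellation.
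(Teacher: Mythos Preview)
Your argument is correct, but it takes a different route from the paper's. The paper's proof is a two-line application of the \emph{uniqueness} clause of Theorem~\ref{formula1}: one defines $\Phi_{\Det{B}}(v_1,\dots,v_n) = \Det{AB}$ (where $v_1,\dots,v_n$ are the rows of $A$), checks that this function satisfies properties (1)--(5) with $\gm = \Det{B}$, and concludes immediately from uniqueness that $\Det{AB}$ equals $\Det{A}\,\Det{B}$ unless it is a ghost dominating $\Det{A}\,\Det{B}$. No expansion of $\Det{AB}$ is ever written out.

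By contrast, you expand $\Det{AB}$ completely as a double sum over $(\pi,\eta)$, split off the injective-$\eta$ piece as $\Det{A}\,\Det{B}$, and show the remaining terms are ghost by a transposition pairing (equivalently, by recognizing $\sum_\pi \prod_i b_{\eta(i),\pi(i)}$ as the determinant of a matrix with a repeated row and invoking property~(2) of Theorem~\ref{formula1}). This is the classical Cauchy--Binet-style argument transplanted to the supertropical setting. It is more explicit and makes the source of the ghost correction visible (non-injective index maps), but it is longer and repeats work already packaged into the uniqueness statement. Your second step would read more cleanly if you grouped by $\eta$ first, factored out $\prod_i a_{i,\eta(i)}$, and applied Theorem~\ref{formula1}(2) directly to the remaining sum over $\pi$, rather than rediscovering the pairing by hand; your third step is fine and is exactly the supertropical trichotomy the paper relies on implicitly.
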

\begin{proof} Define $\Phi_{\Det{B}}(A) =  \Det{AB}.$ This
satisfies all of the properties of Theorem \ref{formula1}, taking
$\gm = \Det{B},$ so must be $\gm \Det{A} = \Det{A}\,\Det{B}$
except   when $\Det{AB}$ is ghost and dominates $\Det{A} \,
\Det{B}$.
\end{proof}

\subsection{Tropically singular and nonsingular matrices}

We start this subsection with the supertropical version of the
terms ``nonsingular'' and ``singular, '' to be contrasted with the
classical notion of invertibility:
\begin{defn}
 A matrix $A$ is \textbf{\regular } if $\Det{ A }\in \tT$;
on the other hand, when $\Det{ A } \in \tGz$,  we say that $A$ is~
\textbf{singular}.
 When $\Det{ A } = \rzero$, we say that $A$ is
\textbf{\ssingular}.
\end{defn}

Note that if $\Det{A}$ is any ghost $\ne \rzero,$ then $A$ is
singular but not \ssingular. Although the two concepts of singular
and \ssingular\ are analogous, the approach to their theories are
quite different.

\begin{remark}\label{rmk:zeroDet} Let us study
determinants via permutations, utilizing Formula \eqref{det2} to
analyze~$\Det{A}$ where $A = (a_{i,j})$. Clearly $$\nu(\Det{A}) =
\nu(a_{1,\sig(1)} \;  \cdots \; a_{n,\sig(n)})  $$ iff
$a_{1,\sig(1)} \;  \cdots \; a_{n,\sig(n)}$, $\sig \in S_n$,  has
the maximal $\nu$-value of all such products.  We say a
permutation $\sig \in S_n$
 \textbf{attains}~$\Det{A}$ if $\Det{A}^\nu = (a_{\sig (1),1} \cdots a_{\sig
(n),n})^\nu.$

\begin{itemize}
\item By definition, some permutation always attains $\Det{A}$.
\pSkip \item If  there is a unique permutation $\sig$ which
attains $\Det{A}$, then  $\Det{A}= a_{1,\sig(1)} \;  \cdots \;
a_{n,\sig(n)}$. In this case, when $\Det{A}$  is ghost, then some
$a_{i,\sig(i)}$ must be ghost. \pSkip

\item If at least two permutations  attain $\Det{A}$, then $A$
must be singular. Note in this case that if we replaced all
nonzero entries of $A$ by tangible entries of the same
$\nu$-value, then $A$ would still be singular. \pSkip

\item When $A$  is nonsingular, there is a unique permutation
$\sig$  which attains $\Det{A}$; in this case each $a_{i,\sig(i)}$
is tangible.  \pSkip

 \item  When $\Det{ A } =
\rzero$, then every permutation attains $\Det{A}$, so we must have
$$ a_{1,\sig(1)}
\;  \cdots \; a_{n,\sig(n)} = \rzero $$ for each $\sig \in S_n$.
Accordingly,  for each permutation $\sig$, at least one of the
$a_{i,\sig(i)}$ is $\rzero$ (where $i$ depends on $\sig$).
\end{itemize}
\end{remark}

 Thus, $\Det{ A } = \rzero$ iff ``enough'' entries are $\rzero$ to force each
summand in Formula \eqref{det2} to be $\rzero$.  This is a very
strong property, which in classical matrix theory provides a
description of singular subspaces. We elaborate this idea in
Proposition~\ref{ssing}.

 We write $P_\sig$ for the
\textbf{permutation matrix} whose entry in the $(i, \sig (i))$
position is $\rone$ (for each $1\le i \le n$) and $\rzero$
elsewhere; $P_\sig$ is \regular \ for any $\sig \in S_n$.
Likewise, we write $\diag\{a_1, \dots, a_n\}$ for the
\textbf{diagonal} matrix whose entry in the $(i,i)$ position is
$a_i \in R$ and $\rzero$ elsewhere.

\begin{example} Any permutation matrix ${P_\sig}$
is (classically) invertible; indeed, ${P_\sig}^{\invr}
=P_{\sig^{\invr}}$. Also, the diagonal matrix $\diag\{a_1,\dots,
a_n\}$ is invertible iff each $a_i$ is invertible in $R$, for then
$$\diag\{a_1, \dots, a_n\}^\invr = \diag\{a_1^\invr, \dots,
a_n^\invr\}.$$
\end{example}

The following easy result should be well known.
\begin{prop}\label{prop:invr} Suppose $R$ is a supertropical semiring. A matrix $A\in M_n(R)$ is (multiplicatively)
invertible, iff $A$ is a product of a permutation matrix with an
 invertible diagonal matrix.
\end{prop}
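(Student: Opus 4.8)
The "if" direction is essentially the computation already recorded in the Example preceding the statement: a permutation matrix $P_\sig$ is invertible with inverse $P_{\sig^{\invr}}$, and an invertible diagonal matrix $\diag\{a_1,\dots,a_n\}$ has inverse $\diag\{a_1^\invr,\dots,a_n^\invr\}$; since a product of invertible matrices is invertible, any $A = P_\sig\diag\{a_1,\dots,a_n\}$ (with each $a_i$ invertible in $R$) is invertible in $M_n(R)$. So the content is the "only if" direction, and that is where I would put the work.

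For the forward direction, suppose $AB = BA = I$. The plan is to extract structural information from $\Det{\cdot}$ and then from the entrywise equations. First, apply Theorem~\ref{thm:detOfProd}: $\Det{I} = \rone$ is tangible, so from $\Det{AB}=\Det{I}=\rone$ and the tangibility clause we get $\Det{A}\,\Det{B}=\rone$; in particular $\Det{A}$ is not a ghost (a ghost times anything is a ghost), so $A$ is \regular, and by Remark~\ref{rmk:zeroDet} there is a \emph{unique} permutation $\sig$ attaining $\Det{A}$, with every $a_{i,\sig(i)}$ tangible and $\Det{A}=a_{1,\sig(1)}\cdots a_{n,\sig(n)}$ tangible. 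Likewise $B$ is \regular\ with a unique attaining permutation $\tau$. Replacing $A$ by $P_{\sig}^{\invr}A = P_{\sig^{\invr}}A$, which is again invertible and whose attaining permutation is the identity, we may assume $\sig = \mathrm{id}$: so $A$ has tangible entries $a_{1,1},\dots,a_{n,n}$ on the diagonal, $\Det{A}=a_{1,1}\cdots a_{n,n}\in\tT$, and every off-diagonal product route has strictly smaller $\nu$-value than the diagonal one.

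The remaining step is to show that with $\sig=\mathrm{id}$ the matrix $A$ is forced to be diagonal. Here I would use the equations $AB = I$ and $BA = I$ together with the fact that in a supertropical semiring a sum equals $\rone$ (tangible) only if exactly one summand is $\rone$ and all the rest have strictly smaller $\nu$-value (in particular, no two summands can be matched). Look at the $(i,i)$ entry of $AB$: $\sum_k a_{i,k}b_{k,i} = \rone$, so there is a distinguished index $k=k(i)$ with $a_{i,k}b_{k,i}=\rone$ dominating the other terms; since $a_{i,i}$ is a unit and $a_{i,i}b_{i,i}$ appears in this sum, one shows $k(i)=i$, i.e. $a_{i,i}b_{i,i}=\rone$ and $a_{i,i}$ is invertible with $a_{i,i}^{\invr}=b_{i,i}$. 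Then, using an off-diagonal equation — the $(i,j)$ entry of $AB$ is $\rzero$ for $i\ne j$, hence every product $a_{i,k}b_{k,j}=\rzero$, so no nonzero entry $a_{i,k}$ can be "completed" by a nonzero $b_{k,j}$ — combined with the dominance condition coming from $\Det{A}$ (any would-be nonzero off-diagonal $a_{i,j}$, $i\ne j$, together with the complementary diagonal entries gives a permutation product that must be $\nu$-dominated by $a_{1,1}\cdots a_{n,n}$, forcing $a_{i,j}^{\nu}$ to be dominated by $a_{i,i}^{\nu}a_{j,j}^{\nu}(a_{i,j}^{\invr}\text{-free})$ — i.e. $a_{i,j}$ "too small to matter"), one squeezes out that $a_{i,j}=\rzero$ for all $i\ne j$. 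Once $A$ is diagonal with invertible diagonal entries, we have $A = \diag\{a_{1,1},\dots,a_{n,n}\}$ and, undoing the reduction, the original $A$ equals $P_{\sig}\diag\{a_{1,1},\dots,a_{n,n}\}$, as required.

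The main obstacle I anticipate is the last step — ruling out nonzero off-diagonal entries of $A$ (equivalently of $B$) rather than merely pinning down the diagonal. The tricky point is that supertropical arithmetic allows "ghost noise": the entrywise equations $AB=I$ give cancellation-type constraints only when the relevant sums are tangible, and one has to be careful that a nonzero off-diagonal $a_{i,k}$ could in principle be absorbed into a ghost without contradicting $AB=I$ directly. The resolution is to play the $AB=I$ relations against the sharp information from $\Det{A}$ being tangible with a \emph{unique} attaining permutation (Remark~\ref{rmk:zeroDet}): a nonzero off-diagonal $a_{i,k}$ either creates a second competing permutation for $\Det{A}$ — contradicting uniqueness/tangibility — or is $\nu$-dominated to the point of being irrelevant, and then the $(i,i)$-entry equation of $BA$ (the symmetric side) together with $b_{k,i}=a_{k,k}^{\invr}$-type identities forces it to $\rzero$. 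Using both $AB=I$ and $BA=I$, plus commutativity of $R$, should close the gap.
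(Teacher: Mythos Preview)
Your reduction --- using Theorem~\ref{thm:detOfProd} to get $\Det{A}$ invertible and tangible, passing to the unique attaining permutation via Remark~\ref{rmk:zeroDet}, and left-multiplying by $P_{\sig^{\invr}}$ to make $\sig=\mathrm{id}$ --- is exactly what the paper does. The divergence is in the endgame, where you make the problem look harder than it is.

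The paper goes one step further in the reduction: since $\Det{A}=a_{1,1}\cdots a_{n,n}$ is invertible in the commutative semiring $R$, each $a_{i,i}$ is itself invertible, so one may multiply by $\diag\{a_{1,1}^{\invr},\dots,a_{n,n}^{\invr}\}$ and assume $A=I+B$ with $B$ zero on the diagonal. Writing $A^{\invr}=D'+B'$ with $D'$ diagonal and $B'$ zero on the diagonal, the equation $I=AA^{\invr}=D'+B'+BD'+BB'$ is then examined \emph{off the diagonal}, where each entry is $\rzero$. Now the decisive fact is simply that in a supertropical semiring a sum is $\rzero$ iff every summand is $\rzero$ (recalled just after the axioms). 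This forces $B'_{i,j}=\rzero$ immediately, hence $B'=(\rzero)$; the diagonal then reads $D'=I$, and finally $b_{i,j}d'_{j,j}=b_{i,j}\cdot\rone=\rzero$ gives $B=(\rzero)$. Three lines, no dominance arguments, no appeal back to the determinant.

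Your anticipated obstacle --- ``ghost noise'' absorbing a stray off-diagonal entry --- does not arise, because the off-diagonal target is $\rzero$, not merely ghost, and $\rzero$ is rigid under addition. Your proposed resolution via determinant dominance is a red herring: uniqueness of the attaining permutation only bounds $\nu$-values and can never force an entry to equal $\rzero$. The steps you leave unexplained (``one shows $k(i)=i$'', ``one squeezes out that $a_{i,j}=\rzero$'') \emph{can} be filled in along your lines --- use the off-diagonal of $BA=I$ together with invertibility of $a_{i,i}$ to get $b_{m,i}=\rzero$ for $m\ne i$, then the diagonal of $AB=I$ to get $b_{i,i}$ invertible, then the off-diagonal of $AB=I$ to kill $a_{i,j}$ --- but this is just the paper's argument without the normalization that makes it transparent.
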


\begin{proof} Any invertible  matrix $A$
is \regular , by Theorem~\ref{thm:detOfProd}, since $\Det{A
A^\invr} = \rone.$ Thus, for the permutation~$\sig$ attaining
$\Det{A}$, we have $\{a_{\sig (1),1 }, \ \dots, \ a_{\sig
(n),n}\}\in \tT.$. Replacing $A$ by $ P_{\sig ^\invr}A,$ we may
assume that the diagonal of $A$ is tangible; then, multiplying
through by a suitable diagonal matrix, we may assume that the
diagonal of $A$ is the identity matrix $\um$. In other words, $A$
has the form $A = \um + B$ for some matrix $B$ which is $\rzero$
on the diagonal. Also, write $A^\invr = D' + B'$ where $D'$ is
diagonal and $B'$ is $\rzero$ on the diagonal. But then, $\um =
AA^\invr = D' + BD' + B' +BB',$ which can be $\rzero$ off the
diagonal only if $B = B' = (\rzero).$
\end{proof}


\begin{remark} \label{rmk:maximalSG}
The set $$\tW = \{ \ Q_\sig = P_\sig D \ |  \  D \text{ is
  invertible diagonal}\},$$ which by Proposition \ref{prop:invr}
comprises the unique maximal subgroup of $M_n(R)$ (having the same
identity element $\um$),
 is in fact the (affine) Weyl group when $\tT = \Int$;
 cf.~\cite{Hall}.
\end{remark}

Thus, invertibility in supertropical matrices is a strong concept,
and we want to consider the weaker notion of \regular ity. We
start by asking when the power of a nonsingular matrix is
nonsingular.

\begin{example}\label{2by2} Let us compute $\Det{A^2}$, for any $2\times
2$ matrix $$A = \( \begin{matrix} a_{1,1} & a_{1,2} \\
a_{2,1} & a_{2,2}
\end{matrix}\) ,$$ and compare it to $\Det{A}$.
Clearly $A^2 = \( \begin{matrix} (a_{1,1})^2 + a_{1,2} \, a_{2,1} & a_{1,2} \, (a_{1,1}+a_{2,2}) \\
a_{2,1} \, (a_{1,1}+a_{2,2}) & (a_{2,2})^2 + a_{1,2} \, a_{2,1}
\end{matrix}\), $
so \begin{equation}\begin{aligned}\Det{A^2} & = ((a_{1,1})^2 +
a_{1,2} \, a_{2,1})((a_{2,2})^2 + a_{1,2} \, a_{2,1}) +
(a_{1,1}+a_{2,2})^2 \ a_{1,2} \, a_{2,1} \\ &= 
\nu((a_{1,1})^2+(a_{2,2})^2 \, a_{1,2} \, a_{2,1})+(a_{1,1})^2 \,
(a_{2,2})^2+(a_{1,2})^2 \, (a_{2,1})^2
 + \nu(a_{1,1} \, a_{2,2} \, a_{1,2} \, a_{2,1}) \\ &=
 \nu(((a_{1,1})^2+(a_{2,2})^2) \, a_{1,2} \, a_{2,1})
 +(a_{1,1} \, a_{2,2}+a_{1,2} \, a_{2,1})^2.\end{aligned}\end{equation}
The right side is ghost when
\begin{equation}\label{except}\nu(((a_{1,1})^2+(a_{2,2})^2) \, a_{1,2} \, a_{2,1}) \ge  \nu((a_{1,1}
\, a_{2,2}+a_{1,2} \, a_{2,1})^2).\end{equation} Assuming that
$a_{1,1}^\nu \ge a_{2,2}^\nu$, we get \eqref{except} iff $\nu
((a_{1,1})^2) \ge  \nu(a_{1,2} \, a_{2,1}) \ge  \nu
((a_{2,2})^2)$. (The situation for $a_{1,1}^\nu \le a_{2,2}^\nu$
is symmetric.) Let us examine the various cases in turn, where
$A^2$ is \regular.

\begin{description}
    \item[Case I] $\nu((a_{1,1})^2) = \nu((a_{2,2})^2 ) > \nu(a_{1,2} \, a_{2,1}).$ Then
$$A^2 = \( \begin{matrix} (a_{1,1})^2   & a_{1,2} \, a_{1,1}^\nu  \\
a_{2,1} \, a_{1,1}^\nu & (a_{2,2})^2,
\end{matrix}\) ,$$
so the entries of $(a_{1,1}\um)A$ and $A^2$ are $\nu$-matched, and
  we see by iteration that $A^{2^u}$ is \regular \ for every
$u$, and thus every power of $A$ is \regular . \pSkip
    \item[Case II] $\nu((a_{2,2})^2) \le \nu((a_{1,1}) ^2) < \nu(a_{1,2} \, a_{2,1}).$ Then
$$A^2 = \( \begin{matrix} a_{1,2} \, a_{2,1}   & a_{1,2} \, a_{1,1}  \\
a_{2,1} \, a_{1,1}  & a_{1,2} \, a_{2,1}
\end{matrix}\) ,$$
(where the off-diagonal terms are made ghost if  $a_{1,1}^\nu =
 a_{2,2}^\nu $), which has the form of Case I; hence, every power of
$A^2$, and thus of $A$, is \regular . \pSkip

    \item[Case III] $\nu((a_{1,1})^2) > \nu((a_{2,2}) ^2) >
\nu(a_{1,2} \, a_{2,1}).$ Then
$$A^2 = \( \begin{matrix} (a_{1,1})^2   & a_{1,2} \, a_{1,1}  \\
a_{2,1} \, a_{1,1}  & (a_{2,2})^2
\end{matrix}\)  = (a_{1,1}\um)A',$$ where $A'$ differs
from $A$ only in the $(2,2)$-entry, whose $\nu$-value has been
reduced by a factor of $\frac{a_{2,2}}{a_{1,1}}.$ Taking a high
enough power of $A$ will reduce $(a_{2,2})^2$ until it is
dominated by 
$a_{1,2} \, a_{2,1}$, and thus yield a singular
matrix. Thus, some power of $A$ will always be singular, even
though $A^2$ need not be singular.
\end{description}
Summarizing, $A^2$ \regular \ implies every power of $A$ is
\regular\ except in Case III, which  for any~$k$ provides an
example where $A^k$ is \regular \ but $A^{k+1}$ is singular.
\end{example}

\subsection{The digraph of a supertropical matrix}

One major computational tool in tropical matrix theory is the
\textbf{weighted digraph} $\grph = (\mathcal V, \mathcal E)$ of an
$n\times n$ matrix $A = (a_{i,j})$, which is defined to have
vertex set $\mathcal V =\{ 1, \dots, n\}$, and an edge $(i,j)$
from $i$ to $j$ (of \textbf{weight} $a_{i,j}$) whenever $a_{i,j}
\ne \rzero$.

We use \cite{Gibbons85} as a general reference for graphs. We
always assume that $\mathcal V = \{ 1, \dots, n \}$, for
convenience of notation. The \textbf{out-degree}, $\odeg(i)$, of a
vertex $i$  is the number of edges emanating from $i$, and the
\textbf{in-degree}, $\ideg(j)$, is the number edges terminating at
$j$.  A \textbf{sink} is a vertex $j$ with $\odeg(j) = 0$, while a
\textbf{source} is a vertex $j$ with $\ideg(j) = 0$.

The \textbf{length} $\len{\pth}$ of a path $\pth$ is the number of
edges of the path. A path is \textbf{simple} if each vertex
appears only once. A \textbf{simple cycle} is a simple path for
which $\odeg(i) = \ideg(i) =1$ for every vertex $i$ of the path;
thus, the initial and terminal vertices are the same.  A simple
cycle of length $1$ is then a \textbf{loop}. A simple cycle
repeated several times is called a \textbf{cycle}; thus, for some
$m$, $\odeg(i) = \ideg(i) = m$ for every vertex $i$ of the cycle.

It turns out that the only edges of use to us are those that are
parts of cycles. Accordingly, we define the \textbf{reduced
digraph} $\grph _A$  of $A$ to be the graph obtained from the
weighted digraph by erasing all edges that are not parts of
cycles. Consequently, if there is a path from $i$ to $j$ in $\grph
_A$, there also is a path from $j$ to $i$. Hence, $\grph _A$ can
be written as a disjoint union of connected components, in each of
which there is a path between any two vertices.

The \textbf{weight} $w(\pth)$ of a path $\pth$ is defined to be
the tropical product of the weights of the edges comprising
$\pth$, counting multiplicity. The \textbf{average weight} of the
path $\pth$ is $\root \ell \of {w(\pth)}$,  where $\ell = \ell(p)$
is the \textbf{length} of the path, i.e., the number of edges in
the path. (As always, our product, being tropical, is really the
sum, so we indeed are taking the average.) We order the weights
according to their $\nu$-values. Then the $(i,j)$-entry of $A^k$,
where $A$ is a tangible matrix, corresponds to the highest weight
of all the paths of length $k$ from $i$ to $j$, and is a ghost
whenever two distinct paths of length $k$ have the same highest
weight.

We define a $k$-\textbf{multicycle} $\cyc$ in a digraph to be the
union of disjoint simple cycles, the sum of whose lengths is $k$;
its \textbf{weight} $w(C)$ is the product of the weights of the
component cycles. Thus, each $n$-multicycle passes through all the
vertices; $n$-multicycles are also known in the literature as
\textbf{cyclic covers}, or \textbf{saturated matchings}.

\begin{rem}\label{matchdet1}
Writing a permutation $\sig$ as a product $\sig _1\cdots \sig_t$
of disjoint cyclic permutations, we see that each permutation
 corresponds to an \nmulti.
Conversely, any \nmulti\ corresponds to a permutation, and their
highest weight in $\grph _A$ matches~$\Det{A}$.  In particular,
when $\Det{A}$ is tangible, there is a unique \nmulti\ having
highest weight.
\end{rem}

Let us review some well-known results about cycles and
multicycles.

\begin{remark}\label{rmk:graph} Given a
graph $\grph = (\mathcal V, \mathcal E)$ where $\ideg(i) \geq 1$
and $\odeg(i) \geq 1$ for each $i \in \mathcal V$, then $G$
contains a simple cycle. Indeed, otherwise $G$ must have a sink or
source, $i \in \mathcal V$, in contradiction to $\ideg(i) \geq 1$
and $\odeg(i) \geq 1$, respectively.
\end{remark}


 We also need a special case of the
celebrated theorem of Birkhoff and Von Neumann \cite{Birk}, which
states that every positive doubly stochastic $n \times n$ matrix
is a convex combination of at most $n^2$ cyclic covers; more
precisely, we quote the graph-theoretic version of {\it Hall's
marriage theorem}. Since Hall's theorem is formulated for
bipartite graphs, we note the following correspondence between
digraphs having $n$ vertices and undirected bipartite graphs
having $2n$ vertices.

\begin{rem}\label{bipart} Any digraph $\grph = ( \mathcal V, \mathcal E)$ gives rise to a bipartite graph $\widetilde \grph =  ( \widetilde{\mathcal V},\widetilde {\mathcal E})$
 whose vertex set is $\widetilde V = \tV \cup \tV'$, where $\tV'$ is
a disjoint copy of $\tV$,
 and such that any edge $(i,j) \in \mathcal E$ corresponds to an edge in~$\widetilde{ \mathcal E}$ from $i\in \mathcal V$ to $j \in \mathcal V'$.
 (Thus, the directed edges in $\grph$ correspond to undirected edges in $\widetilde \grph$.)
 \end{rem}

\begin{theorem} [\it \textbf{Hall's marriage theorem}] \label{HMT} Suppose
$\widetilde \grph = (\widetilde  {\mathcal V}, \widetilde
{\mathcal E})$ is an (undirected) bipartite graph, and for each
$j\in \widetilde{\mathcal V}$ define $$N(j) = \{ i\in
\widetilde{\mathcal V}:\text{ there is an edge in }\widetilde
{\mathcal E}\text{ connecting }i \text{ and } j\}.$$ For $\tS
\subset \widetilde{\mathcal V},$ define $N(\tS) = \cup \{ N(s): s
\in \tS \},$ and assume that $|N(\tS)| \geq |\tS|$ for every $\tS
\subseteq \widetilde{\mathcal V}.$ (Here $|\tS|$ denotes the order
of the set $\tS$.) Then, for each $k \le n,$ $\widetilde  \grph$
contains a set of edges
$$\{(\pi(1),1), \dots, (\pi(k),k)\}$$ for some $\pi \in S_n$.
(For $k=n,$ this is called a {\it matching}).\end{theorem}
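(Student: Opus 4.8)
The plan is to prove Hall's marriage theorem by induction on $k$, establishing the stronger statement: under the deficiency hypothesis $|N(\tS)| \ge |\tS|$ for all $\tS \subseteq \widetilde{\mathcal V}$, for every $k \le n$ there is a set of $k$ independent edges covering $k$ of the ``left'' vertices $\{1,\dots,n\} \subset \widetilde{\mathcal V}$. First I would handle the case $k=1$, which is immediate since $|N(\{1\})| \ge 1$ gives at least one edge at vertex $1$. For the inductive step I would assume a matching $\{(\pi(1),1),\dots,(\pi(k),k)\}$ of size $k < n$ exists and seek to enlarge it, using the standard \emph{augmenting path} argument: starting from the uncovered left vertex $k+1$, one builds an alternating tree of vertices reachable by paths that alternate between non-matching and matching edges.

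The key steps, in order, are: (1) set up the alternating-tree / augmenting-path machinery — let $T$ be the set of left vertices and $U$ the set of right vertices reachable from $k+1$ by alternating paths; (2) observe that if some vertex of $U$ is \emph{unsaturated} by the current matching, the alternating path to it is augmenting, and swapping matching/non-matching edges along it produces a matching of size $k+1$, completing the step; (3) argue that the alternative — every vertex of $U$ saturated — is impossible: in that case every matching edge out of a saturated $U$-vertex leads back into $T$, so the matching restricted to $U$ gives a bijection $U \to T \setminus \{k+1\}$, whence $|U| = |T| - 1$; but $N(T) \subseteq U$ by construction (every right-neighbor of a vertex in $T$ is reachable), so $|N(T)| \le |U| = |T| - 1 < |T|$, contradicting the Hall hypothesis applied to $\tS = T$. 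Hence case (2) always occurs and the induction goes through; taking $k = n$ gives the matching.

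The main obstacle — really the only nontrivial point — is step (3): correctly verifying that saturation of all of $U$ forces $N(T) \subseteq U$ with $|U| = |T|-1$, which requires care about which edges land in $T$ versus outside it, and the observation that no matching edge can leave $U$ except to a vertex already placed in $T$ (otherwise that vertex would have been added to $T$). One must also be mildly careful that the paper's formulation allows $N(j)$ for $j$ on either side and that the bipartition $\widetilde{\mathcal V} = \tV \cup \tV'$ is respected throughout; since all the alternating paths start at a left vertex, parity keeps left and right vertices properly separated, so this bookkeeping is routine once set up. Everything else is a standard finite combinatorial argument with no reliance on the supertropical structure.
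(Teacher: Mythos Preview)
Your augmenting-path argument is correct and is one of the standard proofs of Hall's theorem. Note, however, that the paper does not actually give its own proof of this statement: immediately after the theorem it simply writes ``A quick proof can be found in \cite[Theorem~2.1.2]{Diestel9} or \cite{PlM}.'' So there is no in-paper proof to compare against in detail.

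For what it is worth, the proof in Diestel (the first reference cited) is \emph{not} the augmenting-path argument you outline but rather the direct induction on the size of one part of the bipartition, splitting into two cases: either every nonempty proper subset $\tS$ satisfies $|N(\tS)|\ge|\tS|+1$, in which case one matches a single vertex arbitrarily and checks the Hall condition persists on the remainder; or some $\tS$ has $|N(\tS)|=|\tS|$, in which case one applies induction separately to $\tS$ (matched into $N(\tS)$) and to the complement. Your approach instead builds the matching one edge at a time and uses the Hall condition only to rule out the ``all of $U$ saturated'' dead end; this has the advantage of being constructive (it is essentially the Hungarian algorithm) and of proving the intermediate $k<n$ statements directly, which is exactly what the paper's formulation asks for. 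Either route is perfectly adequate here, since the paper only needs the conclusion as a black box for Lemma~\ref{graph1}.
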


  A quick proof can be
found in \cite[Theorem~2.1.2]{Diestel9} or \cite{PlM}.
 This hypothesis  provides the next lemma, motivated by an argument founded in \cite{PlM2}:

\begin{lemma}\label{graph1} Assume that $\grph = (\mathcal V, \mathcal E)$ is a digraph, possibly with
multiple edges. Then $\grph$ contains an \nmulti, under any of the
following conditions (for any $k \ge 1$ in (i) and $k>1$ in the
other parts):
\begin{enumerate} \eroman
    \item  $\ideg(j)=\odeg(i) =k $ for all $i,j$. \pSkip

\item   $\ideg(j)= k $ for   all vertices $j$ except one (at most)
with in-degree $k+1$ and one with in-degree $k-1$, and
 $ \odeg(i) = k $ for   all vertices $i$. \pSkip

\item   $\odeg(i)= k $ for   all vertices $i$ except
one (at most) with out-degree $k+1$ and one with out-degree $k-1$,
and
 $ \ideg(j) = k $ for   all vertices $j$. \pSkip

\item    $\odeg(i)= k $ for   all vertices $i$ except
one (at most) with out-degree $k-1$, and
 $\ideg(j)= k $ for   all vertices $j$ except one (at most)
with in-degree $k-1$.
\end{enumerate}
\end{lemma}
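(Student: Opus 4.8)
The plan is to reduce all four cases to Hall's marriage theorem (Theorem~\ref{HMT}) applied to the associated bipartite graph $\widetilde\grph$ from Remark~\ref{bipart}, and then to translate an $n$-matching in $\widetilde\grph$ back into an $n$-multicycle in $\grph$ via the permutation-to-multicycle correspondence of Remark~\ref{matchdet1}. Recall that an edge set $\{(\pi(1),1),\dots,(\pi(n),n)\}$ in $\widetilde\grph$ is exactly the data of a permutation $\pi\in S_n$ all of whose edges $(\pi(i),i)$ lie in $\mathcal E$; writing $\pi$ as a product of disjoint cycles exhibits the corresponding $n$-multicycle. So in every case it suffices to verify the Hall condition $|N(\tS)|\ge|\tS|$ for all $\tS\subseteq\widetilde{\mathcal V}$, where here $N(j)$ for $j$ in the ``column copy'' $\mathcal V'$ is the set of $i\in\mathcal V$ with $(i,j)\in\mathcal E$, and symmetrically for $j\in\mathcal V$.

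For case (i), I would argue by a counting/double-counting estimate: if $\tS$ lies in the column side, then the number of edges incident to $\tS$ is exactly $k|\tS|$ (each column vertex has in-degree $k$), while the number of edges incident to $N(\tS)$ is $k|N(\tS)|$ (each row vertex has out-degree $k$), and every edge incident to $\tS$ is incident to $N(\tS)$, giving $k|\tS|\le k|N(\tS)|$, hence $|\tS|\le|N(\tS)|$; a mixed $\tS$ splits into its two sides and one handles each side separately (the Hall condition for a bipartite graph only needs to be checked on subsets of one part at a time, since $\widetilde\grph$ is bipartite). Cases (ii)--(iv) are the same double-counting with a bookkeeping correction of $\pm 1$ for the at-most-two exceptional vertices: the edge count from $\tS$ is at least $k|\tS|-1$ and the edge count absorbed by $N(\tS)$ is at most $k|N(\tS)|+1$, so one gets $k|\tS|-1\le k|N(\tS)|+1$, i.e. $|\tS|\le|N(\tS)|+\tfrac{2}{k}$; since $k>1$ in these parts and the quantities are integers, this still forces $|\tS|\le|N(\tS)|$, except that I must separately rule out the borderline configuration. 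The extremal case to watch is when the $+1$ exception and the $-1$ exception both fall inside the same subset in a way that could in principle make $|N(\tS)|=|\tS|-1$; here one uses that the two exceptional vertices in (iv) are on opposite sides (one is a column with in-degree $k-1$, the other a row with out-degree $k-1$), or in (ii)/(iii) that the $+1$ and $-1$ exceptions are both on the in-degree (resp. out-degree) side so their defects cancel in the relevant count, so the genuine slack never reaches $1$.

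I expect the main obstacle to be precisely this borderline bookkeeping in parts (ii)--(iv): making the double-counting inequality tight enough that the $\pm1$ corrections provably cannot conspire to violate Hall. The clean way to dispose of it is to reduce (ii)--(iv) to (i) by a small graph surgery rather than redoing the counting: in case (ii), add one auxiliary edge from the in-degree-$(k-1)$ vertex's ``missing'' slot — more precisely, add an edge joining the deficient column vertex to the deficient row vertex — to obtain a graph with all in- and out-degrees equal to $k$ (possibly with a multiple edge, which is permitted), apply (i) to get an $n$-multicycle there, and observe that if that multicycle uses the added edge we can reroute; alternatively, and more robustly, add the single edge, apply (i) and Remark~\ref{matchdet1} to get an $n$-multicycle $C$, and if $C$ uses the new edge, note that deleting it from the degree sequence leaves us in case (iv) on a strictly smaller configuration, allowing an induction on the number of exceptional vertices. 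Cases (iii) and (iv) are handled by the same surgery (case (iii) is case (ii) transposed, which is legitimate since reversing all edge directions interchanges in- and out-degrees and carries $n$-multicycles to $n$-multicycles; case (iv) needs one added edge from the deficient row to the deficient column). This keeps the only real combinatorial content in case (i), where the double-counting argument is clean and leaves no borderline.
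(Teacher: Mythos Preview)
Your overall plan---pass to the bipartite graph $\widetilde\grph$ of Remark~\ref{bipart}, verify Hall's condition by double counting, and read the resulting matching as an $n$-multicycle---is exactly what the paper does. For part~(i) your argument is the paper's argument verbatim.

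For parts (ii)--(iv), however, your writeup wobbles. Your first pass gives the inequality $k|\tS|-1 \le k|N(\tS)|+1$, i.e.\ slack $2/k$, which would indeed fail at $k=2$. The fix is not a separate ``borderline'' analysis or a surgery argument: it is simply that in each of (ii), (iii), (iv) the defect appears on \emph{only one} side of the double-count once you fix whether $\tS\subseteq\mathcal V$ or $\tS\subseteq\mathcal V'$. For instance in~(ii) with $\tS\subseteq\mathcal V$ all out-degrees are exactly $k$, so the left side is $k|\tS|$ on the nose, while the right side is at most $k|N(\tS)|+1$; with $\tS\subseteq\mathcal V'$ the left side is at least $k|\tS|-1$ and the right side is exactly $k|N(\tS)|$. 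Either way the slack is $1/k<1$, and integrality finishes it. You actually say this in your parenthetical (``the $+1$ and $-1$ exceptions are both on the in-degree side so their defects cancel''), so you already have the correct argument---just commit to it and drop the $2/k$ version.

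The graph-surgery alternative you propose should be discarded: as written it is incorrect. In case~(ii) all out-degrees equal $k$, so there is no ``deficient row vertex'' to join to the deficient column vertex; the two exceptional vertices are both on the in-degree side (one $k{+}1$, one $k{-}1$), so adding a single edge cannot restore regularity on both of them. A correct surgery would have to move an edge (delete one into the surplus vertex, add one into the deficient vertex), and then you would genuinely need to reroute if the new edge is used---more work than the one-line counting argument. The paper does not take this detour.
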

\begin{proof} We form a matrix $B$ whose $(i,j)$-entry is the number of (directed) edges from $i$ to $j$ in $\grph$,
and a new bipartite graph $\widetilde \grph$ obtained from the
graph $\grph$ as in Remark~\ref{bipart}. Thus, any nonzero entry
$b_{i,j}\in B$ corresponds to  $b_{i,j}$ edges from $i\in \mathcal
V$ to $j \in \mathcal V'$.

Note that any matching in $\widetilde \grph$ corresponds to   an
\nmulti\ of $\grph$. Thus, we  need to verify the hypothesis of
Hall's marriage theorem on $\widetilde{ \grph}$. For any $\tS
\subseteq \widetilde{\mathcal V} =  \mathcal V \cup \mathcal V'$,
write $\tU = N(\tS).$ We need to show that $|\tU| \ge |\tS|$.
First of all, since by definition the neighbors of $\mathcal V$
are in $\mathcal V'$ and visa versa, it suffices to assume $\tS
\subseteq \mathcal V $ or $\tS \subseteq \mathcal V'$.

\begin{enumerate} \eroman
    \item

By symmetry, we assume that $\tS \subseteq \mathcal V $. Then $\tU
\subseteq \mathcal V' $ and
\begin{equation}\label{momentoftruth}
k |N(\tS)| = k|\tU| = \sum _{j \in \tU} \ideg(j) = \sum _{j \in
\tU}\sum _{i \in N(j)} b_{i,j} \ge \sum _{j \in \tU}\sum _{i \in
\tS} b_{i,j} = \sum _{i \in \tS}\odeg(i) = k| \tS|,
\end{equation}
implying $|N(\tS)| \ge | \tS|,$ as desired. \pSkip

\item We modify the argument of (i), noting that if $a$ and $b$ are
integers with $a> b-1$ then $a \ge b.$ First assume that $\mathcal
S \subseteq \mathcal V '$. For any subset $\tS$ of $\mathcal V'$,
the number~$t$ of edges (counting multiplicities) terminating in a
vertex in $\tS$ is at least $(|\tS|-1)k +1.$ But since any such
edge starts at a vertex in $N(\tS)$, we see that
  $t \le |N(\tS)|k$, so we conclude that $|N(\tS)|>
|\tS|-1,$ and thus $|N(\tS)|\ge |\tS|,$ as desired.

Now assume $\tS \subseteq \mathcal V $. For any subset $\mathcal
S$ of $\mathcal V$, the number~$t$ of edges (counting
multiplicities) starting in a vertex in $\tS$ is $|\tS|k .$ But
since any such edge starts at a vertex in $N(\tS)$, we see that
  $t \le |N(\tS)|k +1$, so  again we conclude that $|N(\tS)|>
|\tS|-1,$ and thus $|N(\tS)|\ge |\tS|,$ as desired. \pSkip

\item  As in (ii). \pSkip

\item  Again the analogous argument holds. By symmetry, we assume
that $\tS \subseteq \mathcal V $. Now Equation~
\eqref{momentoftruth} becomes
\begin{equation}\label{realmomentoftruth} k |N(\tS)| =
k|\tU| \ge \sum _{j \in \tU} \ideg(j) = \sum _{j \in \tU}\sum _{i
\in N(j)} b_{i,j} \ge \sum _{j \in \tU}\sum _{i \in \tS} b_{i,j} =
\sum _{i \in \tS}\odeg(i) = k| \tS|-1,
\end{equation}
so again $|\tU| \ge |\tS|.$

\end{enumerate}
\end{proof}

\begin{prop}\label{multicyc1} Assume that $\grph = (\mathcal V, \mathcal E)$ where each vertex $i \in \mathcal V$
has $\ideg(i) = \odeg(i) = k $. Then $\grph$ is a union of $k$
distinct \nmulti s.  \end{prop}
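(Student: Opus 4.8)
The plan is to prove Proposition~\ref{multicyc1} by induction on $k$, peeling off one $n$-multicycle at a time. First I would invoke Lemma~\ref{graph1}(i): since every vertex of $\grph$ satisfies $\ideg(i)=\odeg(i)=k$, the graph contains an $n$-multicycle $\cyc_1$, that is, a collection of disjoint simple cycles passing through all $n$ vertices. I would then form $\grph' = \grph \setminus \cyc_1$, the digraph obtained by deleting the $k$ edges of $\cyc_1$ (one edge leaving and one edge entering each vertex). Because an $n$-multicycle uses exactly one out-edge and one in-edge at each vertex, every vertex of $\grph'$ now satisfies $\ideg(i)=\odeg(i)=k-1$, so by induction $\grph'$ is a union of $k-1$ distinct $n$-multicycles $\cyc_2,\dots,\cyc_k$; adjoining $\cyc_1$ gives the desired decomposition, and the base case $k=1$ is just the single application of Lemma~\ref{graph1}(i) (or Remark~\ref{rmk:graph} together with the observation that removing a simple cycle preserves the balanced-degree hypothesis).

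The one genuine subtlety — and I expect this to be the main obstacle — is the word \emph{distinct} in the statement. The naive induction produces $n$-multicycles $\cyc_1,\dots,\cyc_k$ that are edge-disjoint as submultisets of the edge multiset of $\grph$, but since $\grph$ is allowed to have multiple edges (as in Lemma~\ref{graph1}), two of these $n$-multicycles could in principle consist of ``the same'' cycles traversing parallel edges. I would resolve this by being careful about what ``distinct'' means here: the $n$-multicycles are distinguished as sub-\emph{multigraphs} (i.e., as edge-disjoint subsets of the edge multiset), which is exactly what the edge-removal procedure guarantees, since at each stage $\cyc_{j}$ is removed from the edge multiset before $\cyc_{j+1}$ is extracted. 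Alternatively, if one wants them distinct as abstract multicycles, one notes this is the content one needs for the determinant application and that the multiset-disjointness is the correct and sufficient reading; I would state this explicitly to avoid ambiguity.

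Thus the key steps, in order, are: (1) apply Lemma~\ref{graph1}(i) to extract one $n$-multicycle $\cyc_1$; (2) check that deletion of its edge multiset lowers every in-degree and out-degree by exactly $1$, preserving the hypothesis with $k$ replaced by $k-1$; (3) apply the inductive hypothesis to $\grph'$; (4) observe that the $n$-multicycles so produced are pairwise edge-disjoint by construction, hence distinct. The verification in step (2) is the routine but essential bookkeeping: each vertex $i$ lies on exactly one of the disjoint simple cycles comprising $\cyc_1$, contributing one edge to $\odeg_{\grph}(i)$ and one to $\ideg_{\grph}(i)$, so these counts drop from $k$ to $k-1$ uniformly. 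No appeal to Hall's theorem beyond Lemma~\ref{graph1} is needed, and the argument terminates after $k$ rounds with the edge multiset of $\grph$ exhausted (the total number of edges is $nk$, and each $n$-multicycle accounts for $n$ of them).
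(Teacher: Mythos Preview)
Your proof is correct and follows exactly the paper's own approach: invoke Lemma~\ref{graph1}(i) to extract an $n$-multicycle, remove it to drop all in- and out-degrees by one, and induct on $k$. (One small slip: an $n$-multicycle has $n$ edges, not $k$; you say this correctly later when counting $nk$ edges total.)
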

\begin{proof} By the lemma, we have an \nmulti\, which we may remove from  $\grph
$; we thereby obtain a graph where each vertex $i \in \mathcal V$
has $\ideg(i) = \odeg(i) = k-1$, and continue by induction on $k$.

\end{proof}

\section{Quasi-invertible matrices and the adjoint}

\begin{defn} A \textbf{\quasi-zero} matrix
$\zm_\tG$ is a matrix equal to $\rzero$ on the diagonal, and whose
off-diagonal entries are ghosts or $\rzero$. (Despite the
notation, the \quasi-zero matrix $\zm_\tG$ is not unique, since
the $\nu$-values of the ghost entries may vary.) A
\textbf{\quasi-identity} matrix $\um_\tG$ is a \regular ,
multiplicatively idempotent matrix equal to $\um + \zm_\tG$, where
$\zm_\tG$ is a \quasi-zero matrix.


A matrix $B$ is a \textbf{\quasi-inverse} for $A$ if $AB$ and $BA$
are \quasi-identities. The matrix $A$ is
\textbf{\quasi-invertible} when $A$ has a \quasi-inverse.
\end{defn}

Thus, for any matrix $A$ and any \quasi-identity, $I_\tG,$ we have
$AI_\tG = A + A_\tG,$ where $A_\tG \in M_n(\tGz)$. Also, $\Det{
I_\tG } = \rone$ by the \regular ity of $I_\tG$. Note that the
identity matrix $I$ is itself a \quasi-identity, and also is a
\quasi-inverse for any \quasi-identity.

\begin{remark} $ $
\begin{enumerate} \eroman
    \item
 By definition, each \quasi-identity ${{I_\tG}}$
is also \quasi-invertible, since ${{I_\tG}}$ is a \quasi-inverse
of itself. Recall from semigroup theory that there is a one-to-one
 correspondence between (multiplicative) idempotent matrices in  $M_n(R)$ and
 maximal (multiplicative) subgroups of $M_n(R);$ the  idempotent matrix $I_\tG \in M_n(R)$  is the
 identity element of a unique maximal subgroup of $M_n(R)$, namely the group of units of
 $I_\tG M_n(R)I_\tG$;
 cf.~\cite{Lallement}.  Note that $M_n(R)$ has many other
idempotents, \regular \ and singular. \pSkip

\pSkip \item  Any \quasi-identity matrix $I_\tG = (a_{i,j})$ must
satisfy $a_{i,j} a_{j,i} < \rone^\nu$ for $i\ne j$ and
$a_{i,j}^\nu a_{j,k}^\nu \le a_{i,k}^\nu$ for $i\ne k,$ because
$I_\tG $ is multiplicatively idempotent.

\pSkip \item   A slightly weaker notion, called
\textbf{pseudo-identity}, is given in
\cite{IzhakianRowen2009TropicalRank}. Note that a pseudo-identity
need not be multiplicatively idempotent, as seen by considering
upper triangular $3\times 3$ matrices with ghost entries on the
upper diagonal (cf.~Example~\ref{uppertr} below); these do not
necessarily satisfy the criterion $a_{i,j}^\nu a_{j,k}^\nu \le
a_{i,k}^\nu$ of (ii).
\end{enumerate}

\end{remark}

There is another formula to help us out.
\begin{defn} The $(i,j)$-\textbf{minor} $A'_{i,j}$ of a matrix $A =(a_{i,j})$ is obtained by
deleting the $i$ row and $j$~column of~$A$. The \textbf{adjoint}
matrix $\adj{A}$ of $A$ is defined as the transpose of the matrix
$(a'_{i,j}),$ where $a'_{i,j}= \Det{A'_{i,j}}$.
\end{defn}

\begin{rem}\label{rmk:adj0} By definition, $a'_{i,j}$  can be computed
as \begin{equation}\label{adjform}\sum _{\pi \in S_n, \ \pi(i) =
j} a_{1,\pi(1)}a_{ 2,\pi(2)}\cdots
a_{{i-1},\pi({i-1})}a_{{i+1},\pi({i+1})}\cdots a_{{n},\pi({n})}.
\end{equation}\end{rem}
\newpage

\begin{rem}\label{rmk:adj}  $ $
\begin{enumerate} \eroman
    \item

Suppose $A = (a_{i,j})$. An easy calculation using Formula
\Ref{det2} yields
\begin{equation}\label{det3}
\Det{A} = \sum_{j=1}^n a_{i,j} \, a'_{i,j}, \quad \forall
i.\end{equation} Consequently, $(a_{i,j}  \, {a'}_{i,j})^\nu \le
\Det{A}^\nu$ for each $i,j$.\pSkip

\pSkip
\item  If we take $k\ne i,$ then replacing the $i$ row by the $k$
row in $A$ yields a matrix with two identical rows;  thus, its
\tdet \ is a ghost, and we thereby obtain
\begin{equation}\label{det3.2}
 \sum_{j=1}^n a_{i,j} \, a'_{k,j} \in \tGz, \qquad \forall
k \ne i;\end{equation}
Likewise
\begin{equation*}\label{det3.21}
 \sum_{j=1}^n a_{j,i} \, a'_{j,k} \in \tGz, \qquad \forall
k \ne i.\end{equation*}\pSkip

 More generally, by Remark~\ref{detrep}, if  $b'_{i,j}\in R$ with the same
$\nu$-value as $a'_{i,j},$ then
\begin{equation*}\label{det3.20} \sum_{j=1}^n a_{i,j} \,
b'_{k,j} \in \tGz, \qquad \forall k \ne i\end{equation*} (since
this is the tropical determinant of a matrix having two rows with
the same $\nu$-values); likewise,
\begin{equation}\label{det3.23}
 \sum_{j=1}^n a_{j,i} \, b'_{j,k} \in \tGz, \qquad \forall
k \ne i.\end{equation}

This observation is significant since it is often useful to take
$b'_{i,j}\in \tT$. The same argument shows that if $b_{i,j}\in R$
with the same $\nu$-value as $a_{i,j},$ then
\begin{equation*}\label{det3.200} \sum_{j=1}^n b_{i,j} \,
a'_{k,j} \in \tGz, \qquad \forall k \ne i.\end{equation*}
\pSkip

\end{enumerate}
\end{rem}

\begin{defn}\label{quas} For $\Det{ A}$ is invertible, define $$I_A = A \frac{\adj{A}}{\Det{A}},\qquad  I'_A = \frac{\adj{A}}{\Det{A}}A.$$
\end{defn}

Putting together (i) and (ii) of Remark~\ref{rmk:adj} shows that
the matrices $ \um _ A$ and $\um '_ A$ are the identity on the
diagonal and ghost off the diagonal.

\begin{example}\label{2by2} Let us compute $\adj{AB}$, for any $2\times
2$ matrices $$A = \( \begin{matrix} a_{1,1} & a_{1,2} \\
a_{2,1} & a_{2,2}
\end{matrix}\) ,\qquad B = \( \begin{matrix} b_{1,1} & b_{1,2} \\
b_{2,1} & b_{2,2}
\end{matrix}\) ,$$ and compare it to $\adj{B}\adj{A}$.
First, $\adj{A} = \( \begin{matrix} a_{2,2} & a_{1,2} \\
a_{2,1} & a_{1,1}
\end{matrix}\) ,$ $\adj{B} = \( \begin{matrix} b_{2,2} & b_{1,2} \\
b_{2,1} & b_{1,1}
\end{matrix}\) ,$
so $$\adj{B}\adj{A} = \( \begin{matrix} b_{2,2}a_{2,2} + b_{1,2}a_{2,1} & b_{2,2}a_{1,2}+
 b_{1,2}a_{1,1} \\
b_{2,1}a_{2,2}+ b_{1,1}a_{2,1} &  b_{2,1}a_{1,2}+b_{1,1}a_{1,1}
\end{matrix}\) ,$$
which equals $\adj{AB}$

However, for larger $n$, this fails; for example, for the $3\times
3$ matrix $$A = \( \begin{matrix} \one & \one & \one  \\
 \one & \zero & \zero   \\ \one & \zero & \zero
\end{matrix}\), \quad  \text{we have} \quad A^2 = \( \begin{matrix} \one^\nu & \one & \one  \\
 \one & \one & \one   \\ \one & \one & \one
\end{matrix}\) \ \text{and} \   \adj{A^2} = (\one^\nu),$$
 whereas
$$\adj{A} = \( \begin{matrix} \zero & \zero & \zero  \\
 \zero & \one & \one   \\ \zero & \one & \one
\end{matrix}\) \quad  \text{and} \quad \adj{A}^2 = \( \begin{matrix} \zero & \zero & \zero  \\
 \zero & \one^\nu & \one^\nu   \\ \zero & \one^\nu & \one^\nu
\end{matrix}\).$$

%
\end{example}

One does have the following fact, which illustrates the subtleties
of the supertropical structure:

\begin{prop}  $\adj{AB} =\adj{B}\adj{A}+\ghost.$ \end{prop}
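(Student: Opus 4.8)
The plan is to prove the identity entry by entry, via a supertropical version of the Cauchy--Binet formula applied to the minors of $AB$. Fix indices $i,j$. By the definition of the adjoint, $(\adj{AB})_{i,j}=\Det{(AB)'_{j,i}}$, the tropical determinant of the $(n-1)\times(n-1)$ matrix obtained from $AB$ by deleting row $j$ and column $i$. The first step is to observe that this minor factors as a product of rectangular matrices: if $\hat{A}^{(j)}$ denotes the $(n-1)\times n$ matrix formed by the rows of $A$ other than the $j$-th, and $\hat{B}^{(i)}$ the $n\times(n-1)$ matrix formed by the columns of $B$ other than the $i$-th, then $(AB)'_{j,i}=\hat{A}^{(j)}\,\hat{B}^{(i)}$, since deleting row $j$ only suppresses rows of $A$ and deleting column $i$ only suppresses columns of $B$, while the contracted index still runs over all of $\{1,\dots,n\}$.

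The second step is to expand $\Det{\hat{A}^{(j)}\hat{B}^{(i)}}$ directly from Formula~\eqref{det2}. Index the rows of $\hat{A}^{(j)}$ (and of the product) by $R_j=\{1,\dots,n\}\setminus\{j\}$ and the columns of $\hat{B}^{(i)}$ by $C_i=\{1,\dots,n\}\setminus\{i\}$. Distributing in the products of the entries $\sum_k a_{r,k}b_{k,c}$ of $\hat{A}^{(j)}\hat{B}^{(i)}$, one gets
$$\Det{\hat{A}^{(j)}\hat{B}^{(i)}}\;=\;\sum_{(k_r)}\left(\prod_{r\in R_j}a_{r,k_r}\right)\left(\sum_{\sigma}\prod_{r\in R_j}b_{k_r,\sigma(r)}\right),$$
the outer sum over all families $(k_r)_{r\in R_j}$ of elements of $\{1,\dots,n\}$, the inner sum over all bijections $\sigma\colon R_j\to C_i$. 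The inner sum is the tropical determinant of the submatrix of $B$ on rows $(k_r)_{r\in R_j}$ and columns $C_i$. If the $k_r$ are not pairwise distinct this submatrix has two equal rows, so the inner sum --- hence the whole summand --- is a ghost by Remark~\ref{detrep}. If the $k_r$ are pairwise distinct, their set is $\{1,\dots,n\}\setminus\{l\}$ for a unique $l$, the inner sum equals $\Det{B'_{l,i}}$ (permuting rows does not alter the sum in Formula~\eqref{det2}), and grouping the remaining summands according to the bijections $\tau\colon R_j\to\{1,\dots,n\}\setminus\{l\}$ given by $\tau(r)=k_r$ produces $\left(\sum_{\tau}\prod_{r\in R_j}a_{r,\tau(r)}\right)\Det{B'_{l,i}}=\Det{A'_{j,l}}\,\Det{B'_{l,i}}$.

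Assembling these contributions gives the key identity
$$\Det{(AB)'_{j,i}}\;=\;\sum_{l=1}^{n}\Det{A'_{j,l}}\,\Det{B'_{l,i}}\;+\;\ghost\;=\;\sum_{l=1}^{n}a'_{j,l}\,b'_{l,i}\;+\;\ghost,$$
where the remainder is a genuine ghost, being a sum of ghost summands. Since $a'_{j,l}=(\adj{A})_{l,j}$ and $b'_{l,i}=(\adj{B})_{i,l}$, and $R$ is commutative, the right-hand side without its ghost part is exactly $\sum_{l}(\adj{B})_{i,l}(\adj{A})_{l,j}=(\adj{B}\adj{A})_{i,j}$. Hence $(\adj{AB})_{i,j}=(\adj{B}\adj{A})_{i,j}+\ghost$ for every $i,j$, which is the claim. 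The main obstacle is the bookkeeping in the second step: one must verify carefully that a repeated contracted index genuinely yields a ghost summand (via Remark~\ref{detrep}), that permuting rows of a matrix leaves its tropical determinant unchanged, and that the leftover terms really collect into a single ghost element --- so that the conclusion is an honest ``$+\,\ghost$'' rather than merely the $\nu$-domination $\Det{(AB)'_{j,i}}^{\nu}\ge(\adj{B}\adj{A})_{i,j}^{\nu}$.
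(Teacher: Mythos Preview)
Your proof is correct and follows essentially the same strategy as the paper's: expand the $(j,i)$-minor of $AB$, separate the terms according to whether the contracted indices $(k_r)$ are pairwise distinct, recognize the distinct-index contributions as $\sum_l a'_{j,l}b'_{l,i}=(\adj{B}\adj{A})_{i,j}$, and show the repeated-index contributions are ghost. The only cosmetic difference is packaging: you organize the expansion as a Cauchy--Binet factorization $(AB)'_{j,i}=\hat A^{(j)}\hat B^{(i)}$ and invoke the ``repeated rows $\Rightarrow$ ghost determinant'' property (Remark~\ref{detrep}) as a black box, whereas the paper carries out that same transposition-pairing argument by hand on the individual strings.
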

\begin{proof}
 Writing $AB = (c_{i,j}),$ we see that $\adj{AB} =
(c'_{j,i})$ whereas the $(i,j)$-entry of $\adj{B}\adj{A}$ is $\sum
_{k=1}^n b'_{k,i}a'_{j,k}$. Since $a'_{j,k}b'_{k,i}$ appears in
$c'_{j,i}$, we need only check that the other terms in $c'_{j,i}$
occur in matching pairs that thus provide ghosts. These are sums
of products the form $$d_{k_1,\pi(k_1)}d_{ k_2,\pi(k_2)}\cdots
d_{k_{n-1},\pi(k_{n-1})},$$ where $k_t \ne j,$ $\pi(k_{t}) \ne i$
for all $1 \le t\le n-1,$ and
$$d_{k_t,\pi(k_t)} =  a_{k_t,\ell}b_{\ell,\pi(k_t)}.$$
If the $\ell$ do not repeat, we have a term from $\adj{B}\adj{A}$.
But if some $\ell$ repeats, i.e., if we have $$d_{k_t,\pi(k_t)} =
a_{k_t,\ell}b_{\ell,\pi(k_t)}, \qquad d_{k_u,\pi(k_u)} =
a_{k_u,\ell}b_{\ell, \pi(k_u)},$$ then in computing $c'_{j,i}$ we
also have a contribution from $\sg$ where $\sg(k_t) = \pi(k_u)$
and $\sg(k_u) = \pi(k_t)$ (and otherwise $\sig = \pi$)), where we
get
$$a_{k_t,\ell}b_{\ell,\sg(k_t)}a_{k_u,\ell}b_{\ell, \sg(k_u)} =
a_{k_t,\ell}b_{\ell,\pi(k_u)}a_{k_u,\ell}b_{\ell, \pi(k_t)} =
a_{k_t,\ell}b_{\ell,\pi(k_t)}a_{k_u,\ell}b_{\ell, \pi(k_u)},$$ as
desired. \end{proof}

We  show below that the matrices $\um _ A$ and $\um' _ A$ of
Definition~\ref{quas} are quasi-identities. This requires some
preparation. Our main technique of proof is to define a
\textbf{string}  (from the matrix $A$) to be a product $\str =
a_{i_1, j_1}\cdots a_{i_k, j_k}$ of entries from $A$ and, given
such a string, to define the \textbf{digraph $G_{\str}$ of the
string} to be the graph whose edges are  $(i_1, j_1), \dots, (i_k,
j_k),$ counting multiplicities. For example, the digraph $G_\str$
of the string
$$\str = a_{1,2}a_{2,3}a_{3,1}a_{1,1}a_{2,3}a_{3,2}$$ has edge set
 $\{ (1,1),\ (1,2),\ (2,3)$ (multiplicity 2), $(3,1), \
(3,2)\}$.

\begin{thm}\label{adjeq} $ $ \eroman
\begin{enumerate}
    \item $\Det{ A \adj{A}}  = \Det{A} ^n.$ \pSkip
    \item $\Det{ \adj{A}}  = \Det{A}^{n-1}.$
\end{enumerate}
\end{thm}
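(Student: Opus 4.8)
The plan is to build everything on two facts about $A\adj{A}$ that are already available from Remark~\ref{rmk:adj}: by \eqref{det3} the $(i,i)$-entry of $A\adj{A}$ equals $\Det{A}$, and by \eqref{det3.2} every off-diagonal entry of $A\adj{A}$ lies in $\tGz$. Expanding $\Det{A\adj{A}}$ by Formula~\eqref{det2}, the identity permutation contributes the product of the diagonal entries, namely $\Det{A}^n$, while any other permutation moves at least two indices and so contributes a product with at least one ghost (off-diagonal) factor. Hence $\Det{A\adj{A}}=\Det{A}^n+g$ for some ghost $g$, and part~(1) reduces to the estimate $g^\nu\le\Det{A}^{n\nu}$, which must be made \emph{strict} when $\Det{A}$ is tangible.

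To get the estimate I would expand each ghost summand into strings: a typical term is $\str=\bigl(\prod_i a_{i,j_i}\bigr)\bigl(\prod_i\str_i\bigr)$, where $\str_i$ is a term of $a'_{\pi(i),j_i}=\Det{A'_{\pi(i),j_i}}$, so $\str$ is a string of $n^2$ entries of $A$. A routine degree count—using that $\pi$ is a permutation and that each minor-term is a bijection off one row and one column—shows that in the digraph $G_{\str}$ every vertex has in-degree and out-degree exactly $n$. By Proposition~\ref{multicyc1}, $G_{\str}$ is then a union of $n$ \nmulti s, each of $\nu$-weight at most $\Det{A}^\nu$ by Remark~\ref{matchdet1}, whence $\str^\nu\le\Det{A}^{n\nu}$, and so $g^\nu\le\Det{A}^{n\nu}$. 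When $\Det{A}\in\tGz$ this already gives $\Det{A\adj{A}}=\Det{A}^n+g=\Det{A}^n$.

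The delicate point, which I expect to be the main obstacle, is the tangible case, where I must exclude $g^\nu=\Det{A}^{n\nu}$. Here $A$ is nonsingular, so there is a \emph{unique} permutation $\sigma$ attaining $\Det{A}$, and $\str^\nu=\Det{A}^{n\nu}$ would force every one of the $n$ \nmulti s in the decomposition of $G_{\str}$ to be the (unique) optimal one $C_\sigma$, i.e.\ $G_{\str}$ to be $n$ copies of $C_\sigma$. But then each edge $(i,j_i)$ must lie on $C_\sigma$, so $j_i=\sigma(i)$, and each $\str_i$ uses only edges of $C_\sigma$; since $\str_i$ ranges over a bijection from $\{1,\dots,n\}\setminus\{\pi(i)\}$ onto $\{1,\dots,n\}\setminus\{\sigma(i)\}$, this is possible only when $\sigma(\pi(i))=\sigma(i)$, i.e.\ $\pi(i)=i$, for every $i$—contradicting $\pi\ne\operatorname{id}$. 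Hence $g^\nu<\Det{A}^{n\nu}$, and $\Det{A\adj{A}}=\Det{A}^n$, proving (1).

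For (2) I would run the same device on $\Det{\adj{A}}=\sum_\pi\prod_i a'_{\pi(i),i}$: each term now yields strings of $n(n-1)$ entries with all in- and out-degrees equal to $n-1$, which by Proposition~\ref{multicyc1} split into $n-1$ \nmulti s; this gives $\Det{\adj{A}}^\nu\le\Det{A}^{(n-1)\nu}$ (and, when $\Det{A}=\rzero$, it forces every \nmulti\ of $A$ to have weight $\rzero$, so $\Det{\adj{A}}=\rzero=\Det{A}^{n-1}$). For the reverse inequality, fix $\sigma$ attaining $\Det{A}$; the summand of $\Det{\adj{A}}$ coming from $\sigma^{-1}$ equals $\prod_k a'_{k,\sigma(k)}$ (reindex by $k=\sigma^{-1}(i)$), and since each $a'_{k,\sigma(k)}=\Det{A'_{k,\sigma(k)}}$ dominates the sub-product $\prod_{r\ne k}a_{r,\sigma(r)}$, multiplying over $k$ shows this summand has $\nu$-value at least $\bigl(\prod_r a_{r,\sigma(r)}\bigr)^{(n-1)\nu}=\Det{A}^{(n-1)\nu}$. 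Thus $\Det{\adj{A}}^\nu=\Det{A}^{(n-1)\nu}$. It then remains to promote this to an equality of elements: if $\Det{A}\in\tT$, then by (1) $\Det{A\adj{A}}=\Det{A}^n$ is tangible, so Theorem~\ref{thm:detOfProd} gives $\Det{A}\,\Det{\adj{A}}=\Det{A\adj{A}}=\Det{A}^n$ with $\Det{\adj{A}}$ necessarily tangible, and cancelling $\Det{A}$ in the monoid $\tT$ yields $\Det{\adj{A}}=\Det{A}^{n-1}$; if $\Det{A}$ is a nonzero ghost, then Remark~\ref{rmk:zeroDet} (two permutations attaining $\Det{A}$, or a ghost entry along the unique $\sigma$) shows $\adj{A}$ is itself singular, so $\Det{\adj{A}}$ is ghost and hence equals its $\nu$-value $\Det{A}^{(n-1)\nu}=\Det{A}^{n-1}$. (The case $n=1$ is trivial.)
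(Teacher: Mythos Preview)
Your proof is correct and follows essentially the same route as the paper: both arguments expand the relevant determinant into strings of entries of $A$, verify that every vertex in the string digraph has in-degree and out-degree $n$ (for part~(1)) or $n-1$ (for part~(2)), invoke Proposition~\ref{multicyc1} to decompose into $n$-multicycles, and then use uniqueness of the optimal multicycle in the tangible case. The only noteworthy difference is organizational: you first isolate the identity-permutation contribution $\Det{A}^n$ and then bound the remaining ghost part, whereas the paper computes the $\nu$-value of the whole sum directly; and in the tangible case of~(2) you deduce the exact equality from~(1) via Theorem~\ref{thm:detOfProd} and cancellation in $\tT$, which the paper records parenthetically as an alternative but also proves directly by the same uniqueness reasoning used in~(1). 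Your treatment of the singular case of~(2) (``two permutations attain $\Det{A}$, or a ghost entry along the unique $\sigma$, forces $\adj{A}$ singular'') matches the paper's one-line assertion ``If $A$ is singular, then so is $\adj{A}$'' at the same level of detail.
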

\begin{proof} $ $ \pSkip (i) First we claim that $\nu(\Det{ \adj{A} }) = \nu(\Det{A}^{n-1}).$
First note that the $(i,k)$-entry of  $A \! \adj{A}$ is
$\sum_{j=1}^n a_{i,j}\, a'_{k, j}$. Hence, by definition of
tropical determinant,
\begin{equation}\label{adjdet}
\Det{ A \adj{A} }= \sum_{\pi \in S_n}  \sum_{j_1=1}^n \cdots
\sum_{j_n=1}^n   a_{1,j_1}\, a'_{\pi(1), j_1}\cdots a_{n,j_n}\,
a'_{\pi(n), j_n}.\end{equation}
 Let $\bt_1  = \Det{A}^n,$ and
  $\bt_2$ denote the right side of \eqref{adjdet}.
Clearly $\bt_2^\nu \ge  \bt_1^\nu,$ seen by taking $j_i = i$ and
$\pi = (1).$ (Noting that the diagonal entries of $  A \adj{A} $
all are $\Det{A},$ we see that $ \Det{  A \adj{A}} $ has
$\nu$-value at least that of $ \Det{ A }^n.$)

To prove the claim, it remains to show that $\bt_2^\nu \le
\bt_1^\nu.$ Viewing \eqref{adjdet} as a sum of strings of entries
of $A$, consider a string  of maximal $\nu$-value, and take its
digraph  (counting multiplicities). Any string occurs in some
\begin{equation}\label{what1} \sum_{j_1=1}^n \cdots
\sum_{j_n=1}^n   a_{1,j_1}\, a'_{\pi(1), j_1}\cdots a_{n,j_n}\,
a'_{\pi(n), j_n},\end{equation} so we can subdivide our string
into $n$ substrings, each a summand of $a_{i,j_i}\, a'_{\pi(i),
j_i}$ as $1 \le i \le n.$ In each such substring we have $n$
edges: The edge $(i,j_i)$ appears because of $a_{i,j_i},$ and
$n-1$ other edges appear in $ a'_{\pi(i), j_i}$, namely of the
form
$$a_{i'_1, j'_1}\cdots  a_{i'_{n-1}, j'_{n-1}}$$ where $\{ i'_1,
\dots, i'_{n-1}\} = \{ 1, \dots, \pi(i)\! -\! 1, \pi(i)\! +\! 1,\,
\dots, n\}$ and $\{ j'_1, \dots, j'_{n-1}\} = \{ 1, \dots, j_i\!
-\! 1, j_i\! +\! 1,\, \dots, n\}$.

In each of these $n$ substrings,  the in-degree of each vertex is
exactly one (since $j_i$ appears in $a_{i, j_i}$, and all the
other indices appear in the adjoint term $a'_{\pi(i), j_i}$); thus
the total in-degree of each vertex in any string arising from
\eqref{adjdet} is ~$n$.

The total out-degree in any substring in \eqref{what1} is:
$$\odeg(i) = \left \{
\begin{array}{ll}
  1 \text{ for each index} & \text{when } \pi(i) = i; \\
   2
\text{ for } i, \ 0 \text{ for } \pi(i), \ 1 \text{ for all } i'
\ne i,\pi(i) &  \text{when } \pi(i) \ne
i. \\
\end{array} \right.$$ Since $\pi$ is a
permutation, the total out-degree of each vertex in any string
arising from \eqref{adjdet} is $$\left( \sum _i 1\right)+ 1 -1 =
n.$$

Hence, by Proposition \ref{multicyc1}, the digraph of $A\adj{A}$
is a union of $n$ \nmulti s, each of whose weights has $\nu$-value
at most $\Det{A}$, by Remark~\ref{matchdet1}. Hence, the term
\Ref{what1} has $\nu$-value at most that of $|A|^n$, namely
$\bt_1^\nu$, as desired.

When $\Det{A}$ is tangible, there is a unique \nmulti\ $\cyc$
 of highest weight, corresponding to some permutation $\sigma \in S_n,$ and
thus the term \eqref{what1} is obtained precisely when $\cyc$ is
repeated $n$ times. This implies that   $j$ must be $\sigma(i)$ in
each leading term in \eqref{adjdet}, yielding a unique leading
term, and $\Det{ A \adj{A}} = \Det{A} ^n.$

When $\Det{A}$ is not tangible, then either our \nmulti \
 of highest weight yields a ghost term, or we have several
\nmulti s
 of highest weight, corresponding to permutations yielding
 equal contributions to $\Det{A}$; hence $\bt _1$ and $\bt _2$ are ghosts, and again we have equality.
\pSkip (ii) Recall the formula: \begin{equation}\label{adjdet2}
\Det{ \adj{A} }= \sum_{\pi \in S_n}\prod _{i=1}^n  a'_{i,
\pi(i)}.\end{equation} The  digraph for each summand has in-degree
and out-degree $(n-1)$ for each vertex (since $\pi$ is a
permutation), so we can separate it into $(n-1)$ individual
\nmulti s, each of which has weight of $\nu$-value $\le \Det{
A}^\nu,$ proving $$\nu \left(\,\Det{ \adj{A} }\,\right) \le
\nu(\,\Det{A}^{n-1}\,).$$ On the other hand, if we take a
permutation $\pi \in S_n$ attaining $ \Det{A},$ then clearly, for
each $i_0,$ $\prod _{i\ne i_0}   a_{i, \pi(i)} =
a'_{i_0,\pi(i_0)}$, implying $a_{i, \pi(i)}\,a'_{i, \pi(i)} =
\Det{A},$ and thus
$$\nu( \Det{ \adj{A} })\ge \nu\( \sum_{\pi \in S_n} \prod _{i=1}^n
a'_{i, \pi(i)}\)  = \nu\( \prod _{i=1}^n \frac {
\Det{A}}{a_{i,\pi(i)}}\)  = \nu\( \frac {\Det{A}^n}{\Det{A}}\) =
\nu\( \Det{A}^{n-1}\) .$$

 If $A$ is \regular , then $\adj{A}$ is \regular , since we have
 only one term of maximal $\nu$-value in computing $\Det{A}$ and
 thus  $\Det{\adj{A}}$, yielding
 $\Det{\adj{A}}  = \Det{ A }^{n-1}.$

  If $A$ is singular, then so is $\adj{A}$, concluding
the proof.

(Note in the important case that $R$ is a supertropical domain and
$A$ is
 \regular, the assertion of~(ii) follows at once from (i), since Theorem \ref{thm:detOfProd} implies
$$\Det{ \adj{A} } = \frac {\Det{ A \adj{A} }}{\Det{ A}} = \Det{ A }^{n-1}.)$$
\end{proof}

 In case $\Det{A}$ is invertible in $R$, we define the
\textbf{canonical \quasi-inverse} of $A$ to be
$$A^\nb  = \frac
{\rone}{\Det{A}}\adj{A}.$$
Thus $A A^\nb  =\um _ A$, and $A^\nb  A= \um '_ A$. Note that $\um
'_ A$ and $\um _ A$ may differ off the diagonal, although
$$\um _ A A = A  A^\nb  A = A \um' _ A.$$
For example, taking $A = \( \begin{matrix} 0 & 0 \\
1 & 2
\end{matrix}\) $, we have $A^\nb  = \( \begin{matrix} 0 & -2 \\
-1 & -2
\end{matrix}\) ;$ thus $A A^\nb  = \( \begin{matrix} 0 & (-2)^\nu \\
1^\nu & 0
\end{matrix}\)  $
whereas $A^\nb  A=  \( \begin{matrix} 0 & 0^\nu \\
(-1)^\nu & 0.
\end{matrix}\) .$ The following result is given in
\cite{IzhakianRowen2009TropicalRank}, with different proof.

\begin{cor}\label{thm:idm1}
When $\Det{ A}$ is invertible,
 $\Det{ \um _ A}= \rone.$\end{cor}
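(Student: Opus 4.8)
The plan is to reduce the statement directly to Theorem~\ref{adjeq}(i) by exploiting the multilinearity of the tropical determinant. By Definition~\ref{quas} we have $\um_A = A\,\frac{\adj{A}}{\Det{A}}$. Since $\Det{A}$ is invertible in $R$, the scalar $\frac{\rone}{\Det{A}}$ is central, so $A\,\frac{\adj{A}}{\Det{A}} = \frac{\rone}{\Det{A}}\,(A\adj{A})$; that is, $\um_A$ is the matrix $A\adj{A}$ with every entry multiplied by $\Det{A}^{\invr}$, and in particular each of its $n$ rows is $\Det{A}^{\invr}$ times the corresponding row of $A\adj{A}$.

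Next I would invoke property (1) of Theorem~\ref{formula1} (multilinearity of $\Phi_\gm$, here with $\gm = \rone$, i.e.\ for the normalized determinant $\Det{\,\cdot\,}$) to pull the scalar $\Det{A}^{\invr}$ out of each of the $n$ rows in turn, obtaining
$$\Det{\um_A} = \bigl(\Det{A}^{\invr}\bigr)^{n}\,\Det{A\adj{A}} = \Det{A}^{-n}\,\Det{A\adj{A}}.$$
Finally, Theorem~\ref{adjeq}(i) gives $\Det{A\adj{A}} = \Det{A}^{n}$, whence $\Det{\um_A} = \Det{A}^{-n}\Det{A}^{n} = \rone$, as claimed.

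There is essentially no obstacle here: all of the real work already sits in Theorem~\ref{adjeq}(i), and what remains is the routine observation that scaling all rows of an $n\times n$ matrix by a central unit $c$ scales its tropical determinant by $c^{n}$. The only minor point worth keeping in mind is that $\Det{A}$ being invertible forces $\Det{A}\in\tT$ (a product of a ghost with any element lies in the ideal $\tGz$, hence is not invertible), so $\um_A$ is genuinely \regular\ and the division by $\Det{A}$ is legitimate; but even this is not strictly needed for the computation, only the existence and centrality of $\Det{A}^{\invr}$.
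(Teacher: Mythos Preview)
Your proof is correct and is exactly the argument the paper intends: the result is stated as an immediate corollary of Theorem~\ref{adjeq}(i) (with no explicit proof given in the paper), and the only missing step is the homogeneity observation $\Det{cA} = c^{n}\Det{A}$ for a central scalar $c$, which you supply via property~(1) of Theorem~\ref{formula1}. There is nothing to add.
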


 Although $\um _A$ is not the
identity, we obtain other noteworthy properties from a closer
examination of the reduced digraph $\grph _A$  of $A$, and of how
it is used to compute $ A \adj
  A.$ As before, we write $A = (a_{i,j})$ and   $\adj
  A = (a'_{i,j}).$ Since the $(i,j)$ entry of $A \adj
  A $ is $  \sum
a_{i,k}\, a'_{j,k},$  we examine the terms $a_{i,k}\, a'_{j,k}$
where $i \ne j$.

The digraph $\grph_{i,j,k}$ of  $\grph_A$ corresponding to any
string appearing in $a_{i,k}\, a'_{j,k}$ has in-degree 1 at each
vertex (since $a'_{j,k}$ provides in-degree 1 at every vertex
except $k$, and $a_{i,k}$ provides in-degree 1 at the vertex
~$k$); likewise $\grph_{i,j,k}$ has out-degree 2 at $i$, 0 at $j$,
and 1 at
 each other
vertex. Let us call such a subgraph an \textbf{$n$-\pnmulti}.

Conversely, given an $n$-\pnmulti \ $\cyc$ having out-degree 2 at
$i$ and $0$ at $j$, we take $a_{i,k}$ corresponding to an edge of
$\cyc$, and note that the remaining edges correspond to some
$(n-1)$-multicycle in the graph corresponding to $a'_{j,k}$; thus
$\cyc$ provides a term of $\nu$-value at most $a_{i,k}\,
a'_{j,k}.$ (Incidentally, since the out-degree at $i$ is 2, we
have two possible choices of $k$ that provide the same
$\nu$-value, thereby giving us an alternate proof that the
off-diagonal entries of $A \adj   A $ are ghost.) Now we need
another immediate consequence of Lemma~ \ref{graph1}:

  \begin{lemma}\label{graph2} Assume that $\grph = (\mathcal V, \mathcal E)$, where each vertex $i \in \mathcal V$
has $\odeg(i) = k $, and all but two vertices have $\ideg(i) = k
$, and one vertex $i'$ has $\ideg(i') = k+1 $ and one vertex $j'$
has $\ideg(j') = k-1 $. Then $\grph$ is a union of $k-1$ \nmulti s
and an $n$-\pnmulti.
\end{lemma}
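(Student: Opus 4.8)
The plan is to peel off $n$-multicycles one at a time using Lemma~\ref{graph1}(ii), and then to recognize the residual graph, by its degree sequence, as an $n$-\pnmulti. First I would note that the hypothesis of the present lemma is exactly the hypothesis of Lemma~\ref{graph1}(ii): every out-degree equals $k$, every in-degree equals $k$ apart from one vertex $i'$ with $\ideg(i')=k+1$ and one vertex $j'$ with $\ideg(j')=k-1$ (and $i'\ne j'$, since $k+1\ne k-1$). Hence, provided $k>1$, Lemma~\ref{graph1}(ii) produces an \nmulti\ $C\subseteq\grph$.

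I would then induct on $k$. For the base case $k=1$ there is nothing to remove: $\grph$ already has out-degree $1$ at every vertex and in-degree $1$ at every vertex except in-degree $2$ at $i'$ and in-degree $0$ at $j'$, which is by definition an $n$-\pnmulti, so the claimed decomposition holds with no multicycles. For $k>1$, delete the edges of the \nmulti\ $C$ obtained above. Since $C$ contributes in-degree $1$ and out-degree $1$ at every vertex, $\grph\setminus C$ has all out-degrees equal to $k-1$ and all in-degrees equal to $k-1$, except $\ideg(i')=k=(k-1)+1$ and $\ideg(j')=k-2=(k-1)-1$. Thus $\grph\setminus C$ satisfies the hypothesis of the lemma with $k$ replaced by $k-1$, so by induction it is a union of $k-2$ \nmulti s and an $n$-\pnmulti; adjoining $C$ exhibits $\grph$ as a union of $k-1$ \nmulti s and an $n$-\pnmulti, as desired.

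The only points requiring care are pure bookkeeping: that deleting an \nmulti\ lowers every in-degree and every out-degree by exactly $1$ (so the hypothesis is preserved with $k$ decremented), that the parameter stays $>1$ at every step where Lemma~\ref{graph1}(ii) is actually invoked (it runs through $k,k-1,\dots,2$ over the $k-1$ extractions, so the constraint $k>1$ in that part of the lemma is always met and the base case is reached at parameter $1$), and that after exactly $k-1$ deletions the residual graph has in-degree and out-degree $1$ at every vertex except in-degree $2$ at $i'$ and in-degree $0$ at $j'$ — precisely the degree pattern defining an $n$-\pnmulti, and not that of a genuine \nmulti. None of this presents a real obstacle; the argument is the same iterative removal used in the proof of Proposition~\ref{multicyc1}, now applied to the slightly unbalanced situation covered by Lemma~\ref{graph1}(ii).
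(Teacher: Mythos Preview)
Your proof is correct and follows exactly the paper's approach: extract an \nmulti\ via Lemma~\ref{graph1}, delete it, and proceed by induction on~$k$, just as in Proposition~\ref{multicyc1}. In fact you are more careful than the paper on two points: you spell out the base case $k=1$ and the degree bookkeeping explicitly, and you cite part~(ii) of Lemma~\ref{graph1} (which is the part matching the hypothesis here, where the out-degrees are all equal and the in-degrees have the $k+1$/$k-1$ exceptions), whereas the paper's one-line proof cites part~(iii), which appears to be a slip.
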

\begin{proof} By Lemma~ \ref{graph1}(iii), $\grph$ contains an
\nmulti, which we delete and then conclude by induction on~$k$.
\end{proof}

\begin{thm} \label{thm:idm0} $(A \adj{A})^2 = \Det{A}A \adj{A},$ for every matrix $A$. \end{thm}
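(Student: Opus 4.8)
The plan is to compute $B^2$ entry by entry, where $B := A\adj{A} = (b_{i,j})$ with $b_{i,j} = \sum_{k} a_{i,k}\,a'_{j,k}$; here $a'_{j,k} = \Det{A'_{j,k}}$ as in Formula~\eqref{adjform}. By Equation~\eqref{det3} we have $b_{i,i} = \Det{A}$ for every $i$, and by Remark~\ref{rmk:adj} we have $b_{i,j}\in\tGz$ whenever $i\ne j$. The goal is to show that the $(i,j)$-entry $(B^2)_{i,j} = \sum_{k} b_{i,k}\,b_{k,j}$ equals $\Det{A}\,b_{i,j}$ for all $i,j$. Since the summand $k=i$ equals $b_{i,i}\,b_{i,j} = \Det{A}\,b_{i,j}$ (as does the summand $k=j$ when $i\ne j$, using commutativity), we get for free that $(B^2)_{i,j}^{\,\nu} \ge (\Det{A}\,b_{i,j})^\nu$; the content is the matching upper bound and then the passage from $\nu$-values to actual elements.

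For the upper bound I would re-run the digraph technique used in \thmref{adjeq} and in the paragraph preceding \lemref{graph2}. Fix $i\ne j$ and $k\notin\{i,j\}$, expand $b_{i,k}\,b_{k,j}$ via Formula~\eqref{adjform} into a sum of strings of entries of $A$, and observe that the digraph of such a string has in-degree $2$ at every vertex and out-degree $3$ at $i$, $1$ at $j$, and $2$ at every other vertex. By \lemref{graph1}(iii) (exactly as in \lemref{graph2}) this digraph is the union of an \nmulti\ and an $n$-\pnmulti\ with exceptional vertices $i$ (out-degree $2$) and $j$ (out-degree $0$); the \nmulti\ has weight of $\nu$-value $\le \Det{A}^\nu$ by \remref{matchdet1}, and the $n$-\pnmulti, being of the kind that computes $b_{i,j}$, has weight of $\nu$-value $\le b_{i,j}^\nu$ (again by the discussion before \lemref{graph2}). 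So every summand of $(B^2)_{i,j}$ other than the copies of $\Det{A}\,b_{i,j}$ has $\nu$-value $\le \Det{A}^\nu\,b_{i,j}^\nu$. The analogous degree count for $i=j$, $k\ne i$ produces a digraph with in-degree and out-degree both equal to $2$ at every vertex, which by \propref{multicyc1} splits into two \nmulti s, so $(b_{i,k}\,b_{k,i})^\nu \le \Det{A}^{2\nu}$. In all cases $(B^2)_{i,j}^{\,\nu} = (\Det{A}\,b_{i,j})^\nu$.

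It remains to upgrade equality of $\nu$-values to equality of elements. If $i\ne j$, or if $i=j$ and $A$ is singular, the dominant summand — $\Det{A}\,b_{i,j}$, respectively $\Det{A}^2$ — lies in $\tGz$ and has the maximal $\nu$-value, so it absorbs all the remaining summands and $(B^2)_{i,j} = \Det{A}\,b_{i,j}$, as desired. The decisive remaining case is $i=j$ with $\Det{A}$ tangible; then $\Det{A}^2 = b_{i,i}^2$ is tangible, so I must show that each $b_{i,k}\,b_{k,i}$ with $k\ne i$ has $\nu$-value \emph{strictly} below $\Det{A}^{2\nu}$, so that $\Det{A}^2$ strictly dominates. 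If this failed, some string in $b_{i,k}\,b_{k,i}$ would have digraph $\Gamma$ a union of two \nmulti s each of weight with $\nu$-value $\Det{A}^\nu$; since $\Det{A}$ is tangible, \remref{matchdet1} forces both to be the unique maximal \nmulti, say corresponding to a permutation $\sigma$, so $\Gamma$ is the edge-multiset in which each $(v,\sigma(v))$ occurs twice, and in particular both edges of $\Gamma$ issuing from $i$ equal $(i,\sigma(i))$. But reading off the string — of the form $a_{i,\ell}\bigl(\prod_{p\ne k}a_{p,\rho(p)}\bigr)a_{k,m}\bigl(\prod_{q\ne i}a_{q,\tau(q)}\bigr)$ with $\rho(k)=\ell$ — its two edges out of $i$ are $(i,\ell)$ and $(i,\rho(i))$, forcing $\ell = \rho(i) = \sigma(i)$; together with $\rho(k)=\ell$ this gives $\rho(i)=\rho(k)$ with $i\ne k$, contradicting injectivity of $\rho$. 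Hence $\Det{A}^2$ strictly dominates and $(B^2)_{i,i} = \Det{A}^2 = \Det{A}\,b_{i,i}$, completing the proof. I expect this tangible-diagonal step to be the only genuine obstacle: off the diagonal, and on the diagonal of a singular matrix, the extraneous terms are ghosts that are harmlessly swallowed, whereas on the diagonal of a \regular\ matrix one really does need the uniqueness of the top \nmulti\ to keep a ghost from corrupting the tangible value $\Det{A}^2$.
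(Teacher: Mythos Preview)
Your proof is correct and follows essentially the same route as the paper: compute $(A\adj{A})^2$ entry by entry, count in/out-degrees of the digraph of each string, and use the Hall-type lemmas (\lemref{graph1}, \lemref{graph2}, \propref{multicyc1}) to peel off an \nmulti\ of weight $\le \Det{A}^\nu$ and leave an $n$-\pnmulti\ contributing to $b_{i,j}$. The paper's version is terser: it writes the $(i,j)$-entry directly as $\sum_{k,\ell,m} a_{i,k}a'_{\ell,k}a_{\ell,m}a'_{j,m}$, gets the lower bound by taking $\ell=j$ (your $k=j$), cites \lemref{graph2} for the upper bound without splitting into your two degree patterns, and then simply asserts that ``the diagonal terms are all tangible iff $A$ is \regular.'' Your contradiction argument for that last point --- forcing both extracted \nmulti s to be the unique maximal one when $\Det{A}\in\tT$, and then reading off $\rho(i)=\ell=\rho(k)$ from the two edges out of $i$ --- is exactly the verification the paper leaves to the reader, and it is the genuine content of the diagonal case.
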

\begin{proof} We check   that $(A \adj{A})^2 = \Det{A} A \adj{A}$
 at each entry. The $(i,j)$-entry $b_{i,j}$ of $(A
\adj{A})^2$ is
$$\sum _{k,\ell,m = 1}^n a_{i,k} a'_{\ell,k}a_{\ell,m}a'_{j,m}.$$
Taking $\ell = j$ yields $\sum _{k,m} a_{i,k}
a'_{j,k}a_{j,m}a'_{j,m}= \Det{ A} \sum _{k,m} a_{i,k} a'_{j,k}$,
proving that $b_{i,j}$ has $\nu$-value at least that of the
$(i,j)$-entry of $\Det{A}A \adj{A}$. The reverse inequality comes
from Lemma \ref{graph2}, which enables us to extract an \nmulti,
whose $\nu$-value is at most $\Det{ A}$. Clearly the off-diagonal
terms of $(A \adj{A})^2$ are ghosts; the diagonal terms are all
tangible iff $A$ is \regular , for, in that case, the tropical
determinant is tangible.
\end{proof}

\begin{thm}\label{thm:idm} When $\Det{A}$ is invertible, $A A^\nb $ and $A^\nb  A $ are
quasi-identities (not necessarily the same),  and thus
 $A^\nb $ is a
\quasi-inverse for $A$. \end{thm}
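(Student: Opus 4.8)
The plan is to verify, for each of the two matrices $\um_A = A A^\nb$ and $\um'_A = A^\nb A$, the three defining conditions of a \quasi-identity: that it (a) has the form $\um + \zm_\tG$ with $\zm_\tG$ \quasi-zero, (b) is \regular, and (c) is multiplicatively idempotent. Once this is done the last assertion is immediate from the definition of \quasi-inverse, since $A A^\nb = \um_A$ and $A^\nb A = \um'_A$ are then both \quasi-identities.

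For (a): recall $A^\nb = \frac{\rone}{\Det{A}}\adj{A}$, so $\um_A = \frac{\rone}{\Det{A}}A\adj{A}$ and $\um'_A = \frac{\rone}{\Det{A}}\adj{A}\,A$. It was already observed (combining parts~(i) and~(ii) of \remref{rmk:adj}) that $\um_A$ and $\um'_A$ carry $\rone$ on the diagonal and ghost entries off the diagonal; letting $\zm_\tG$ be the matrix agreeing with $\um_A$ (resp.\ $\um'_A$) off the diagonal and equal to $\rzero$ on the diagonal, we get $\um_A = \um + \zm_\tG$ (resp.\ $\um'_A = \um + \zm_\tG$) with $\zm_\tG$ \quasi-zero. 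Note the two $\zm_\tG$'s may genuinely differ, consistent with the statement.

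For (b): \corref{thm:idm1} gives $\Det{\um_A} = \rone \in \tT$. To handle $\um'_A$ I would pass to the transpose: using \remref{transpose} one checks readily that $(\adj{A})^{\trn} = \adj{A^{\trn}}$, whence $(\um'_A)^{\trn} = \frac{\rone}{\Det{A}}(\adj{A}\,A)^{\trn} = \frac{\rone}{\Det{A^{\trn}}}A^{\trn}\adj{A^{\trn}} = \um_{A^{\trn}}$, so $\Det{\um'_A} = \Det{\um_{A^{\trn}}} = \rone$ by \corref{thm:idm1} applied to $A^{\trn}$ together with \remref{transpose}. For (c): \thmref{thm:idm0} states $(A\adj{A})^2 = \Det{A}\,A\adj{A}$, and since $A\adj{A} = \Det{A}\,\um_A$ this reads $\Det{A}^2\,\um_A^2 = \Det{A}^2\,\um_A$; cancelling the invertible central scalar $\Det{A}^2$ yields $\um_A^2 = \um_A$. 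Applying the same to $A^{\trn}$ and transposing — using $(X^2)^{\trn} = (X^{\trn})^2$ and $\um'_A = (\um_{A^{\trn}})^{\trn}$ from the previous step — gives $(\um'_A)^2 = (\um_{A^{\trn}}^2)^{\trn} = (\um_{A^{\trn}})^{\trn} = \um'_A$.

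Having checked (a)--(c) for both matrices, $\um_A$ and $\um'_A$ are \quasi-identities, hence $A^\nb$ is a \quasi-inverse of $A$. The substantive input is condition~(c): idempotency is not visible from the adjoint formula and is exactly the content of \thmref{thm:idm0} (which itself rests on the graph-theoretic \lemref{graph2}); the rest is bookkeeping, the one point requiring care being the identity $(\adj{A})^{\trn} = \adj{A^{\trn}}$ used to transfer the statements about $\um_A$ over to $\um'_A$.
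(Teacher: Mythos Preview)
Your proof is correct and follows essentially the same approach as the paper: the paper's entire proof is the single line ``This is Corollary~\ref{thm:idm1} and Theorem~\ref{thm:idm0} together,'' which is precisely your (b) and (c), with (a) having been noted just after Definition~\ref{quas}. Your explicit transpose argument for $\um'_A$ (via $(\adj{A})^{\trn} = \adj{A^{\trn}}$) is a welcome addition, since the paper leaves the passage from $\um_A$ to $\um'_A$ implicit.
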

\begin{proof} This is Corollary \ref{thm:idm1} and Theorem
\ref{thm:idm0} together.
\end{proof}

\begin{rem}\label{makegen} In case $R$ is a supertropical
semifield, then $A^\nb$ has been defined whenever $|A| \in \tT.$
We can also define $A^\nb$  for $|A|\ne \rzero$ ghost by dividing
each entry of $\adj{A}$ by some tangible element whose $\nu$-value
is $|A|$. Then $A A^\nb =   \bar I_A$  where $\bar I_A$ is
$\rone^\nu$ on the diagonal and ghost off the diagonal, and
Theorem \ref{thm:idm0} now implies that $(\bar I_A)^2 = \bar I_A$
since $(\rone^\nu)^2 = \rone^\nu.$ Likewise, we can write $A^\nb A
= \bar I'_A,$ where  $\bar I'_A$ is $\rone^\nu$ on the diagonal
and ghost off the diagonal, with $(\bar I'_A)^2 = \bar I'_A$.
These observations enable us to treat singular matrices in an
analogous manner to nonsingular ones, just as long as $|A| \ne
\rzero.$
\end{rem}

One might hope that the same proof of Theorem~\ref{thm:idm0} would
yield the better result that
$$A \adj{A} A = \Det{A} A,$$ (i.e., $A A^\nb  A = A$ for $\Det{A}$
invertible), which we call the ``von Neumann regularity
condition'', cf. \cite{Lallement}. Unfortunately, this is false in
general! The difficulty is that one might not be able to extract
an $n$-multicycle from
\begin{equation}\label{vonNeum} a_{i,j}a'_{k,j}a_{k,\ell}.\end{equation} For example, when $n=3$, we have
the term
$$a_{1,1}(a_{1,3}a_{3,2})a_{2,2}= a_{1,1}a'_{2,1}a_{2,2},$$ which does not contain an
$n$-multicycle. This is displayed explicitly in the following
example (in logarithmic notation, as usual).

\begin{example}\label{uppertr}

$$ \text{Let } \ \ A = \( \begin{matrix} 10 & 0 & 10\\
 0 & 10 & 0\\  0 & 10 & 1
\end{matrix}\). \qquad \text{ \ Then  \ } \adj{A} = \( \begin{matrix} 11 & 20 & 20\\
 1 & 11 & 10^\nu\\  10^\nu & 20 & 20
\end{matrix}\) ,$$
$$A \adj{A}  = \( \begin{matrix} 21 & 30^\nu & 30^\nu\\
 11^\nu & 21 & 20^\nu\\  11^\nu & 21^\nu & 21
\end{matrix}\) ,\qquad \text{and} \ \  A \adj{A} A = \( \begin{matrix} 31 & 40^\nu & 31^\nu\\
 21^\nu & 31 & 21^\nu\\  21^\nu & 31^\nu & 22
\end{matrix}\).$$
As expected, the von Neumann regularity condition is ruined by the
(1,2) position.
\end{example}

An even easier example of the same phenomenon can be seen via
triangular matrices, again for $n\ge 3$.

\begin{example} $ $ \pSkip Take
$A = \vvMat{0}{a}{b} { }{0}{c}  { }{ }{0}$. Then $\adj{A} =
\vvMat{0}{a}{b + ac} { }{0}{c}  { }{ }{0}$, and
$$A\adj{A}A =A\adj{A}= \vvMat{0}{a^\nu}{b^\nu + (ac)^\nu} { }{0}{c^\nu}  {
}{ }{0} \neq A,$$   when $(ac)^\nu
> b^\nu$.
\end{example}

$ $ From a positive perspective, if each digraph arising from
\eqref{vonNeum} does contain an \nmulti, then the matrix $A$
satisfies the von Neumann regularity condition. In particular,
this is true when $n=2.$

Conversely to Theorem~\ref{thm:idm}, we have

\begin{prop} Each quasi-identity $I_\tG$ satisfies $\adj{I_\tG}
= I_\tG^\nb  = I_\tG,$ and thus $I_{I_\tG} = I_\tG.$\end{prop}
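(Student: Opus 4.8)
The plan is to compute $\adj{I_\tG}$ directly, entry by entry, from Formula~\eqref{adjform}, and to see that every contribution other than the ``obvious'' one is negligible because the defining properties of a quasi-identity force its cycle weights to be small. Write $I_\tG=(a_{i,j})$, so $a_{i,i}=\rone$, each $a_{i,j}$ with $i\ne j$ is a ghost or $\rzero$, and (from the multiplicative idempotency of $I_\tG$, as recorded just after its definition) $a_{i,j}a_{j,i}<\rone^\nu$ for $i\ne j$ and $a_{i,j}^\nu a_{j,k}^\nu\le a_{i,k}^\nu$ for $i\ne k$. The first step is to extract the two combinatorial facts I will use repeatedly: for $i\ne j$, iterating $a_{i,j}^\nu a_{j,k}^\nu\le a_{i,k}^\nu$ along a simple path $i\to\cdots\to j$ in the digraph of $I_\tG$ shows its weight has $\nu$-value $\le a_{i,j}^\nu$; combining this with $a_{i,j}a_{j,i}<\rone^\nu$ shows the weight of any simple cycle has $\nu$-value $<\rone^\nu$. (The cycle statement also follows painlessly from $I_\tG^m=I_\tG$, since a cycle weight through a vertex $i_1$ is one summand of the tangible $(i_1,i_1)$-entry $\rone$ of $I_\tG^m$, hence cannot attain $\nu$-value $\rone^\nu$.)

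Next I would handle the diagonal entries: by \eqref{adjform}, $a'_{i,i}=\sum_{\pi\in S_n,\ \pi(i)=i}\prod_{t\ne i}a_{t,\pi(t)}$, in which the identity permutation contributes the tangible element $\prod_{t\ne i}a_{t,t}=\rone$, while every other admissible $\pi$ moves some $t\ne i$ and so contributes a product containing a simple-cycle weight, hence of $\nu$-value $<\rone^\nu$; therefore $a'_{i,i}=\rone=a_{i,i}$. For the off-diagonal entries ($i\ne j$) I would first observe that every summand of $a'_{j,i}=\sum_{\pi(j)=i}\prod_{t\ne j}a_{t,\pi(t)}$ contains the factor $a_{i,\pi(i)}$ with $\pi(i)\ne i$ (since $i$ already has the preimage $j\ne i$, so $i$ is not fixed by $\pi$), hence is a ghost; then, decomposing each contributing $\pi$ into disjoint cycles, the cycle through $j$ contributes, among its $t\ne j$ factors, the weight of a simple path $i\to\cdots\to j$ (of $\nu$-value $\le a_{i,j}^\nu$), and every other nontrivial cycle of $\pi$ contributes a weight of $\nu$-value $<\rone^\nu$, so each summand is a ghost of $\nu$-value $\le a_{i,j}^\nu$. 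Since the transposition $(i\ j)$ produces exactly the summand $a_{i,j}$, which is itself a ghost of $\nu$-value $a_{i,j}^\nu$, the whole sum collapses to $a_{i,j}^\nu=a_{i,j}$. This gives $\adj{I_\tG}=I_\tG$, after which $I_\tG^\nb=\frac{\rone}{\Det{I_\tG}}\adj{I_\tG}=\adj{I_\tG}=I_\tG$ (using $\Det{I_\tG}=\rone$) and $I_{I_\tG}=I_\tG\frac{\adj{I_\tG}}{\Det{I_\tG}}=I_\tG I_\tG=I_\tG$ by the idempotency of $I_\tG$.

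The main obstacle is purely bookkeeping in the off-diagonal case: checking that an arbitrary permutation $\pi$ with $\pi(j)=i$ really decomposes so that its cycle through $j$ reads off a genuine \emph{simple} path from $i$ to $j$ (so that the path-telescoping bound applies) while the leftover cycles only lower the $\nu$-value, and that every summand is a ghost (which is exactly what lets the single ghost $a_{i,j}$ absorb all the others). Everything else is forced by Formula~\eqref{adjform} together with the defining constraints on a quasi-identity.
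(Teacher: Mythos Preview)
Your proof is correct and follows essentially the same route as the paper's: compute $\adj{I_\tG}$ entry by entry via Formula~\eqref{adjform}, decompose each contributing permutation into a simple path from $i$ to $j$ together with residual cycles, and use that path weights are $\nu$-bounded by $a_{i,j}$ while nontrivial cycle weights are $\nu$-bounded by $\rone$. The only cosmetic difference is that the paper bounds cycle weights by appealing directly to $\Det{I_\tG}=\rone$, whereas you derive the (slightly sharper) strict bound from the recorded inequalities $a_{i,j}a_{j,i}<\rone^\nu$ and $a_{i,j}^\nu a_{j,k}^\nu\le a_{i,k}^\nu$; either suffices, and your explicit check that every off-diagonal summand is ghost is a welcome bit of extra care.
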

\begin{proof} Write $I_\tG = (a_{i,j}).$ The $(i,j)$-entry $a'_{i,j}$ of
$\adj{I_\tG}$ is the sum of those terms corresponding to a path in
$\grph _{I_\tG}$ having out-degree 0 at $i$, in-degree 0 at $j$,
and otherwise out-degree and in-degree 1 at all vertices. When $i
= j,$ then this is an $(n-1)$-multicycle, which must have weight $
\le \rone$ since $\Det{ I_\tG} = \rone,$ and we get $\rone$ from
the string $$a_{1,1}\cdots a_{i-1,i-1}a_{i+1,i+1}\cdots a_{n,n} =
\rone^{n-1} = \rone.$$

Thus it remains to check those $a'_{i,j}$ for $i\ne j.$ We need to
show that $a'_{i,j} = a_{j,i}$, which by hypothesis  is ghost.
 In computing $a'_{i,j}$, we have the term
 $$a_{j,i} \prod _{k
\ne i,j} a_{k,k} = a_{i,j} \, \rone \cdots \rone = a_{j,i} \, ,$$
implying ${a'_{i,j}}^\nu \ge {a_{j,i}}^\nu .$ But all strings in
$a'_{i,j}$ have $\nu$-value $\le {a_{j,i}},$ because they can be
decomposed as the union of cycles and a path from $j$ to $i$; the
weight of any cycle must have $\nu$-value at most $\rone^\nu$
(since $\Det{ I_\tG }= \rone$), and the weight of any path from
$j$ to $i$ has $\nu$-value most ${a_{j,i}}$ because $I_\tG$ is
idempotent. Thus, $a'_{i,j}= a_{j,i}.$
\end{proof}

We conclude that a necessary and sufficient condition for a matrix
$B$ to have the form $A A^\nb  $ is for $B$ to be a
quasi-identity. By symmetry, this is also a necessary and
sufficient condition for the matrix $B$ to have the form $ A^\nb
A$ (but possibly with different $A$).

This leads to another positive result concerning von Neumann
regularity. First we want to compare $\adj{A}$ and $\adj{A A^\nb
A}$ for $A$ nonsingular. One must be careful, since it is not
necessarily the case that
$\adj{AA^\nb A } = \adj{A};$ for example, with  $A = \( \begin{matrix} 0 & 1 \\
-\infty & 0
\end{matrix}\) $, we have $\adj{A} = A$ but $\adj{A} A = \adj{AA^\nb A} = \( \begin{matrix} 0 & 1^\nu \\
-\infty & 0
\end{matrix}\) $.
\newpage

\begin{lem}\label{adjmat} The corresponding entries of
$\adj{AA^\nb A }$ and $\adj{A}$ have the same $\nu$-values.
\end{lem}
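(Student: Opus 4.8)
The plan is to establish the equality one matrix entry at a time. Write $B = A A^\nb A$ (recall $A$ is assumed nonsingular, so $A^\nb$ is defined), and recall that $a'_{i,j} = \Det{A'_{i,j}}$, so that $\adj{A}$ has $(i,j)$-entry $a'_{j,i}$ and $\adj{B}$ has $(i,j)$-entry $\Det{B'_{j,i}}$. Thus it suffices to prove $\nu(\Det{B'_{j,i}}) = \nu(\Det{A'_{j,i}})$ for every $i,j$. Since $\Det{\cdot}$ is a sum of products of entries, it is monotone in the $\nu$-values of those entries, and I would obtain the two inequalities separately.

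For $\nu(\Det{B'_{j,i}}) \ge \nu(\Det{A'_{j,i}})$, I would use $B = \um_A A$ with $\um_A = A A^\nb$, which is a quasi-identity by \thmref{thm:idm} and hence equals $\rone$ on the diagonal. The $k = i$ summand of $(\um_A A)_{i,j} = \sum_k (\um_A)_{i,k}\, a_{k,j}$ is exactly $a_{i,j}$, so $\nu(B_{i,j}) \ge \nu(a_{i,j})$ for all $i,j$; passing to the minors $B'_{j,i}$ and $A'_{j,i}$ and invoking the monotonicity of $\Det{\cdot}$ gives the inequality.

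The reverse inequality $\nu(\Det{B'_{j,i}}) \le \nu(\Det{A'_{j,i}})$ is the heart of the matter, and I would handle it by the string/digraph technique used for \thmref{adjeq} and \thmref{thm:idm0}. Using $A^\nb = \frac{1}{\Det{A}}\adj{A}$ one gets $B_{i,j} = \frac{1}{\Det{A}}\sum_{k,m} a_{i,k}\, a'_{m,k}\, a_{m,j}$; substituting $a'_{m,k} = \Det{A'_{m,k}}$ as a sum of $(n-1)$-fold products and then expanding $\Det{B'_{j,i}} = \sum_\tau \prod_{\ell \ne j} B_{\ell,\tau(\ell)}$ over the bijections $\tau \colon \{1,\dots,n\}\setminus\{j\} \to \{1,\dots,n\}\setminus\{i\}$ writes $\Det{B'_{j,i}}$ as a sum of terms $\frac{1}{\Det{A}^{n-1}}\, w(T)$, where each $T$ is the concatenation, over $\ell \ne j$, of a piece $a_{\ell,k_\ell}\,(\prod_{r\ne m_\ell} a_{r,\mu_\ell(r)})\, a_{m_\ell,\tau(\ell)}$. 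A direct degree count (the digraph of the $\ell$-piece has in-degree $2$ at $\tau(\ell)$, out-degree $2$ at $\ell$, and degree $1$ everywhere else) shows that the digraph $G_T$, counted with multiplicities, has in-degree $n$ at every vertex except in-degree $n-1$ at the single vertex $i$, and out-degree $n$ at every vertex except out-degree $n-1$ at the single vertex $j$.

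Now \lemref{graph1}(iv) with $k = n$ extracts an \nmulti\ from $G_T$; removing it lowers every in- and out-degree by $1$, so the same lemma applies with $k = n-1$, and iterating $n-1$ times peels off $n-1$ \nmulti s and leaves a digraph with out-degree $1$ everywhere except $0$ at $j$ and in-degree $1$ everywhere except $0$ at $i$ — precisely the digraph of some string $\prod_{m\ne j} a_{m,\mu(m)}$ occurring in $\Det{A'_{j,i}}$ (when $i = j$ it degenerates to an $(n-1)$-\multi\ on $\{1,\dots,n\}\setminus\{i\}$, a term of $\Det{A'_{i,i}}$). Since the weight of an \nmulti\ has $\nu$-value at most $\Det{A}^\nu$ by \remref{matchdet1}, and the leftover string has $\nu$-value at most $\nu(\Det{A'_{j,i}})$, we conclude $\nu(w(T)) \le \nu(\Det{A}^{n-1})\,\nu(\Det{A'_{j,i}})$, whence $\nu(w(T)/\Det{A}^{n-1}) \le \nu(\Det{A'_{j,i}})$; taking the maximum over the terms $T$ gives $\nu(\Det{B'_{j,i}}) \le \nu(\Det{A'_{j,i}})$. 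The main obstacle is this last bound — specifically, carrying out the degree bookkeeping so that \lemref{graph1}(iv) can be applied at each stage, and checking that the $(n-1)$-edge residue really is of adjoint type (including the degenerate case $i = j$); once that is in place, the argument runs parallel to the ones already given in the paper.
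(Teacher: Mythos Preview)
Your proposal is correct and follows essentially the same approach as the paper's own proof: the easy inequality via $B = \um_A A$ with $\um_A$ a quasi-identity, and the hard inequality by the string/digraph degree count leading to repeated application of \lemref{graph1}(iv) to peel off $n-1$ \nmulti s, leaving a term of $a'_{j,i}$. The only cosmetic difference is that the paper works with $b'_{i,j}$ versus $a'_{i,j}$ (your indices are transposed because you phrase things in terms of the $(i,j)$-entry of the adjoint rather than the $(i,j)$-minor), and the paper dispatches the easy direction in a single clause whereas you spell out the entrywise monotonicity; the substance is identical.
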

\begin{proof}  Write $ AA^\nb
A = (b_{i,j})$ and $\adj{AA^\nb A } = (b'_{i,j})$. Since   $I_A =
I + \ghost$, clearly ${b'_{i,j}}^\nu \ge {a'_{i,j}}^\nu,$ so it
suffices to prove that ${b'_{i,j}}^\nu \le {a'_{i,j}}^\nu.$ But
$b_{i,j}$ is a product of terms $\frac \rone {\Det{A}}
a_{i,k}a'_{\ell,k}a_{\ell,j}$. For any string appearing in such a
product, $i$ has out-degree 2 and all other indices have
out-degree 1; likewise, $j$ has in-degree 2 and all other indices
have in-degree 1. Thus, in computing any string for $b'_{i,j},$
which we recall is a product
$$b_{i'_1, j'_1}\cdots b_{i'_{n-1}, j'_{n-1}}$$ where $\{ i'_1,
\dots, i'_{n-1}\} = \{ 1, \dots, i-1, i+1, \dots, n\}$ and $\{
j'_1, \dots, j'_{n-1}\} = \{ 1, \dots, j-1, j+1, \dots, n\}$, we
see that the out-degree is $n-1$ for $i$, and $n$ for all other
vertices; likewise, the in-degree is $n-1$ for $j$, and $n$ for
all other vertices. Hence, by Lemma~\ref{graph1}(iv) we can
extract $n-1$ \nmulti s, each having value $\le |A|,$ and are left
with a graph of out-degree 0 for $i$ and out-degree 1 for each
other vertex, and in-degree~0 for $j$ and in-degree 1 for each
other vertex; the product of the corresponding entries of $A$ is a
summand of $a'_{i,j}$. In other words, ${b'_{i,j}}^\nu$ is a sum
of terms, each of which is   $\rone^\nu$ times ${a'_{i,j}}^\nu,$
as desired.
\end{proof}

\begin{lem}\label{adjmat2} $\Det{ AA^\nb A }^\nu =
\Det{A}^\nu,$ for any matrix $A$ over a supertropical semifield.
\end{lem}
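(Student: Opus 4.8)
The plan is to reduce the claim to an identity for $\Det{A\adj{A}A}$ and then establish that identity by the string-and-digraph technique used in the proofs of Theorems~\ref{adjeq} and~\ref{thm:idm0}. Since $A^\nb = \frac{\rone}{c}\adj{A}$ for a scalar $c$ with $c^\nu = \Det{A}^\nu$ (namely $c = \Det{A}$ when $\Det{A}$ is invertible, and the tangible element chosen in Remark~\ref{makegen} when $\Det{A}$ is a nonzero ghost), we have $AA^\nb A = \frac{\rone}{c}\,A\adj{A}A$ as matrices; because the tropical determinant is homogeneous of degree $n$ in the rows, this yields $\Det{AA^\nb A}^\nu = \Det{A\adj{A}A}^\nu / \Det{A}^{n\nu}$. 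So it suffices to prove $\Det{A\adj{A}A}^\nu = \Det{A}^{(n+1)\nu}$.

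The inequality ``$\ge$'' is quick: applying Theorem~\ref{thm:detOfProd} to the product $(A\adj{A})\,A$ and then Theorem~\ref{adjeq}(i) gives $\Det{A\adj{A}A}^\nu \ge \Det{A\adj{A}}^\nu\,\Det{A}^\nu = \Det{A}^{n\nu}\,\Det{A}^\nu$.

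For ``$\le$'' I would expand $\Det{A\adj{A}A}$ into strings of entries of $A$. The $(i,j)$-entry of $A\adj{A}A$ is $\sum_{k,\ell}a_{i,\ell}\,a'_{k,\ell}\,a_{k,j}$, so every string occurring in $\Det{A\adj{A}A}$ has the form $\prod_{i=1}^n a_{i,\ell_i}\,a'_{k_i,\ell_i}\,a_{k_i,\pi(i)}$ for some $\pi\in S_n$ and indices $\ell_i,k_i$, each factor $a'_{k_i,\ell_i}$ being further expanded via Formula~\eqref{adjform}. I would fix a string of maximal $\nu$-value and examine its digraph (counting multiplicities). The factors $a_{i,\ell_i}$ contribute one out-edge at each vertex; the factors $a_{k_i,\pi(i)}$ contribute, as $\pi$ is a permutation, one in-edge at each vertex; and, by Remark~\ref{rmk:adj0}, each expanded $a'_{k_i,\ell_i}$ contributes $n-1$ edges with out-degree $1$ at every vertex except $k_i$ and in-degree $1$ at every vertex except $\ell_i$. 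Summing over $i$, the vertices $\ell_i$ (resp.\ $k_i$) omitted by the adjoint factors are exactly compensated by the $a_{i,\ell_i}$ (resp.\ $a_{k_i,\pi(i)}$) edges, so every vertex ends up with in-degree and out-degree precisely $n+1$. Then Proposition~\ref{multicyc1} writes the digraph as a union of $n+1$ disjoint $\nmulti$s, each corresponding to a permutation and hence, by Remark~\ref{matchdet1}, carrying weight of $\nu$-value at most $\Det{A}^\nu$; the chosen string is the product of these $n+1$ weights, so it has $\nu$-value at most $\Det{A}^{(n+1)\nu}$, whence $\Det{A\adj{A}A}^\nu \le \Det{A}^{(n+1)\nu}$.

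Combining the two bounds gives $\Det{A\adj{A}A}^\nu = \Det{A}^{(n+1)\nu}$, and hence $\Det{AA^\nb A}^\nu = \Det{A}^\nu$. The one delicate point I expect is the degree computation in the third paragraph — verifying that the ``missing'' edges of the adjoint factors are filled in exactly so that Proposition~\ref{multicyc1} applies with the uniform degree $n+1$; the rest is a direct appeal to results already proved.
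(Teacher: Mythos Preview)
Your argument is correct: the degree bookkeeping indeed gives every vertex in-degree and out-degree exactly $n+1$ (for each index $i$ the three factors $a_{i,\ell_i}$, $a'_{k_i,\ell_i}$, $a_{k_i,\pi(i)}$ together contribute out-degree $[v=i]+1$ and in-degree $[v=\pi(i)]+1$ at every vertex $v$, and summing over $i$ gives $n+1$), so Proposition~\ref{multicyc1} applies and Remark~\ref{matchdet1} bounds each $n$-multicycle's weight by $\Det{A}^\nu$. Together with the lower bound from Theorems~\ref{thm:detOfProd} and~\ref{adjeq}(i) you get $\Det{A\adj{A}A}^\nu = (\Det{A}^{n+1})^\nu$, and the scalar division by $c^n$ finishes.

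This is, however, a genuinely different route from the paper. The paper does not repeat the Birkhoff--Hall decomposition at all; instead it invokes Lemma~\ref{adjmat} (that $\adj{AA^\nb A}$ and $\adj{A}$ have $\nu$-matched entries), applies Theorem~\ref{adjeq}(ii) to both sides to get $(\Det{AA^\nb A}^{n-1})^\nu = (\Det{A}^{n-1})^\nu$, and then cancels the exponent $n-1$ using that $\tG$ is an ordered group. So the paper's proof is a two-line deduction from results already in hand, while yours is a self-contained combinatorial argument that bypasses Lemma~\ref{adjmat} entirely and yields the sharper auxiliary identity $\Det{A\adj{A}A}^\nu = (\Det{A}^{n+1})^\nu$. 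Your approach has the advantage of not needing Lemma~\ref{adjmat} (or the $(n-1)$-st root cancellation), and the identity for $A\adj{A}A$ you prove is of independent interest; the paper's approach is shorter precisely because the heavy lifting was already done in Lemma~\ref{adjmat}.
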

\begin{proof} Applying Theorem \ref{adjeq} to Lemma~\ref{adjmat},
 $$\(\Det{ AA^\nb A }^{n-1}\)^\nu =  \Det{\adj{AA^\nb A }}^\nu =
 \Det{\adj{A}}^\nu = \(\Det{ A}^{n-1}\)^\nu, $$ implying $\Det{ AA^\nb A }^\nu =
\Det{A}^\nu,$ since $\tG$ is an ordered group.
\end{proof}

\begin{prop} $A A^\nb  A $ satisfies the von Neumann regularity
property, for any nonsingular matrix~$A$ over a supertropical
semifield.
\end{prop}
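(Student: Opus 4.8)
The plan is to prove the equivalent identity $B\adj{B}B = \Det{B}\,B$, where $B := A A^\nb A$. The key idea is to bring in the auxiliary triple product $B\adj{A}B$, which -- unlike $B\adj{B}B$ itself -- can be evaluated \emph{exactly} from Theorem~\ref{thm:idm0}; the conclusion is then transported to $B\adj{B}B$ using Lemmas~\ref{adjmat} and~\ref{adjmat2}, which tell us that $\adj{B}$ and $\adj{A}$ have equal $\nu$-values entrywise and that $\Det{B}^\nu = \Det{A}^\nu$.

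First I would record the needed identities. Since $A^\nb = \frac{\rone}{\Det{A}}\adj{A}$, we have $B = \frac{\rone}{\Det{A}}A\adj{A}A$, so $B\adj{A} = \frac{\rone}{\Det{A}}(A\adj{A})^2 = A\adj{A}$ by Theorem~\ref{thm:idm0}, and hence $B\adj{A}B = (A\adj{A})B = \frac{\rone}{\Det{A}}(A\adj{A})^2 A = (A\adj{A})A = \Det{A}\,B$. Moreover $\Det{B\adj{A}} = \Det{A\adj{A}} = \Det{A}^n$ by Theorem~\ref{adjeq}(i), which is tangible because $A$ is nonsingular; since $\Det{\adj{A}} = \Det{A}^{n-1}$ is also tangible, Theorem~\ref{thm:detOfProd} forces $\Det{A}^n = \Det{B}\,\Det{A}^{n-1}$, and cancelling in the semifield gives $\Det{B} = \Det{A}$. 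In particular $B$ is nonsingular, so $B^\nb$ is defined and $BB^\nb$ is a quasi-identity (Theorem~\ref{thm:idm}).

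Next I would compare the two triple products entry by entry. Write $B = (b_{i,j})$, put $b'_{i,j} = \Det{B'_{i,j}}$ (so $(\adj{B})_{i,j} = b'_{j,i}$), and similarly for $A$. Expanding, $(B\adj{B}B)_{i,\ell} = \sum_q\big(\sum_p b_{i,p}\,b'_{q,p}\big)b_{q,\ell}$ and $(B\adj{A}B)_{i,\ell} = \sum_q\big(\sum_p b_{i,p}\,a'_{q,p}\big)b_{q,\ell}$. The $q=i$ term equals $\Det{B}\,b_{i,\ell}$ in the first sum (by \eqref{det3} for $B$) and $\Det{A}\,b_{i,\ell}$ in the second (since $\sum_p b_{i,p}a'_{i,p}$ is the $(i,i)$-entry of $B\adj{A}=A\adj{A}$, namely $\Det{A}$); for $q\ne i$, the coefficient $\sum_p b_{i,p}b'_{q,p}$, resp.\ $\sum_p b_{i,p}a'_{q,p}$, is an off-diagonal entry of $B\adj{B}$, resp.\ of $B\adj{A}=A\adj{A}$, hence lies in $\tGz$ by \eqref{det3.2}. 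So $(B\adj{B}B)_{i,\ell} = \Det{B}\,b_{i,\ell}+g_{i,\ell}$ and $(B\adj{A}B)_{i,\ell} = \Det{A}\,b_{i,\ell}+h_{i,\ell}$ with $g_{i,\ell},h_{i,\ell}\in\tGz$, and $g_{i,\ell}^\nu = h_{i,\ell}^\nu$ by Lemma~\ref{adjmat} applied summand by summand. Since $B\adj{A}B = \Det{A}\,B$, the second relation forces $h_{i,\ell}^\nu \le (\Det{A}\,b_{i,\ell})^\nu$, with \emph{strict} inequality whenever $b_{i,\ell}$ is tangible (equivalently, whenever $\Det{A}\,b_{i,\ell}$ is tangible, as $\Det{A}\in\tT$); combined with $\Det{B} = \Det{A}$ this gives $g_{i,\ell}^\nu \le (\Det{B}\,b_{i,\ell})^\nu$ with the same strictness. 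Splitting into the cases $b_{i,\ell}\in\tT$ and $b_{i,\ell}\in\tGz$, one gets $\Det{B}\,b_{i,\ell}+g_{i,\ell} = \Det{B}\,b_{i,\ell}$ in each case, so $(B\adj{B}B)_{i,\ell} = \Det{B}\,b_{i,\ell}$ for all $i,\ell$, which is the assertion.

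The step I expect to be delicate is this last one -- upgrading equality of $\nu$-values to genuine equality of semiring elements -- where the tangible/ghost dichotomy must be handled with care; the exact evaluation $B\adj{A}B = \Det{A}\,B$ is precisely what makes it go through. A direct attempt to bound $g_{i,\ell}$ by extracting multicycles from its strings, in the spirit of the ``positive perspective'' remark following Example~\ref{uppertr}, would run into the same obstruction that defeats von Neumann regularity for an arbitrary matrix.
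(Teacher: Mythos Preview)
Your argument is correct. The route differs from the paper's: there one first proves the identity $I_{B} = I_A$ of quasi-identities (where $B = AA^\nb A$), using that both are $\rone$ on the diagonal and ghost off the diagonal, together with Lemma~\ref{adjmat} to match the off-diagonal $\nu$-values; the conclusion then drops out from idempotence via $BB^\nb B = I_B B = I_A(I_A A) = I_A^2 A = I_A A = B$. Your approach instead evaluates the auxiliary product $B\adj{A}B = \Det{A}\,B$ \emph{exactly} (a clean consequence of Theorem~\ref{thm:idm0}) and then transfers this to $B\adj{B}B$ entry by entry via Lemma~\ref{adjmat}. Both proofs hinge on the same lemma, but yours has the pleasant side effect of establishing $\Det{B} = \Det{A}$ on the nose (not merely in $\nu$-value), which in particular confirms that $B$ is nonsingular and that $I_B$ really is a quasi-identity --- a point the paper's argument uses but justifies only at the level of $\nu$-values via Lemma~\ref{adjmat2}.

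One small simplification: your final case split on whether $b_{i,\ell}$ is tangible or ghost is unnecessary. Since $g_{i,\ell},h_{i,\ell}\in\tGz$ and $g_{i,\ell}^\nu = h_{i,\ell}^\nu$, in fact $g_{i,\ell}=h_{i,\ell}$ (ghost elements are determined by their $\nu$-value, as $\nu$ restricts to the identity on~$\tGz$). Combined with $\Det{B}=\Det{A}$, this gives $\Det{B}\,b_{i,\ell}+g_{i,\ell} = \Det{A}\,b_{i,\ell}+h_{i,\ell} = \Det{A}\,b_{i,\ell} = \Det{B}\,b_{i,\ell}$ directly.
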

\begin{proof} First we claim that $I_{AA^\nb  A } = I_{A}.$
Indeed, since $I_{AA^\nb  A }$ and $ I_{A}$ are both
quasi-identities, it suffices to show that their respective
off-diagonal entries have the same $\nu$-values (since they are
ghost, by definition). But
$$I_{AA^\nb  A } = {AA^\nb  A }{(AA^\nb A )}^\nb = \frac 1{\Det{A}} I_A A \adj{AA^\nb
A }$$ whereas $$I_{A } = I_A^2 = \frac 1{\Det{A}} I_A A \adj{A}.$$
The claim follows when we observe that the corresponding entries
of $\adj{AA^\nb A }$ and $\adj{A}$ have the same $\nu$-values, in
view of Lemma~\ref{adjmat2}.

But now, using the fact that $I_A$ is multiplicatively idempotent,
we have
$$ (AA^\nb  A ) (AA^\nb  A )^\nb  (AA^\nb  A ) = I_{AA^\nb A
}AA^\nb  A  = I_A AA^\nb  A = I_A^2 A =  I_A A = AA^\nb A.$$
\end{proof}

Here is another application of the adjoint matrix, to be
elaborated in a follow-up paper.

\begin{rem} Suppose $\Det{ A}$ is invertible, and $v \in R^{(n)}.$
Then the equation $Aw = v + \text{ghost}$ has the solution $w =
A^\nb v.$ Indeed, writing $I_A = I + Z_\tG$ for some quasi-zero
matrix $Z_\tG,$ we have $$Aw = AA^\nb v = I_A v = (I + Z_\tG)v =
v+ \text{ghost}.$$\end{rem}

\section{The Hamilton-Cayley theorem}

\begin{defn}\label{esschar}
Define the \textbf{characteristic polynomial} $f_A$ of the matrix
$A$ to be
$$ f_A = \Det{\la I+ A},  $$
the \textbf{essential characteristic polynomial} to be the
essential part ${f_A}^{\essn}$ of the characteristic polynomial~$
f_A$, cf.~\cite[Definition 4.9]{IzhakianRowen2007SuperTropical},
and the \textbf{tangible characteristic polynomial} to be a
tangible polynomial $\hat f_A = \la ^n + \sum _{i=1}^{n} \hat \a
_i \la ^{n-i} $,  where $\hat \al_i \in \tTz$ and $\hat \al_i^\nu
= \al_i^\nu$, such that $f_A = \la ^n + \sum _{i=1}^{n} \a _i \la
^{n-i}$.
\end{defn}

 Under this notation, we see that  $\a _k \in R$ is the highest weight of the
$k$-multicycles in the reduced digraph $\grph _A$ of $A$.

Recall that the \textbf{roots} of a polynomial $f \in R[\la]$ are
those elements $a \in R$  for which $f(a)\in \tGz$.
 Thus, we say that a matrix $A$
\textbf{satisfies} a polynomial $f\in R[\la]$ if $f(A) \in
M_n(\tGz).$

\begin{thm}\label{hamilton-Cayley}(\textbf{Supertropical Hamilton-Cayley}) Any matrix $A$ satisfies both its characteristic
polynomial~ $f_A$ and its tangible characteristic polynomial $\hat
f_A$.\end{thm}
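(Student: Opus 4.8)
The plan is to imitate the classical derivation of Hamilton--Cayley through the adjoint, which over a supertropical semiring yields only the weak statement ``$f_A(A)$ is a ghost matrix plus a ghost matrix,'' and then to finish with a combinatorial argument on the weighted digraph $\grph_A$ showing that $f_A(A)$ itself is ghost.

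For the first part I would work in $M_n(R[\la])$. By Remark~\ref{detrep} (applied to columns) the off-diagonal entries of $\adj{M}\,M$ are tropical determinants of matrices with two equal columns and hence ghost, while its diagonal entries equal $\Det{M}$ (the column form of Equation~\eqref{det3}); thus $\adj{M}\,M = \Det{M}\,I + Z_M$ for a quasi-zero matrix $Z_M$. Apply this with $M = \la I + A$ and expand $\adj{\la I + A} = \sum_{k=0}^{n-1} B_k \la^k$, where $B_k \in M_n(R)$ and $B_{n-1} = I$. Matching the coefficient of $\la^k$ in $\bigl(\sum_k B_k\la^k\bigr)(\la I + A) = f_A(\la)\,I + Z(\la)$, where $f_A(\la) = \sum_k c_k\la^k$ and $Z(\la) = \sum_k Z_k\la^k$ with each $Z_k \in M_n(\tGz)$, gives $B_{k-1} + B_k A = c_k I + Z_k$ (with the convention $B_{-1} = B_n = \rzero$). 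Multiplying the $k$-th relation on the right by $A^k$ and summing over $k$, the left-hand side becomes $\sum_j B_j A^{j+1}$ \emph{twice}, hence is of the form $X + X$ and lies in $M_n(\tGz)$ by Equation~\eqref{supert}; the right-hand side is $f_A(A) + W$ with $W = \sum_k Z_k A^k \in M_n(\tGz)$ since $M_n(\tGz)$ is an ideal. So $f_A(A) + W \in M_n(\tGz)$ with $W$ ghost.

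The step I expect to be the real obstacle is that $u + w \in \tGz$ with $w \in \tGz$ does \emph{not} force $u \in \tGz$, so I still owe a direct proof that $f_A(A) \in M_n(\tGz)$. For this I would pass to $\grph_A$: since the $\la^{n-k}$-coefficient of $f_A$ is $\a_k = \sum_C w(C)$ over all $k$-multicycles $C$, and the $(i,j)$-entry of $A^m$ is $\sum_p w(p)$ over walks $p$ of length $m$ from $i$ to $j$, the $(i,j)$-entry of $f_A(A)$ is $\sum_{k,C,p} w(C)\,w(p)$ over triples with $k \in \{0,\dots,n\}$, $C$ a $k$-multicycle ($w(C) = \rone$ when $k = 0$), and $p$ a walk of length $n - k$ from $i$ to $j$; such a triple uses exactly $n$ edges. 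The combinatorial lemma to establish is: for every such triple, the multigraph $H = C \cup p$ (on $n$ vertices, with $n$ edges counted with multiplicity) admits a \emph{second} decomposition into a $k'$-multicycle and a length-$(n - k')$ walk from $i$ to $j$. This follows because a counting argument forces a branch vertex of $H$ when $i \ne j$ (otherwise every vertex would have in- and out-degree $1$, so $H$ would be a union of disjoint simple cycles, incompatible with the unbalanced endpoints of $p$), while if $i = j$ one may equally well peel the simple cycle through $i$ out of $H$ or leave it in; rerouting the walk at a branch vertex then produces the alternative, using crucially that walks, unlike multicycles, may revisit vertices and absorb cycles. Given the lemma, any triple attaining the maximal $\nu$-value in the $(i,j)$-entry either traverses a ghost edge, so that summand is already ghost, or is matched by a distinct triple of the same $\nu$-value; either way that entry is ghost, so $f_A(A) \in M_n(\tGz)$.

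The same lemma settles $\hat f_A$ at once: the $(i,j)$-entry of $\hat f_A(A) = A^n + \sum_k \hat\a_k A^{n-k}$ is $\sum_{k,p} \hat\a_k\, w(p)$, whose maximal $\nu$-value equals $\max_{k,C,p}(w(C)w(p))^\nu$ since $\hat\a_k^\nu = \a_k^\nu$, and if a maximal pair $(k_0,p_0)$ is chosen together with a maximal $k_0$-multicycle $C_0$, then applying the lemma to $C_0 \cup p_0$ produces a distinct maximal pair $(k_1,p_1)$ whose contribution --- tangible or ghost, but of equal $\nu$-value --- forces that entry into $\tGz$. In summary, the adjoint computation, the coefficient bookkeeping, the telescoping-to-a-ghost, and the passage from $\hat f_A$ to $f_A$ are all routine given the graph tools already developed (Lemma~\ref{graph1}, Proposition~\ref{multicyc1}, Remark~\ref{matchdet1}); the genuine work lies in making the branch-vertex/rerouting lemma airtight across its boundary cases ($k = 0$, $k = n$, $i = j$, and $C$ meeting $p$ in several vertices).
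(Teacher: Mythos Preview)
Your second half---the combinatorial matching on the digraph---is essentially the paper's proof, and is where all the content lies. The paper dispenses with the adjoint telescope entirely and argues directly on $\hat f_A(A)$: the $(u,v)$-entry is a sum of terms indexed by a pair (an index $i$ and a walk $p$ of length $n-i$ from $u$ to $v$); choosing an $i$-multicycle $C$ realizing $\hat\a_i^\nu$, one shows the multigraph $C\cup p$ admits a second decomposition with a \emph{strictly different} multicycle length, so the matched term lands in a different summand of $\hat f_A(A)$. The paper's case split is more concrete than your branch-vertex heuristic: either $p$ shares a vertex with some constituent simple cycle $C_1$ of $C$ (then absorb $C_1$ into $p$, shortening the multicycle), or $p$ is vertex-disjoint from $C$, in which case $p$ is confined to the $n-k$ vertices outside $C$ but has $n-k$ edges, so by pigeonhole it revisits a vertex and contains a simple cycle that can be peeled off and adjoined to $C$. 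This dichotomy handles the boundary cases ($k=0$, $k=n$, $u=v$) uniformly, and since the new $k'$ differs from $k$ the two summands are automatically distinct. The statement for $f_A$ then follows from that for $\hat f_A$ (a ghost coefficient $\a_k$ can only push the corresponding term further into $\tGz$), which is the direction opposite to yours but trivially equivalent.

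Your adjoint telescope is correct and yields $f_A(A)+W\in M_n(\tGz)$ with $W$ ghost for free, but as you yourself note this does not close the gap, and the paper makes no use of it.
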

\begin{proof} Let $B = \hat f_A(A)=A ^n + \sum \hat
\a_i A^{n-i}$. It suffices to prove that $B \in M_n(\tGz),$ i.e.,
that each entry $b_{u,v}$ is ghost. But $b_{u,v}$ is obtained as
the maximum from the various contributions $\hat \a_i A^{n-i},$
each of which is the product of weights of disjoint simple cycles
$\cyc_1, \dots, \cyc_{t(u,v)}$ in the reduced diagraph $\grph_A$
with each $\cyc_j$ of length $n_j,$ where $\sum_{j=1}^{t(u,v)} n_j
= i,$ multiplied by the weight of a path $\pth$ of $\grph_A$ of
length $n-i$. If this last path $\pth$ intersects one of the
cycles, say $\cyc_1,$ then we also have a path of length $n-i+n_1$
obtained by combining $\pth$ with $\cyc_1$, in which case
$b_{u,v}$ is matched by a term from $\a_{i-n_1}A^{n-i+n_1},$ and
thus is ghost. Thus, we may assume $\pth$ is disjoint from all the
cycles.  But this implies that the path $p$ traverses only $n-i$
vertices, which is the length of $p$, and thus $p$ must contain a
cycle $\cyc$ of some length $m \le n-i$ (by the pigeonhole
principle). But then $b_{u,v}$ is matched with a term from $\a
_{i-m} A^{n-i-m},$ and thus is ghost. (When $m=n-i,$ we have
$u=v,$ and $\pth$ itself is a cycle $\cyc_{t(u,u)+1}$, so we match
$b_{u,u}$ with a term from~$\Det{A}.$)

When all the $\hat \al_i$ contributing to $b_{u,v}$, and thus to
$B$, are $\rzero$, it means that the cycle of length $n$ is the
unique cycle of minimal length. In this case, we have $\hat f_A(A)
= A^n + \Det{ A} I$ is ghost.
\end{proof}

We digress for a moment to improve Theorem \ref{hamilton-Cayley}
slightly, by looking closely at its proof. Given a polynomial $f =
\al_n \lm^n + \cdots + \al_1 \lm + \al_0$, we define the
polynomial $\tilde f$ to be
$$ \tilde f = \hat \al_n \lm^{n-1} + \cdots + \hat\al_2 \lm +
\hat\al_1,$$ where $\hat \al_i \in \tTz$ and $\hat \al_i^\nu =
\al_i^\nu$.
\begin{thm} $\tilde  f_A(A)= \adj{A}+\ghost$, for any matrix $A$.
\end{thm}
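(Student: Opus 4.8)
The plan is to establish the identity entrywise, after translating both sides into weighted subgraphs of the reduced digraph $\grph_A$. Fix $u,v$. By Remark~\ref{rmk:adj0} the $(u,v)$-entry of $\adj{A}$ is $a'_{v,u}=\Det{A'_{v,u}}=\sum_{\pi(v)=u}\prod_{k\ne v}a_{k,\pi(k)}$; reading each such $\pi$ as a functional graph, a summand corresponds to a subgraph that is a simple path $\pth$ from $u$ to $v$ (reduced to the single vertex $u$ when $u=v$) together with disjoint simple cycles covering all the remaining vertices. Call such a subgraph a \emph{$(u,v)$-cover}; it has exactly $n-1$ edges, and $a'_{v,u}$ is a ghost exactly when two $(u,v)$-covers tie for top $\nu$-weight (or the unique top one uses a ghost edge). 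On the other side, since $f_A=\lambda^n+\alpha_1\lambda^{n-1}+\dots+\alpha_n$ we have $\tilde f_A(A)=A^{n-1}+\hat\alpha_1A^{n-2}+\dots+\hat\alpha_{n-1}I$, and by the paragraph following Definition~\ref{esschar} the element $\alpha_k$ is the top $\nu$-weight of a $k$-multicycle in $\grph_A$; hence the $(u,v)$-entry of $\tilde f_A(A)$ is the sum over $0\le k\le n-1$ of $\hat\alpha_k$ times $(A^{n-1-k})_{u,v}$, the latter being the top-weight walk of length $n-1-k$ from $u$ to $v$. Throughout we take $\hat\alpha_k=\alpha_k$ whenever $\alpha_k\in\tTz$.

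First I would prove the $\nu$-inequality $\bigl(\tilde f_A(A)\bigr)_{u,v}^{\,\nu}\ge\bigl(\adj{A}\bigr)_{u,v}^{\,\nu}$. Take a top $(u,v)$-cover whose path part $\pth$ has length $m$; its cyclic part is an $(n-1-m)$-multicycle, so $\hat\alpha_{n-1-m}^{\,\nu}=\alpha_{n-1-m}^{\,\nu}$ dominates the weight of that part while $(A^m)_{u,v}^{\,\nu}$ dominates $w(\pth)^\nu$, whence the single summand $\hat\alpha_{n-1-m}(A^m)_{u,v}$ of $\bigl(\tilde f_A(A)\bigr)_{u,v}$ already $\nu$-dominates $a'_{v,u}$. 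Since "$\,=\adj{A}+\ghost\,$" asks, for a ghost entry, only for this $\nu$-domination, this settles every $(u,v)$ for which $\bigl(\tilde f_A(A)\bigr)_{u,v}\in\tGz$ (in particular $\rzero$, where it forces $a'_{v,u}=\rzero$).

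It remains to treat $(u,v)$ with $\bigl(\tilde f_A(A)\bigr)_{u,v}\in\tT$, where exact equality $\bigl(\tilde f_A(A)\bigr)_{u,v}=a'_{v,u}$ is required. Tangibility yields a unique dominant summand $\hat\alpha_k(A^{n-1-k})_{u,v}$, itself tangible; so $\alpha_k\in\tT$ is realized by a unique $k$-multicycle $\cyc$ with tangible edges and $(A^{n-1-k})_{u,v}$ by a unique walk $\pth$ of length $n-1-k$ from $u$ to $v$ with tangible edges. Now I run the two moves from the proof of Theorem~\ref{hamilton-Cayley}. If $\pth$ met a cycle of $\cyc$ of length $n_1$, splicing that cycle into $\pth$ and deleting it from $\cyc$ produces a summand arising from $\hat\alpha_{k-n_1}A^{\,n-1-k+n_1}$ of the \emph{same} $\nu$-value — a second dominant summand, impossible; so $\pth$ is vertex-disjoint from $\cyc$. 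If $\pth$ were not simple it would (pigeonhole) contain a cycle $\cyc'$, disjoint from $\cyc$ by the previous step, and deleting $\cyc'$ from $\pth$ while adjoining it to $\cyc$ would likewise give a second dominant summand; so $\pth$ is a simple path, and having $n-1-k$ edges and being disjoint from the $k$ vertices of $\cyc$ it covers precisely the remaining $n-k$ vertices. Thus $\pth\cup\cyc$ is a $(u,v)$-cover of $\nu$-weight $\bigl(\tilde f_A(A)\bigr)_{u,v}^{\,\nu}$, and it is the unique one of maximal $\nu$-weight: any competing $(u,v)$-cover of $\nu$-weight $\ge\bigl(\tilde f_A(A)\bigr)_{u,v}^{\,\nu}$ would, via the construction of the previous paragraph, yield a second dominant summand unless its cyclic part is the (unique) top $k$-multicycle $\cyc$ and its path part the (unique) top walk $\pth$. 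Hence $a'_{v,u}$ is tangible and equals $w(\pth)\,w(\cyc)=\hat\alpha_k(A^{n-1-k})_{u,v}=\bigl(\tilde f_A(A)\bigr)_{u,v}$. The crux — and the only part needing real care — is exactly this tangible case: carrying out the splicing and pigeonhole moves so that each genuinely produces a \emph{distinct} dominant summand (keeping the multicycle length in $[0,n]$ and letting $\pth$ collapse to a point precisely when $u=v$), and then pinning down uniqueness of the resulting $(u,v)$-cover; the $\nu$-comparison and the ghost case are routine, needing only one well-chosen term.
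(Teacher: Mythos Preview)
Your overall strategy is the paper's: both interpret the summands of $\bigl(\tilde f_A(A)\bigr)_{u,v}$ as pairs (top $k$-multicycle, top walk of length $n-1-k$), interpret the summands of $a'_{v,u}$ as $(u,v)$-covers, and use the splicing and pigeonhole moves from the proof of Theorem~\ref{hamilton-Cayley} to pass between them. Your ghost/tangible case split is just a reorganisation of the paper's term-matching.

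The gap is in your tangible case, at the clause ``so $\alpha_k\in\tT$ is realized by a unique $k$-multicycle.'' Tangibility of the dominant summand $\hat\alpha_k(A^{n-1-k})_{u,v}$ only says that $\hat\alpha_k$ and $(A^{n-1-k})_{u,v}$ lie in $\tT$; but $\hat\alpha_k\in\tTz$ \emph{by definition}, irrespective of whether $\alpha_k$ itself is tangible. If $\alpha_k\in\tGz$ because two $k$-multicycles $\cyc,\cyc'$ tie for top weight, your splicing moves still force the top walk $\pth$ to be simple and vertex-disjoint from each of them, so $\cyc$ and $\cyc'$ live on the same complementary $k$ vertices; then $\pth\cup\cyc$ and $\pth\cup\cyc'$ are two distinct top $(u,v)$-covers, $a'_{v,u}$ is ghost, and the required equality $\bigl(\tilde f_A(A)\bigr)_{u,v}=a'_{v,u}$ fails. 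Concretely, over $D(\Real)$ take
\[
A=\begin{pmatrix}-\infty&0&0\\0&1&1\\0&1&1\end{pmatrix},
\]
so that $\tilde f_A=\lambda^2+1\,\lambda+2$; then $\bigl(\tilde f_A(A)\bigr)_{1,1}=0^\nu+(-\infty)+2=2\in\tT$, while $\bigl(\adj{A}\bigr)_{1,1}=a_{2,2}a_{3,3}+a_{2,3}a_{3,2}=2^\nu$, and $2\ne 2^\nu$. The paper's argument glosses over the same point (it silently treats $\hat\alpha_i$ as the weight of one particular top $i$-multicycle). What your argument \emph{does} establish in the tangible case is that the dominant summand is the weight of some $(u,v)$-cover, hence $\bigl(\tilde f_A(A)\bigr)_{u,v}^{\,\nu}=a'^{\,\nu}_{v,u}$; combined with your ghost case this yields the weaker (and correct) conclusion that $\tilde f_A(A)+\adj{A}\in M_n(\tGz)$.
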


\begin{proof} We first show that many entries of $$B = \tilde f_A(A) + \adj{A} = A ^{n-1} + \sum  \hat\a_i A^{n-i-1}
+ \adj{A}$$  are ghosts. The $(u,v)$-entry $b_{u,v}$ is obtained
as having the largest $\nu$-value from the various  $\a_i
A^{n-i-1},$ which is the product of weights of disjoint simple
cycles $\cyc_1, \dots, \cyc_{t(u,v)}$, with each $\cyc_j$ of
length $n_j,$ where $\sum_{j=1}^{t(u,v)} n_j = i,$ together with
the weight of a path $\pth$ of length $n-i-1$. If this last path
$\pth$ intersects one of the cycles, say $\cyc_1,$ then we also
have a path of length $n-i+n_1-1$ obtained by combining $\pth$
with $\cyc_1$, so we match $b_{u,v}$ with a term from $\hat
\a_{i-n_1}A^{n-i+n_1-1}.$ Thus, we have a ghost term unless $\pth$
is disjoint from all the cycles.  But this implies that $p$
traverses only $n-i-1$ vertices, which is its length. If $p$
contains a cycle $\cyc$ of some length $m \le n-i-1$,  then
$b_{u,v}$ is matched by a term from $\a _{i-m} A^{n-i-1-m},$ and
thus is ghost.

Thus, the only unmatched terms arise precisely when $p$ does not
contain any cycle. In this case, $p$ must have the form
$$a_{k_1,\pi(k_1)}a_{ k_2,\pi(k_2)}\cdots
a_{k_{m},\pi(k_{m})},$$ where $k_t \ne u$ and $\pi(k_{t}) \ne v$
for all $1 \le t\le m,$ and $\pi(k_t) = k_{t+1}$ for all $t<m$.
But combining this with the  cycles $\cyc_1, \dots, \cyc_{t(u,v)}$
give us one of the summands in Equation \eqref{adjform} of
Remark~\ref{rmk:adj0}, and conversely any such summand can be
matched with a disjoint union of simple cycles and some path of
this form. Thus, we have decomposed $\tilde  f_A(A)$ as $\adj{A}$
plus ghost terms.\end{proof}

\begin{note}\label{note:essential} Let us compare these two notions of characteristic polynomial.
The tangible characteristic polynomial shows us that the powers of
$A$ are tropically dependent (as defined in
Definition~\ref{tropdep1} below). But, as we shall see, the
characteristic polynomial is more appropriate when we work with
eigenvalues, and its essential monomials play a special role.

Note, however, that a monomial which is inessential with respect
to substitutions in~$R$, is not necessarily inessential with
respect to matrix substitutions  in $M_n(R)$. For example,
consider the polynomial $f= \lm^2 + \lm +2$;  the term $\lm$ is
inessential for substitutions in $R$ but essential for matrix
substitutions, seen by taking the matrix $A =
\vMat{-\infty}{1}{1}{0}$ in logarithmic notation. In this case,
$A^2 = \vMat{2}{1}{1}{2}$, so $f(A) = \vMat{2^\nu}{1^\nu }{1^\nu
}{2^\nu}$ is ghost, whereas ${f}^{\essn}(A) =
\vMat{2^\nu}{1}{1}{2^\nu}$ is not ghost. The theory runs more
smoothly  when the characteristic polynomial is essential.
\end{note}

\begin{note} We   conclude from Theorem \ref{hamilton-Cayley} that  any $2 \times 2$ matrix $A$
satisfies
$$ A^2 + \trace{ A}A + \Det{ A} I \ \in \ M_2(\tGz) \ . $$
\end{note}

Here is an easy but important special case of Theorem
\ref{hamilton-Cayley}.

\begin{defn}\label{lower} A matrix $A = (a_{i,j})$ is in \textbf{lower ghost-triangular form} if $a_{i,j} \in \tGz$ for each
$i>j.$ \end{defn}

Note that if $A$ is \regular \ and is in lower ghost-triangular
form, then its diagonal terms must all be tangible.

\begin{example} Any matrix $A = (a_{i,j})$ in lower ghost-triangular form satisfies
the polynomial $$f = \prod_{i=1}^n(\la + a_{i,i}).$$
  One way of seeing this is to replace  the $a_{i,j}$ by $\rzero$ for all $i>j,$
and apply  Theorem \ref{hamilton-Cayley}. Here is a direct
verification. $f(A) = (A+ a_{1,1}I)\cdots (A+ a_{n,n}I).$ In order
to get a non-ghost entry in $f(A)$, we need to multiply together
$n$ terms from the diagonal or above. However, the (1,1) position
in the first multiplicand starts with $a_{1,1}^\nu e_{1,1},$
(where $e_{i,j}$ denote the standard matrix units), so the first
factor must be $a_{i_1, j_1}e_{i_1 ,j_1}$ for $j_1 \ge 2.$ But the
(2,2) position in the second multiplicand starts with $a_{2,2}^\nu
e_{2,2},$ implying the second factor must be $a_{i_2, j_2}e_{i_2,
j_2}$ for $j_2 \ge 3.$ Continuing in this way, we see that the
$(n-1)$-factor must be $a_{i_{n-1}, j_{n-1}}e_{i_{n-1}, j_{n-1}}$
for $j_{n-1} \ge n,$ in which case the last factor must be a
ghost.
\end{example}

\section{Applications to supertropical linear algebra}

In this section, we see how \permanent s apply to vectors over  a
supertropical domain $R$. Our main objective is to characterize
singularity of a matrix $A$ in terms of tropical dependence of its
rows.

First we start with a special case, where $A$ is \ssingular, i.e.,
$\Det{ A } = \rzero.$ In view of Remark~\ref{rmk:zeroDet}, the
answer is a consequence of results in classical matrix theory, but
anyway the statement and proof in this case are rather
straightforward, so we present it here in full.

\begin{defn}\label{rdef} We say that a set $v_1, \dots, v_k$ of vectors has
\textbf{rank defect} $\ell$ if there are $\ell$ columns, which we
denote as $j_1, \dots, j_\ell$, such that $v_{i,j_u} = \rzero$ for
all $1 \le i \le k$ and $1\le u \le \ell$.
\end{defn}

For example, the vectors $(2,\rzero,2,\rzero),
(\rzero,\rzero,\rzero,2), (1,\rzero,\rzero,\rzero)$ have rank
defect 1, since they are all $\rzero$ in the second column.

\begin{prop}\label{ssing} An $n\times n$ matrix $A$ has tropical determinant $\rzero$,
iff, for some $1 \le k\le n$, $A$ has $k$ rows having rank defect
$n+1-k.$\end{prop}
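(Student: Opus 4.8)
The plan is to prove both implications via Hall's marriage theorem applied to the bipartite graph $\widetilde{\grph}_A$ attached to the digraph of $A$ as in Remark~\ref{bipart}, exploiting the observation from Remark~\ref{rmk:zeroDet} that $\Det{A} = \rzero$ precisely when every summand $a_{1,\sig(1)}\cdots a_{n,\sig(n)}$, i.e., every $n$-multicycle's contribution, is $\rzero$; equivalently, every permutation $\sig$ must send some index $i$ to a column $\sig(i)$ with $a_{i,\sig(i)} = \rzero$. For the easy direction, suppose $A$ has $k$ rows $i_1,\dots,i_k$ which are $\rzero$ in the $n+1-k$ columns $j_1,\dots,j_{n+1-k}$. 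For any $\pi \in S_n$, the $k$ values $\pi(i_1),\dots,\pi(i_k)$ cannot all avoid the set $\{j_1,\dots,j_{n+1-k}\}$ of forbidden columns, since the complement of that set has only $k-1$ elements. So some $a_{i_t,\pi(i_t)} = \rzero$, and hence that summand of $\Det{A}$ in Formula~\eqref{det2} is $\rzero$; since $\pi$ was arbitrary, $\Det{A} = \rzero$.

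For the converse, suppose $\Det{A} = \rzero$, and consider the bipartite graph $\widetilde{\grph}_A$ on $\mathcal V \cup \mathcal V'$ with an edge $(i,j')$ whenever $a_{i,j} \ne \rzero$. A full matching in $\widetilde{\grph}_A$ would be a permutation $\sig$ with all $a_{i,\sig(i)} \ne \rzero$, contradicting $\Det{A} = \rzero$; so no perfect matching exists. By the contrapositive of Hall's marriage theorem (Theorem~\ref{HMT}), there is a subset $\tS \subseteq \mathcal V$ (say, on the row side) with $|N(\tS)| < |\tS|$; write $k = |\tS|$ and note $|N(\tS)| \le k-1$. The rows indexed by $\tS$ are then $\rzero$ in every column not in $N(\tS)$, and there are at least $n - (k-1) = n+1-k$ such columns. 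Taking exactly $n+1-k$ of them gives the required $k$ rows with rank defect $n+1-k$. (One should check the degenerate case $k = n$ separately, or simply note the argument goes through: if $\tS = \mathcal V$ then $N(\tS) \subsetneq \mathcal V'$, so there is a column in which $A$ vanishes entirely, i.e., $k=n$ rows of rank defect $1$.)

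I expect the main subtlety to be purely bookkeeping: extracting "exactly $n+1-k$" columns from the "at least $n+1-k$" columns guaranteed by Hall, and making sure the definition of rank defect (Definition~\ref{rdef}) is matched on the nose — the witnessing subset from Hall's theorem may be on the row side or, after transposing, on the column side, and Remark~\ref{transpose} together with the row/column symmetry already noted in the excerpt lets us assume without loss of generality that $\tS$ indexes rows. There is no deep obstacle here; the only point requiring a little care is invoking Hall's theorem in contrapositive form, which is standard (the hypothesis $|N(\tS)| \ge |\tS|$ for all $\tS$ is equivalent to the existence of a perfect matching), and the excerpt's statement of Theorem~\ref{HMT} already packages the forward direction, so the converse is the usual deficiency-form of Hall. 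Alternatively, if one prefers to avoid invoking the converse of Hall directly, one can argue by minimality: among all permutations $\pi$, pick one minimizing the number of indices $i$ with $a_{i,\pi(i)} = \rzero$, and analyze the set of "bad" rows together with the columns reachable by augmenting paths — but the contrapositive-of-Hall route is cleaner and is surely what the authors intend given their setup in Remark~\ref{bipart} and Lemma~\ref{graph1}.
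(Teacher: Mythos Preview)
Your proof is correct in both directions, but it takes a genuinely different route from the paper's.

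The paper argues by induction on $n$ in both directions. For $(\Leftarrow)$, it expands along a suitable column and observes that each $(n-1)\times(n-1)$ minor inherits $k$ (or $k-1$) rows with the appropriate rank defect, so has determinant $\rzero$ by induction. For $(\Rightarrow)$, it picks a nonzero entry (say $a_{n,n}$), notes that the minor $A_{n,n}$ must also have determinant $\rzero$, applies induction to get $k$ rows of rank defect $n-k$ inside that minor, and then partitions $A$ as a $2\times 2$ block matrix
\[
A = \begin{pmatrix} \zero & B' \\ B'' & C \end{pmatrix}
\]
to push the defect back up to $A$ via a case split on whether $\Det{B'}$ or $\Det{B''}$ vanishes.

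Your argument instead dispatches $(\Leftarrow)$ by a one-line pigeonhole count (the $k$ images $\pi(i_1),\dots,\pi(i_k)$ cannot all avoid the $n+1-k$ zero columns), and handles $(\Rightarrow)$ by the contrapositive of Hall's theorem applied to the bipartite graph $\widetilde{\grph}_A$. This is cleaner and avoids the block-matrix case analysis; it also makes the equivalence with the classical ``no system of distinct representatives'' criterion transparent. Your bookkeeping remark about the Hall witness possibly lying on the column side is handled correctly (and in fact, as you can check, a column-side witness of size $m$ yields $k=n+1-m$ rows with defect $m=n+1-k$ directly, without needing to transpose). One small correction: your closing guess that ``this is surely what the authors intend given their setup in Remark~\ref{bipart} and Lemma~\ref{graph1}'' is off---the authors reserve Hall's theorem for the later results on $A\adj{A}$ and prove Proposition~\ref{ssing} by the elementary induction above.
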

\begin{proof} $(\Leftarrow)$ If $k=n$ then this is obvious, since
some column is entirely $\rzero$. If $n>k$, we take one of the
columns $j$ other than $j_1, \dots, j_k$ of Definition~\ref{rdef}.
Then for each $i$, the $(i,j)$-minor $A_{i,j}$ has
  at least $k-1$ rows with rank defect
$(n-1)+1-k$, so has tropical determinant $\rzero$ by induction;
hence $\Det{ A } = \rzero,$ by Formula~\eqref{det3}.

$(\Rightarrow)$ We are done if all entries of $A$ are $\rzero,$ so
assume for convenience that $a_{n,n}\ne \rzero$. Then the minor
$A_{n,n}$ has tropical determinant $\rzero$, so, by induction,
$A_{n,n}$ has $k\ge 1$ rows of rank defect $$(n-1)+1-k = n-k.$$

For notational convenience, we assume that $a_{i,j} = \rzero$ for
$1 \le i \le k$ and $1 \le j \le n-k.$ Thus, we can partition $A$
as the matrix $$ A = \( \begin{matrix} \zero & B' \\
B'' & C
\end{matrix}\) ,$$
where $\zero$ denotes the $k \times n\! -\! k$ zero matrix, $B'$
is a $k\times k$ matrix, $B''$ is an $n\!-\! k \times n\! -\! k$
matrix, and $C$ is an $n\!-\! k \times k$ matrix.

By inspection, $\Det{B' }\Det{B'' } = \Det{A } =\rzero$; hence
$\Det{B' }= \rzero$ or $\Det{B'' } = \rzero$. If $\Det{B' } =
\rzero$,
 then, by induction, $B'$ has $k'$ rows of rank defect
$k+1-k',$ so altogether, the same $k'$ rows in $A$ have rank
defect $(n-k) + k+1-k' = n+1-k',$ and we are done taking $k'$
instead of $k$.

If $\Det{B'' } = \rzero$,
 then, by induction, $B''$ has $k''$ rows of rank defect
$(n-k)+1 -k'',$ so altogether, these $k+k''$ rows in $A$ have rank
defect $n+1-(k+k''),$ and we are done, taking $k+k''$ instead of
$k$.
 \end{proof}

 Now we turn to the supertropical version, whose statement has quite a different
 flavor of linear dependence.

\begin{defn}\label{tropdep1}
Suppose $V = (R^{(n)},\tHinf, \mu)$ is a module  over a
supertropical semiring~$R$. A subset $W \subset V$ is
\textbf{tropically dependent} if there is a finite sum $\sum \a_i
w_i \in \tHinf$, with each $\a_i \in \tTz$, but not all of them
$\rzero$; otherwise $W\subset V$ is called \textbf{tropically
independent}.
\end{defn}

\begin{thm}\label{dep2}(See \cite[Corollary 3.3]{zur05TropicalRank} and \cite[Theorem 2.6]{IzhakianRowen2009TropicalRank}) If vectors
$v_1, \dots, v_n \in R^{(n)}$ are tropically dependent, for $R$ a
supertropical domain, then $\Det{v_1, \dots, v_n} \in \tGz$.
\end{thm}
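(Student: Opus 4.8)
The plan is to prove the statement by contradiction: assume the $v_i$ are tropically dependent \emph{and} that $\Det{v_1,\dots,v_n}$ is tangible, and extract from the dependence relation a permutation that competes with the one attaining the tropical determinant. Let $A=(a_{i,j})$ be the $n\times n$ matrix whose $i$-th row is $v_i$, so $\Det{v_1,\dots,v_n}=\Det{A}$. Assuming $\Det{A}\in\tT$, Remark~\ref{rmk:zeroDet} supplies a \emph{unique} $\sigma\in S_n$ attaining $\Det{A}$, tells us that each $a_{i,\sigma(i)}$ is tangible, and that $(a_{1,\pi(1)}\cdots a_{n,\pi(n)})^{\nu}<\Det{A}^{\nu}$ for every $\pi\ne\sigma$. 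Simultaneously permuting the coordinates of all the $v_i$ changes neither $\Det{v_1,\dots,v_n}$ (Formula~\eqref{det2} runs over all of $S_n$) nor their tropical dependence, so I may assume $\sigma=\operatorname{id}$; thus $\Det{A}=a_{1,1}\cdots a_{n,n}$, all $a_{i,i}\in\tT$, and the diagonal product strictly $\nu$-dominates every other summand of~\eqref{det2}.

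Next I would feed in the dependence relation $\sum_{i\in S}\alpha_i v_i\in\tGz^{(n)}$, where $S=\{\,i:\alpha_i\ne\rzero\,\}$ is nonempty and each $\alpha_i$ with $i\in S$ is tangible. Fixing $j\in S$ and reading off the $j$-th coordinate gives $\sum_{i\in S}\alpha_i a_{i,j}\in\tGz$. In this sum the summand $\alpha_j a_{j,j}$ is a product of two nonzero tangible elements of the domain $R$, hence tangible; but if a tangible summand strictly $\nu$-dominated all the others the whole supertropical sum would be tangible. Therefore there is an index $g(j)\in S\setminus\{j\}$ with
$$\bigl(\alpha_{g(j)}\,a_{g(j),j}\bigr)^{\nu}\ \ge\ \bigl(\alpha_j\,a_{j,j}\bigr)^{\nu}.$$

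The map $g\colon S\to S$ has no fixed point, so iterating it produces a cycle $j_1\to j_2\to\cdots\to j_r\to j_1$ inside $S$ with $r\ge 2$ (set $j_{r+1}=j_1$). Multiplying the $r$ inequalities $(\alpha_{j_{t+1}}a_{j_{t+1},j_t})^{\nu}\ge(\alpha_{j_t}a_{j_t,j_t})^{\nu}$ together, the factor $\prod_{t=1}^{r}\alpha_{j_t}^{\nu}$ occurs on both sides and may be cancelled — this is where the hypothesis that $R$ is a supertropical domain enters, cf.~\cite[Remark~3.11]{IzhakianRowen2007SuperTropical} — leaving $\prod_{t=1}^{r}a_{j_{t+1},j_t}^{\nu}\ge\prod_{t=1}^{r}a_{j_t,j_t}^{\nu}$. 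Now let $\pi\in S_n$ be the permutation acting as the $r$-cycle $j_{t+1}\mapsto j_t$ on $\{j_1,\dots,j_r\}$ and as the identity elsewhere; then $\pi\ne\operatorname{id}$ and
$$\bigl(a_{1,\pi(1)}\cdots a_{n,\pi(n)}\bigr)^{\nu}=\Bigl(\prod_{i\notin\{j_1,\dots,j_r\}}a_{i,i}^{\nu}\Bigr)\prod_{t=1}^{r}a_{j_{t+1},j_t}^{\nu}\ \ge\ \prod_{i=1}^{n}a_{i,i}^{\nu}=\Det{A}^{\nu},$$
so $\pi$ also attains $\Det{A}$, contradicting the uniqueness of $\sigma=\operatorname{id}$. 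Hence $\Det{v_1,\dots,v_n}\in\tGz$.

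The delicate point, and the main obstacle, is organizing the weak coordinatewise information ``$\sum\alpha_iv_i$ is ghost'' into a single genuine competing permutation: one obtains one domination inequality per column, must assemble those along a cycle of the functional graph of $g$, and then recombine the cycle into an honest element of $S_n$. The cancellation of $\prod_t\alpha_{j_t}^{\nu}$ is exactly the place where ``domain'' is indispensable; over a general supertropical semiring the statement can fail. I expect the tempting alternative via multilinearity of $\Phi$ — substituting $w=\sum\alpha_iv_i$ into one slot, which yields only $\alpha_k\Det{v_1,\dots,v_n}+\ghost\in\tGz$ — to stall, since the residual ghost term cannot be bounded without essentially the cycle analysis above.
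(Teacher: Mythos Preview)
Your proof is correct and follows essentially the same strategy as the paper's: assume $\Det{A}$ is tangible and uniquely attained by the identity, use the ghost dependence column-by-column to locate off-diagonal entries that dominate the diagonal ones, and extract from these a nontrivial cycle yielding a second permutation attaining $\Det{A}$. Your version is slightly more streamlined---working directly on the support $S$ (so no inductive reduction to the case where all $\alpha_i\ne\rzero$ is needed) and phrasing the cycle extraction via iterating the choice function $g$ rather than via the digraph of ``critical'' entries after absorbing the $\alpha_i$ into the rows---but these are cosmetic variants of the same argument.
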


\begin{proof}
Our proof follows the lines of \cite[Theorem
 2.6]{IzhakianRowen2009TropicalRank}. Let $A$ be the matrix whose $i$-th row is $v_i$.
Thus, writing $v_i = (a_{i,1}, \dots, a_{i,n}),$ we have $A =
(a_{i,j})$.
 We need to prove that $\Det{A}$ is ghost, so for the
remainder of the proof, we assume on the contrary that $\Det{A}$
is tangible, and aim for a contradiction.

 Rearranging the rows and
columns does not affect linear dependence of the rows, so we may
assume that $\Det{A}$ is attained  by the identity permutation,
i.e., $\Det{A} = a_{1,1}\cdots a_{n,n},$ and  is not attained by
any other permutation.

 We are given some dependence $\sum \a _i v_i \in
\tHinf$. First assume that $\a_n = \rzero;$ i.e.,
$\sum_{i=1}^{n-1} \a _i v_i \in \tHinf$. If we erase say the $j$
column of the $v_i$'s, we are left with the minor $A'_{n-1,j}$
whose rows clearly satisfy the same dependence then  by induction,
its tropical determinant $a'_{n-1,j} \in \tGz ,$ so $$|A| = \sum
_{j=1}^n a_{n-1,j} a'_{n-1,j}\in \tGz ,$$ and we are done. Thus,
we may assume that every $\a_n \ne \rzero.$

 Replacing $v_i$
by $\a _i v_i $ for $1\le i \le n,$ with
 $\al_i$ tangible, we may assume that \begin{equation}\label{sumg} \sum  v_i \in \tHinf \ .\end{equation}

We say $a_{i,j}\in A$ is (column) \textbf{critical} if
$a_{i,j}^\nu \ge a_{i',j}^\nu$ for each $1 \le i' \le n;$ in other
words, if $a_{i,j}$ dominates all entries in the $j$ column of
$A$.  Note that for this particular matrix $A$, any critical entry
is either
 ghost, or is matched by another critical entry in the same
 column.

Let $\grph_A$ denote reduced digraph of $A$,   let $\grph'$ denote
the sub-digraph of edges corresponding to critical entries, and
let $\grph''$ denote the sub-digraph of $\grph'$ after we erase
all the loops of $\grph'$. (The loops correspond to critical
diagonal elements $a_{i,i}$.)

 Note that if some $a_{i,i}\in \tGz$ then
$\Det{A} \in \tGz$, and we are done. Thus, any critical diagonal
entry must be tangible, and thus must be matched by another
critical entry in the same column. It follows that  $\grph''$ has
in-degree $\ge 1$ in each vertex, so Remark \ref{rmk:graph}
implies that
 $\grph''$ contains a cycle (which by definition of $\grph''$ is not a loop);
 this corresponds to $$a_{i_1, i_2}\cdots a_{i_{k-1},i_k} a_{i_k,i_1}$$
where each entry  is critical. Defining the permutation $\pi$ by
$\pi (i_1) = i_2, \dots, \pi(i_k) = i_1$ and the identity
elsewhere, it is clear that $a_{i_1, i_1}\cdots
a_{i_{k-1},i_{k-1}} a_{i_k,i_k}$ is dominated by $a_{i_1,
i_2}\cdots a_{i_{k-1},i_k} a_{i_k,i_1}$, and thus $\Det{A}^\nu$ is
also attained by $\pi,$ contrary to $\Det{A} \in \tT.$
\end{proof}

We look for the converse of Theorem \ref{dep2}.

\begin{thm}\label{thm:base}(See \cite[Corollary 3.3]{zur05TropicalRank}
and \cite[Theorem 2.10]{IzhakianRowen2009TropicalRank})  Suppose
$R$ is a supertropical domain. Vectors $v_1, \dots, v_n \in
R^{(n)}$ are tropically dependent, iff $\Det{ A } \in \tGz$, where
$A$ is the matrix whose rows are $v_1, \dots, v_n$. Furthermore,
we explicitly display the tropical dependence in the proof.
\end{thm}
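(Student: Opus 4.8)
The plan is to prove the nontrivial direction: if $\Det{A} \in \tGz$, then the rows $v_1, \dots, v_n$ are tropically dependent, and moreover to exhibit the dependence explicitly. The starting point is to reduce to the case where $\Det{A}$ is a nonzero ghost, since the case $\Det{A} = \rzero$ is already handled by Proposition~\ref{ssing}: there one finds $k$ rows with rank defect $n+1-k$, and summing those $k$ rows (with coefficients $\rone$) gives a vector which is $\rzero$ in at least $n+1-k$ coordinates, hence — a short counting argument on the supports shows — actually forces a genuine tangible dependence landing in $\tHinf$, after possibly a further application of the proposition to the submatrix of nonzero columns. So assume henceforth $\Det{A}$ is a nonzero ghost.

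Next I would normalize: after permuting rows and columns (which affects neither tropical dependence of the rows nor the ghostness of $\Det{A}$) I may assume $\Det{A}$ is attained by the identity permutation, so $\Det{A}^\nu = (a_{1,1}\cdots a_{n,n})^\nu$, and it is attained as a ghost. By Remark~\ref{rmk:zeroDet}, either some diagonal entry $a_{i,i}$ is itself ghost, or at least two permutations attain $\Det{A}$. In the first case one can build the dependence directly from the adjoint: the relation $\Det{A} = \sum_j a_{i,j} a'_{i,j}$ from Equation~\eqref{det3}, together with $\sum_j a_{k,j} a'_{i,j} \in \tGz$ for $k \ne i$ from Equation~\eqref{det3.2} (using tangible representatives $\hat a'_{i,j}$ of the minors, as licensed by Remark~\ref{rmk:adj}(ii) via Remark~\ref{detrep}), says exactly that $\sum_j \hat a'_{i,j}\, v_j^{\operatorname{col}} \in \tHinf$ — i.e.\ the $i$-th row of the adjoint is a tangible witness to dependence of the columns of $A$; transposing (Remark~\ref{transpose}) gives the dependence of the rows. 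This is the same mechanism used to show $\um_A$ is ghost off the diagonal, now read as a dependence statement.

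The substantive case is when $\Det{A}$ is attained by the identity and by some other permutation $\sigma \ne (1)$. Writing $\sigma$ as a product of disjoint cycles, at least one nontrivial cycle $(i_1\, i_2\, \cdots\, i_m)$ has $a_{i_1,i_2}\cdots a_{i_m,i_1}$ of $\nu$-value equal to $a_{i_1,i_1}\cdots a_{i_m,i_m}$; this is precisely the configuration exploited at the end of the proof of Theorem~\ref{dep2}. The dependence I want to display is obtained by taking, for a suitable choice of $m$ of the rows, tangible coefficients built from ratios of the off-diagonal cycle entries to the diagonal entries, so that in every column the two competing maximal contributions (the ``diagonal'' one and the ``cyclic'' one) tie and produce a ghost. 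Concretely: scale $v_{i_t}$ by the tangible element $\hat\a_{i_t}$ chosen so that $\hat\a_{i_t}\, a_{i_t,i_t}$ and $\hat\a_{i_t}\, a_{i_t,i_{t+1}}$ both have the correct $\nu$-value relative to the global determinant, and set all other coefficients so that their rows contribute nothing dominant to the relevant columns (possible after the normalization that the identity uniquely-or-not attains the determinant). Then in columns $i_1, \dots, i_m$ the sum $\sum \hat\a_i v_i$ has two matched maximal summands, hence is ghost there, and in every other column the diagonal term dominates and is a single tangible summand — but that single summand must be matched by something, which is exactly where I will need to iterate or invoke the minor/adjoint identities again to absorb the remaining columns.

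The main obstacle is precisely controlling the columns \emph{outside} the cycle $\{i_1,\dots,i_m\}$: a naive choice of coefficients makes the sum ghost in the $m$ ``cyclic'' columns but possibly tangible elsewhere. The clean way around this, and the organizing principle of the whole proof, is induction on $n$ combined with the minor formula \eqref{det3}: having located a nontrivial critical cycle, delete one column $j$ not on the cycle; the minor $A'_{?,j}$ still has its determinant attained twice (the cycle survives), so by induction its rows carry a displayed tangible dependence, and expanding $\Det{A}$ along column $j$ via \eqref{det3}, \eqref{det3.2} propagates that dependence back up to $A$ — the coefficients for $A$ being those for the minor, with the deleted row assigned the appropriate adjoint-minor coefficient. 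Thus the real work is bookkeeping: verifying at each inductive step that the displayed coefficients remain tangible, not all zero, and that the resulting combination lands in $\tHinf$; the graph-theoretic lemmas (Remark~\ref{rmk:graph}, Lemma~\ref{graph1}) and the critical-entry analysis from the proof of Theorem~\ref{dep2} supply exactly the combinatorial facts needed to guarantee a nontrivial cycle exists to anchor the induction.
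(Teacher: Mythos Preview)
Your outline contains the right ingredients but splits the argument in a way that obscures the paper's key unification, and this leads you into an unnecessarily complicated inductive scheme for the ``two permutations'' case.

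In the paper's proof, \emph{both} cases (a ghost diagonal entry, or two permutations attaining $\Det{A}$) are handled by the \emph{same} explicit dependence: after arranging that the identity attains $\Det{A}$ and that column~$1$ is ``implicated'' (either $a_{1,1}\in\tGz$, or some attaining $\pi$ moves~$1$), one takes $\beta_i\in\tTz$ with $\beta_i^\nu = \Det{A_{i,1}}^\nu$ and checks that $\sum_i \beta_i v_i\in\tHinf$. Column~$1$ of this sum is ghost because $\sum_i \beta_i a_{i,1}$ has $\nu$-value $\Det{A}^\nu$ and either the dominant summand is ghost (Case~II) or two summands match (Case~I); every other column is ghost by the ``repeated column'' identity~\eqref{det3.23}. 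You recognize this mechanism in your ``diagonal ghost'' case, but then abandon it for the other case, reaching instead for cycle-based scalings that you yourself note fail to control the off-cycle columns without a further induction.

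There is also a genuine gap in your treatment of $\Det{A}=\rzero$. Summing the $k$ rank-defective rows with coefficients $\rone$ gives a vector that is $\rzero$ in $n+1-k$ columns but need not be ghost in the remaining $k-1$ columns; no ``short counting argument on the supports'' will fix this. The paper does not try to extract a dependence directly from Proposition~\ref{ssing}; rather, the zero-determinant situation arises only as the degenerate subcase of Case~I where all the $\beta_i$ vanish, and is then dispatched by induction on $n$ (deleting the first column and using that the minor $A_{1,1}$ also has $\Det{A_{1,1}}=\rzero$). Your proposal would need a similar inductive step here, not the ad hoc argument you sketch.
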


\begin{proof}
\noindent $(\Rightarrow)$ By Theorem \ref{dep2}.

${ \bf (\Leftarrow)}$ Assuming that $A$ is singular, we need to
prove that the rows of $A$ are tropically dependent. Arguing  by
induction $n$,  we assume that the theorem is true for $(n-1)$,
the case for $n =1$ being obvious.

Rearranging the rows and   columns of $A$, we
 assume henceforth that the identity permutation $\pi = (1)$
 attains $\Det{A}.$  Note that this hypothesis is
not affected by multiplying through any row by a given tangible
element, which we do repeatedly throughout the proof.

 Let
$$\gm _\pi = v_{ \pi(1),1 } \cdots v_{ \pi(n),n }$$
for each permutation $\pi$ of $\{1, \dots, n \}$, and let
$$\gm  = \gm_{(1)} = v_{1,1} \cdots v_{n,n}.$$
 Thus $\gm^\nu = |A|^\nu = |A|.$

\textbf{Case I:} $\gm ^\nu = \gm _\pi ^\nu$ for some permutation
$\pi \ne (1).$ Thus, $\pi (i_0) \ne i_0$ for some $i_0$; for
notational convenience, we assume that $\pi(1) \ne 1.$ Take
$\beta_i\in \tTz$ of the same $\nu$-value as the tropical
determinant $|A_{i,1}|$ of the minor $A_{i,1}$. Then $\sum_{i=1}^n
\beta _i a_{i,1}$ has the same
 $\nu$-value as $\sum |A_{i,1}|a_{i,1} = |A|,$ but is ghost
 since, by hypothesis, there are two leading summands in the determinant formula that
 match. Hence,  $\sum_{i=1}^n \beta _i a_{i,1}   \in \tGz.$
 On the other hand, for every $j \ne 1,$ $\sum_{i=1}^n \beta _i
 a_{i,j}$ is the tropical determinant of a matrix having two columns with
 the same $\nu$-values, so is   in $\tGz$ by Equation~\eqref{det3.23}.
Thus, we are done unless all $\beta_i = \rzero.$ In this case
$\gamma
 =\rzero,$ so in view of Proposition~\ref{ssing}, there is $k$ for which
 $A$ has  $k$ rows with rank defect $n+1-k$. We need to conclude that
 these $k$ rows are tropically
 dependent. By induction on $n$, we may assume that $n=k+1$, and
 that the first entry of each row is $\rzero.$ If  $|A_{1,1}| \ne \rzero,$ we
 are done by the above argument.  If $|A_{1,1}| = \zero,$ we see by induction that $v_2,
 \dots, v_n$ are tropically dependent.

\textbf{Case II:} $\gm ^\nu > \gm _\pi ^\nu$ for each permutation
$\pi \ne (1).$ Thus $\gamma = |A|\in \tGz$, so some $a_{i,i}\in
\tGz$; renumbering the indices, we may assume that $a_{1,1}\in
\tGz.$ As in Case I, take $\beta_i\in \tTz$ of the same
$\nu$-value as $|A_{i,1}|$.  Then $\sum_{i=1}^n \beta _i a_{i,1}$
has the same
 $\nu$-value as $\sum |A_{i,1}|a_{i,1} = |A|,$ but is ghost
 since by hypothesis $a_{1,1}\in
\tGz.$ Again, by Equation~\eqref{det3.23},
 $\sum_{i=1}^n \beta _i a_{i,j}   \in \tGz,$ for all
$j \ne 1.$ Thus, $\sum_{i=1}^n \beta _i v_i   \in \tHinf,$ as
desired.
\end{proof}

\begin{cor}\label{coldep} (See \cite[Corollary 3.3]{zur05TropicalRank} and \cite[Theorem 3.4]{IzhakianRowen2009TropicalRank}) The matrix $A\in M_n(R)$
over a supertropical domain $R$  is \regular \ iff the rows of $A$
are tropically independent, iff the columns of $A$ are tropically
independent.
\end{cor}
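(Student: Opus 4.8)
The plan is to read this corollary off directly from \thmref{thm:base} together with \remref{transpose}, with essentially no additional work. The statement asserts a three-way equivalence, and it suffices to prove each of the outer two conditions equivalent to the middle one (``$A$ is \regular'').

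First I would record the elementary set-theoretic observation underlying the whole argument: since $R$ is the disjoint union of $\tT$ and $\tGz$, a matrix $A$ is \regular\ precisely when $\Det{A} \notin \tGz$. Thus the chain of equivalences to be established is merely the contrapositive form of the biconditional in \thmref{thm:base}.

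Next, applying \thmref{thm:base} to the matrix $A$ whose rows are $v_1, \dots, v_n$: its rows are tropically dependent iff $\Det{A} \in \tGz$; negating both sides and using the observation above, the rows of $A$ are tropically independent iff $A$ is \regular. For the statement about columns I would apply \thmref{thm:base} instead to the transpose $A^{\trn}$, whose rows are exactly the columns of $A$; combined with the identity $\Det{A^{\trn}} = \Det{A}$ of \remref{transpose}, this yields that the columns of $A$ are tropically dependent iff $\Det{A} \in \tGz$, hence tropically independent iff $A$ is \regular. Chaining the two equivalences through their common middle term gives all three.

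I do not expect any genuine obstacle here, since the corollary is a direct packaging of earlier results; the only point meriting a word of care is confirming that \thmref{thm:base}, phrased for the matrix assembled from a prescribed list of row vectors, applies verbatim to the list of column vectors of $A$ once one passes to $A^{\trn}$, and that transposition preserves the tropical determinant, which is exactly \remref{transpose}.
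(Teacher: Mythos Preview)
Your proposal is correct and matches the paper's own argument essentially verbatim: the paper simply says to apply \thmref{thm:base} to $\Det{A}$ and to $\Det{A^{\trn}}$, which are equal by \remref{transpose}. Your added remark that \regular ity is by definition the negation of $\Det{A}\in\tGz$, so that one is just taking contrapositives, is an accurate gloss on the one-line proof.
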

\begin{proof} Apply the theorem to $\Det{A}$ and  $\Det{A^t}$,
which are the same.
\end{proof}

\begin{cor} Any $n+1$ vectors in $R^{(n)}$ are tropically
dependent. \end{cor}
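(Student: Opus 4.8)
The plan is to obtain this as a consequence of \thmref{thm:base}, by enlarging the ambient space from $R^{(n)}$ to $R^{(n+1)}$. Given vectors $v_1, \dots, v_{n+1} \in R^{(n)}$, I would append to each a new $(n+1)$-st coordinate equal to $\rzero$, obtaining $\tilde v_j = (v_{j,1}, \dots, v_{j,n}, \rzero) \in R^{(n+1)}$, and form the matrix $M \in M_{n+1}(R)$ whose $j$-th row is $\tilde v_j$. Since $M$ has a column identically $\rzero$, every product $m_{1,\sigma(1)} \cdots m_{n+1,\sigma(n+1)}$ appearing in $\Det{M}$ contains a $\rzero$ factor, so $\Det{M} = \rzero$; in particular $M$ is singular.

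I would then apply \thmref{thm:base} in dimension $n+1$ (the theorem is valid for square matrices of arbitrary size over the supertropical domain $R$): since $\Det{M} \in \tGz$, the rows $\tilde v_1, \dots, \tilde v_{n+1}$ of $M$ are tropically dependent in $R^{(n+1)}$, so there exist $\beta_1, \dots, \beta_{n+1} \in \tTz$, not all $\rzero$, with $\sum_{j=1}^{n+1} \beta_j \tilde v_j \in \tGz^{(n+1)}$. Finally, the coordinate projection $\rho : R^{(n+1)} \to R^{(n)}$ onto the first $n$ coordinates is an $R$-module homomorphism carrying $\tGz^{(n+1)}$ into $\tGz^{(n)} = \tHinf$ and sending $\tilde v_j$ to $v_j$; applying $\rho$ to the relation above gives $\sum_{j=1}^{n+1} \beta_j v_j \in \tHinf$, still with $\beta_1, \dots, \beta_{n+1} \in \tTz$ not all $\rzero$. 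Hence $v_1, \dots, v_{n+1}$ are tropically dependent.

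I do not anticipate a genuine obstacle here: the one idea needed is to pass to an $(n+1) \times (n+1)$ matrix with a redundant column, and everything after that is routine bookkeeping — that adjoining a zero column forces $\Det{M} = \rzero$, and that a coordinate projection preserves both membership in the ghost submodule and the nontriviality of the coefficients (which are left untouched). If one prefers not to adjoin a zero column, one may instead duplicate any one of the columns of the $v_j$, which also forces $\Det{M} \in \tGz$ by \thmref{formula1}(2) together with \remref{transpose}. And if one wishes to avoid invoking \thmref{thm:base} altogether, the dependence can be written down explicitly by applying the cofactor identity~\eqref{det3.23} of \remref{rmk:adj} to such an $(n+1) \times (n+1)$ matrix, taking each coefficient $\beta_j$ to be a tangible element whose $\nu$-value matches that of the corresponding $n \times n$ minor determinant.
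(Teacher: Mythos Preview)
Your proposal is correct and follows essentially the same approach as the paper: adjoin a column of $\rzero$'s to obtain an $(n+1)\times(n+1)$ matrix that is strictly singular, and apply Theorem~\ref{thm:base}. The paper's proof is terser (it does not spell out the projection step or the alternatives you mention), but the idea is identical.
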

\begin{proof} Expand their matrix to an $(n+1) \, \times \,(n+1)$
matrix $A$ by adding a column of zeroes at the beginning;
obviously $A$ is \ssingular, so its rows are tropically dependent.
\end{proof}

As pointed out in \cite[Observation 2.6]{zur05TropicalAlgebra},
and  as we have seen in Example \ref{2by2} above, the square of a
\regular \ matrix $A$ need not be \regular.

\subsection{The Vandermonde matrix}

One way of applying this method is by means of a version of the
celebrated Vandermonde argument. Given $a_1, \dots, a_n$ in $R$,
define the \textbf{Vandermonde matrix} $A$ to be the $n \times n$
matrix $(a_{i,j})$, where $a_{i,j} = a_i^{j-1}$ and $a_i^{0} =
\rone$. Recall from \cite[Lemma
7.58]{IzhakianRowen2007SuperTropical} that its tropical \tdet \ is
\begin{equation}\label{eq:Van} \Det{ A} =
\prod_{i\neq j } (a_i +a_j).
\end{equation}
\begin{remark}
Assume that $A $  is a Vandermonde matrix $(a_i^{j-1})$ with
respect to distinct $a_1, \dots, a_n$. By Formula~\Ref{eq:Van}, we
see that if all the $a_i$ are tangible, or if the only $a_i$ which
is ghost
 is the $a_i$
 of smallest $\nu$-value, then $A$~is \regular ; otherwise
$A$ is singular.
\end{remark}

\begin{lem}\label{iso2} If $A \in M_n(R)$  and  $v$ is
 a tangible vector such that $Av$ is a ghost vector,
then the matrix $A$ is singular.
\end{lem}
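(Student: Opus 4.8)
The plan is to recognize the hypothesis as nothing but a tropical dependence among the columns of $A$, and then invoke Theorem~\ref{dep2} (via the transpose). Write $A = (a_{i,j})$ and $v = (v_1, \dots, v_n)$, and let $c_1, \dots, c_n \in R^{(n)}$ be the columns of $A$. Then $Av = \sum_{j=1}^n v_j c_j$, so the assumption that $Av$ is a ghost vector says exactly that $\sum_{j=1}^n v_j c_j \in \tHinf$ (the ghost submodule $\tGz^{(n)}$ of $R^{(n)}$). Since $v$ is a tangible vector, each $v_j \in \tTz$ and not all of the $v_j$ are $\rzero$; this is precisely the condition of Definition~\ref{tropdep1}, so the columns $c_1, \dots, c_n$ are tropically dependent.

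Next I would pass to the transpose: the matrix $A^{\trn} = (a_{j,i})$ has rows $c_1, \dots, c_n$, which we have just shown to be tropically dependent. By Theorem~\ref{dep2} (applicable since $R$ is a supertropical domain, by our standing assumption), this forces $\Det{A^{\trn}} \in \tGz$. By Remark~\ref{transpose}, $\Det{A^{\trn}} = \Det{A}$, so $\Det{A} \in \tGz$; that is, $A$ is singular, as claimed.

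There is essentially no obstacle here beyond bookkeeping: the only things to check are that ``tangible vector'' in the sense used for $R^{(n)}$ matches the hypothesis ``each $\a_i \in \tTz$, not all $\rzero$'' of Definition~\ref{tropdep1}, and that one may transfer the statement from columns to rows through $\Det{A^{\trn}} = \Det{A}$. Both are immediate, so the lemma is a direct corollary of Theorem~\ref{dep2}; one could equally cite the ($\Leftarrow$)-free direction of Corollary~\ref{coldep}.
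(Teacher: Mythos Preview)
Your proof is correct and follows essentially the same route as the paper: the paper's proof is the one-line observation that the columns of $A$ are tropically dependent and then an appeal to Corollary~\ref{coldep}, which is exactly the combination of Theorem~\ref{dep2} with the transpose identity $\Det{A^{\trn}} = \Det{A}$ that you have spelled out in detail. Your final remark about citing Corollary~\ref{coldep} is precisely what the paper does.
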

\begin{proof} The columns of $A$ are tropically dependent, so $A$
is singular by Corollary~\ref{coldep}.
\end{proof}
\begin{thm}\label{Van1} Suppose $v = (\gamma _1, \dots, \gamma _n) \in R^{(n)}$ for
$R = \RGnu$ a supertropical domain,  and suppose $\sum_{j=1}^n
a_i^j \gamma _j \in \tGz ^{(n)}$ for each $i = 1, \dots, n,$ where
$a_1, \dots, a_n$ are tangible.  Then some $\gamma _j$ is ghost.
\end{thm}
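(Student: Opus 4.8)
The plan is to repackage the $n$ scalar conditions as one matrix equation and then invoke the singularity criterion for a matrix having a tangible vector in its kernel. First I would form the $n\times n$ matrix $B=(b_{i,j})$ with $b_{i,j}=a_i^{\,j}$; the hypotheses $\sum_{j=1}^n a_i^{\,j}\gamma_j\in\tGz$ for $i=1,\dots,n$ say precisely that $Bv\in\tGz^{(n)}$, i.e.\ that $Bv$ is a ghost vector. Arguing by contradiction, I would suppose that no $\gamma_j$ is ghost, so that each $\gamma_j\in\tT$ and hence $v$ is a tangible vector (the alternative $v=\zero$ being vacuous); then Lemma~\ref{iso2} forces $B$ to be singular, and it remains to contradict this by showing that $B$ is \regular.

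To that end, I would observe that $B=D\,V$, where $D=\diag\{a_1,\dots,a_n\}$ and $V=(a_i^{\,j-1})$ is the Vandermonde matrix on $a_1,\dots,a_n$, since the $i$-th row of $B$ equals $a_i$ times the $i$-th row of $V$. Extracting the scalar $a_i$ from the $i$-th row, one row at a time, by multilinearity of the tropical determinant (Theorem~\ref{formula1}(1)), gives the exact identity $\Det{B}=(a_1\cdots a_n)\,\Det{V}$, and $\Det{V}=\prod_{i\ne j}(a_i+a_j)$ by Formula~\eqref{eq:Van}. Both factors on the right are tangible: $a_1\cdots a_n\in\tT$ as a product of tangibles in the monoid $\tT$, while $\Det{V}\in\tT$ because, the $a_i$ being distinct tangibles, each $a_i+a_j$ lies in $\{a_i,a_j\}\subseteq\tT$. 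Hence $\Det{B}\in\tT$, so $B$ is \regular, giving the contradiction; therefore some $\gamma_j$ is ghost.

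The step I expect to be the crux is the nonsingularity of $B$, which is the only place where distinctness of the $a_i$ is used, and it cannot be dispensed with. What is actually needed is that the $a_i$ have pairwise distinct $\nu$-values (automatic for distinct $a_i$ over the motivating semiring $D(\Real)$), so that the factors $a_i+a_j$ of \eqref{eq:Van} stay tangible rather than collapse to ghosts; for instance, with $n=2$ and $a_1=a_2=a$ the vector $v=(a,\rone)$ is tangible yet $a\gamma_1+a^2\gamma_2=(a^2)^\nu$ is a ghost. Everything else is routine: the reduction ``$v$ tangible $\Rightarrow$ $B$ singular'' is exactly Lemma~\ref{iso2} (equivalently Corollary~\ref{coldep} applied to the columns of $B$), and the determinant factorization is immediate from multilinearity.
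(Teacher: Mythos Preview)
Your argument is correct and is essentially the paper's own proof, only written out more carefully. The paper simply says: let $A$ be the Vandermonde matrix $(a_i^{\,j-1})$, note that $Av$ is ghost, and invoke Lemma~\ref{iso2}. You make explicit the step the paper suppresses, namely that the hypothesis literally gives $Bv$ ghost for $B=(a_i^{\,j})=DA$ with $D=\diag\{a_1,\dots,a_n\}$, and that multiplying a row by a tangible scalar (equivalently, your multilinearity computation $\Det{B}=(a_1\cdots a_n)\Det{V}$) does not affect singularity; after that, both proofs finish identically via Lemma~\ref{iso2} and the Vandermonde formula~\eqref{eq:Van}.

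Your remark about the missing distinctness hypothesis is well taken: the theorem as stated needs the $a_i$ to have pairwise distinct $\nu$-values so that each factor $a_i+a_j$ in \eqref{eq:Van} remains tangible, and your $n=2$ example shows the conclusion fails otherwise. The paper is tacitly relying on the Remark immediately preceding Lemma~\ref{iso2}, where distinctness is assumed.
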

\begin{proof} Let $A$ be the Vandermonde matrix $(a_i^{j-1}).$
Then $Av$ is ghost, so we are done by the lemma.
\end{proof}

\begin{example}\label{Van3} Despite these nice applications of the Vandermonde
matrix,
 the Vandermonde matrix $A = \( \begin{matrix} 0 & 0 \\
1 & 2
\end{matrix}\) $ (over~$D(\Real)$)
has the poor behavior that $A^2 = \( \begin{matrix} 1 & 2 \\
3 & 4\end{matrix}\) ,$ which is singular with tropical determinant
$5^\nu$ whereas $\Det{ A } = 2; $ cf.~Example \ref{2by2}.
\end{example}

\begin{definition}\label{def:conjugate}
 A matrix $B_1$ is  (classically) \textbf{conjugate to} $B$ if $B_1
=A^\nb  B A$ for some    matrix $A$ with $\Det{A}$ invertible in
$R$. More generally, a matrix $B_1$ is
  \textbf{tropically conjugate to} $B$ if $B_1
=A^\nb  B A +\text{ghost}$ for some  \ matrix~$A$  with $\Det{A}$
invertible.
\end{definition}

\begin{lem}
If $f\in R[\la]$ is a polynomial with constant term $\fzero$. Then
for any \regular \ matrix $A$, $$f( A^\nb  B A) =  A^\nb  f(B) A +
\text{ghost} \ . $$
\end{lem}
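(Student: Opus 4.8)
The plan is to reduce the claim to its monomial case. Write $f = \sum_{k\ge 1} \a_k \lm^k$ (the constant term is $\fzero$, so there is no $k=0$ term). Since the characteristic polynomial $f_A$ is irrelevant here and we are dealing with an arbitrary polynomial, and since both $f(A^\nb BA)$ and $A^\nb f(B) A$ are built additively from the contributions of the individual monomials $\a_k \lm^k$, it suffices to treat each monomial separately and then sum: indeed, addition in $M_n(R)$ is compatible with ``$+\text{ghost}$'' in the sense that if $X_k = Y_k + \text{ghost}$ for every $k$, then $\sum_k X_k = \sum_k Y_k + \text{ghost}$ (this follows from Remark~\ref{Frob0}'s style of bookkeeping and the fact that $\tGz$ is an additive ideal, together with the characterization of ``$a = b+\text{ghost}$'' in the text right after Remark~\ref{reduc}). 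So I would first state this reduction as a one-line observation, and then concentrate on showing
$$ (A^\nb B A)^k \ = \ A^\nb B^k A \ + \ \text{ghost}, \qquad k \ge 1. $$

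The heart of the matter is a telescoping computation. Write $I_A = A A^\nb$ and $I'_A = A^\nb A$, which by Theorem~\ref{thm:idm} are quasi-identities; in particular $I_A = I + Z_\tG$ and $I'_A = I + Z'_\tG$ for quasi-zero matrices $Z_\tG, Z'_\tG \in M_n(\tGz)$. Then
$$ (A^\nb B A)^k \ = \ A^\nb B (A A^\nb) B (A A^\nb) \cdots (A A^\nb) B A \ = \ A^\nb B \, I_A \, B \, I_A \cdots I_A \, B \, A, $$
with $k$ copies of $B$ and $k-1$ copies of $I_A$ interpolated. Now replace each interior $I_A$ by $I + Z_\tG$ and expand. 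The single term where every $I_A$ contributes its $I$ is exactly $A^\nb B^k A$; every other term contains at least one factor $Z_\tG \in M_n(\tGz)$, and since $M_n(\tGz)$ is a (two-sided) ideal of $M_n(R)$, every such term lies in $M_n(\tGz)$. Hence the whole expansion equals $A^\nb B^k A$ plus a sum of ghost matrices, which is $A^\nb B^k A + \text{ghost}$. This proves the monomial case, and summing over $k$ (using the reduction above) gives $f(A^\nb B A) = A^\nb f(B) A + \text{ghost}$, completing the proof. $\square$

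The step I expect to be the only real subtlety is justifying that ``a sum of terms, each a ghost matrix, added to $A^\nb B^k A$'' may legitimately be recorded as ``$A^\nb B^k A + \text{ghost}$'' — i.e., that the ghost noise does not accidentally cancel part of $A^\nb B^k A$ or interact badly. But there is no cancellation in a supertropical semiring (no nonzero element has an additive inverse, as noted after the definition of supertropical semiring), so adding ghost matrices to $X := A^\nb B^k A$ can only leave $X$ intact on its tangible entries and possibly raise some entries to ghosts dominating the corresponding entry of $X$; this is precisely the meaning of ``$=X+\text{ghost}$'' recorded in the paragraph following Remark~\ref{reduc}. I would spell this out in one sentence. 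Everything else — associativity, the ideal property of $M_n(\tGz)$, the definition $A^\nb = \frac{\rone}{\Det A}\adj A$ making sense because $\Det A$ is invertible — is routine and can be invoked directly.
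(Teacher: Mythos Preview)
Your proof is correct and follows exactly the same route as the paper's: reduce to monomials $\la^k$, then expand $(A^\nb B A)^k = A^\nb B\, I_A\, B \cdots I_A\, B\, A$ with $I_A = I + Z_\tG$ and observe that every term containing a factor $Z_\tG$ is ghost. Your additional justification of the additive reduction and of the meaning of ``$+\,\text{ghost}$'' is sound and simply makes explicit what the paper leaves implicit.
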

\begin{proof} It is enough to check the case that $f = \la^i$ for $i\ge 1.$
Assume $B_1 = A^\nb  B A $. Let $\um _ A = A A^\nb = (I +
 Z_\tG)$, where $Z_\tG$ is a quasi-zero matrix. For any $i > 0$,
 $$ (A^\nb  B A )^i =  A^\nb  B (I +
 Z_\tG) B  \cdots B(I +
 Z_\tG) B A  =  A^\nb  B^i A + \text{ghost} \ . $$
\end{proof}

\begin{proposition}

If $B$ satisfies a polynomial $f \in R[\lm]$, $R$ is a
supertropical domain, then every tropical conjugate of $B$
satisfies $f$.
\end{proposition}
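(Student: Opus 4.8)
The plan is to lean on the preceding lemma together with the way ghosts propagate under powers. Let $B_1 = A^\nb B A + \text{ghost}$ be a tropical conjugate of $B$, say $B_1 = A^\nb B A + G$ with $G \in M_n(\tGz)$ and $\Det{A}$ invertible, and split $f = g + \a_m$, where $\a_m$ is the constant term of $f$ and $g$ has constant term $\fzero$. I would first reduce to the case $B_1 = A^\nb B A$: because $M_n(\tGz)$ is a two-sided ideal of $M_n(R)$, expanding $B_1^m = (A^\nb B A + G)^m$ and isolating the one word free of $G$ gives $B_1^m = (A^\nb B A)^m + \text{ghost}$, hence $f(B_1) = f(A^\nb B A) + \text{ghost}$. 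So it is enough to prove $f(A^\nb B A) \in M_n(\tGz)$.

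For the non-constant part, the preceding lemma gives $g(A^\nb B A) = A^\nb g(B) A + \text{ghost}$. For the constant term, $A^\nb(\a_m \um) A = \a_m\, A^\nb A = \a_m I'_A$, and since $I'_A = A^\nb A$ is a \quasi-identity, $I'_A = \um + Z'_\tG$ for a \quasi-zero matrix $Z'_\tG$, so $A^\nb (\a_m \um) A = \a_m \um + \a_m Z'_\tG$ differs from $\a_m\um$ by the ghost matrix $\a_m Z'_\tG$. Adding, $f(A^\nb B A) = A^\nb g(B) A + \a_m \um + \text{ghost}$, whereas $A^\nb f(B) A = A^\nb g(B) A + \a_m I'_A = A^\nb g(B) A + \a_m \um + \text{ghost}$ as well; and since $B$ satisfies $f$ we have $f(B) \in M_n(\tGz)$, so $A^\nb f(B) A \in M_n(\tGz)$ because $M_n(\tGz)$ is an ideal.

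It remains to upgrade these ``up to ghost'' equalities to the genuine conclusion, which I would do by inspecting entries. Write $d_k = (g(B))_{k,k}$; the off-diagonal entries of $g(B)$ equal those of $f(B)$, hence lie in $\tGz$, so $(A^\nb g(B) A)_{u,v} = \sum_k (A^\nb)_{u,k}\, d_k\, A_{k,v} + \text{ghost}$ for all $u,v$. On the diagonal, the identity $\sum_k (A^\nb)_{u,k}(d_k+\a_m)A_{k,u} = \sum_k (A^\nb)_{u,k} d_k A_{k,u} + \a_m (I'_A)_{u,u} = \sum_k (A^\nb)_{u,k} d_k A_{k,u} + \a_m$ (using $(I'_A)_{u,u}=\rone$), together with the fact that each $d_k + \a_m = (f(B))_{k,k}$ is ghost, shows $\sum_k (A^\nb)_{u,k} d_k A_{k,u} + \a_m \in \tGz$; since $(f(A^\nb B A))_{u,u}$ equals this plus a ghost, it is ghost. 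Off the diagonal the same distribution gives $\sum_k (A^\nb)_{u,k} d_k A_{k,v} + \a_m (I'_A)_{u,v} \in \tGz$ with $\a_m (I'_A)_{u,v}$ ghost, and one then argues that the ghost ``noise'' already present in $(f(A^\nb B A))_{u,v}$ is $\nu$-large enough to absorb $\sum_k (A^\nb)_{u,k} d_k A_{k,v}$, using that a tangible value of this sum runs through a diagonal $d_{k_0}$ with $d_{k_0}^\nu \le \a_m^\nu$ and the \quasi-identity inequalities $(I'_A)_{u,k}^\nu (I'_A)_{k,v}^\nu \le (I'_A)_{u,v}^\nu$ for $A^\nb A$. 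This yields $f(A^\nb B A) \in M_n(\tGz)$, hence $f(B_1)\in M_n(\tGz)$.

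The main obstacle is exactly this last off-diagonal step: the mismatch between $\a_m \um$ (from the constant term of $f$) and $\a_m A^\nb A$ is honestly non-ghost off the diagonal, reflecting the same failure of von Neumann regularity for $A\adj{A}$ discussed above, so one cannot merely invoke that $M_n(\tGz)$ is an ideal. I expect the clean way to control it is to trace the offending terms through the reduced digraph $\grph_A$, as in the proof of \thmref{hamilton-Cayley}.
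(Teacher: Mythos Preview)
Your overall strategy mirrors the paper's: reduce from a tropical conjugate to the classical conjugate $A^\nb BA$, split $f = g + \a_0$ with $g$ having zero constant term, invoke the preceding lemma for $g$, and then treat diagonal and off-diagonal entries separately. Your diagonal argument is correct and is essentially the paper's.

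The gap is in your off-diagonal step. You try to show that ghost ``noise'' already present in $(f(A^\nb BA))_{u,v}$ is $\nu$-large enough to absorb the potentially tangible part $\sum_k (A^\nb)_{u,k}\, d_k\, A_{k,v}$, invoking the quasi-identity inequalities $(I'_A)_{u,k}^\nu (I'_A)_{k,v}^\nu \le (I'_A)_{u,v}^\nu$. But off the diagonal, $f(A^\nb BA)_{u,v} = g(A^\nb BA)_{u,v}$ (the constant term $\a_0 I$ contributes nothing there), so the only ghost present is the one produced by the lemma, coming from the $Z_\tG$-terms in the expansion of $(A^\nb BA)^m$, and you have no control over its size. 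Your bound points toward the ghost $\a_0 (I'_A)_{u,v}$, which simply does not occur in $f(A^\nb BA)_{u,v}$; the inequality is therefore useless for absorption, and the digraph suggestion at the end is a red herring.

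The paper's argument is both different and more direct: it shows that the off-diagonal entries of $A^\nb g(B) A$ are \emph{themselves} ghost, so nothing needs to be absorbed. Writing $g(B) = (b_{j,k})$, the $(i,\ell)$-entry of $A^\nb g(B) A$ (times $|A|$) is $\sum_{j,k} a'_{j,i}\, b_{j,k}\, a_{k,\ell}$. For $j \ne k$, $b_{j,k}$ coincides with the $(j,k)$-entry of $f(B)$ and hence lies in $\tGz$. For $j = k$, if $b_{j,j}$ is tangible then $b_{j,j} + \a_0 = (f(B))_{j,j} \in \tGz$ forces $b_{j,j}$ to be $\nu$-matched with $\a_0$; hence, when tangible, the diagonal contribution is $\a_0 \sum_j a'_{j,i}\, a_{j,\ell}$, which is ghost by the Laplace-type adjoint identity \eqref{det3.2} of Remark~\ref{rmk:adj}(ii). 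That direct use of \eqref{det3.2} is the key step you were missing.
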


\begin{proof} It is enough to show that every conjugate
of $B$ satisfies $f$, since the added ghost only yields extra
ghost terms. Writing $f = g + \al_0$, where $g$ has constant term
$\fzero$, we have
$$ f(A^\nb  B A ) = A^\nb  g(B) A + \text{ ghost}+ \al_0 I ,$$
whereas $A^\nb  g(B) A + \al_0 A^\nb  A= A^\nb  f(B) A$ is ghost.
Write $g(B) = (b_{i,j}).$ The diagonal terms of~$ f(A^\nb  B A )$
are ghost, since they are ghosts plus the diagonal terms of
$f(B)$, which by hypothesis is ghost. Thus, we need only check the
off-diagonal terms of $A^\nb  g(B) A$, which when multiplied by
~$\Det{ A }$ have the form $\sum _{j,k} a'_{j,i}\, b_{j,k}\,
a_{k,\ell},$ for $i \ne \ell;$ we need to show that these are
ghosts.

On the other hand, $f(B) = g(B) + \a_0 I,$ so $f(B)$ and $f(A^\nb
B A )$ agree off the diagonal. When $j\ne k$, $b_{j,k}$ is either
ghost or is the same as the $(j,k)$-entry of $f(B)$, which is
ghost by hypothesis, so we may assume that $j=k$. Now, when
tangible,
$$\sum _j a'_{j,i} b_{j,j} a_{j,\ell} = \sum _j a'_{j,i} \a_0
a_{j,\ell} = \a_0 \sum _j a'_{j,i} a_{j,\ell}, $$ which is ghost
by
 \Ref{det3.2}.
\end{proof}

\section {Supertropical  eigenvectors}

We work throughout with matrices over a supertropical semifield
$F$.
\begin{defn}\label{eigen0} A vector $v$ is an \textbf{eigenvector} of $A$,
with \textbf{eigenvalue} $\bt$, if $Av = \bt
v$. The eigenvalue ~$\bt$ with $\bt^\nu$ ~maximal is said to be of
\textbf{highest weight}. \end{defn}

Definition \ref{eigen0} is standard (not requiring the language of
ghosts), and indeed it is known \cite{brualdi} that any (tangible)
matrix has an eigenvalue of highest weight. However, even counting
multiplicities, the number of eigenvalues often is less than the
size of the matrix, since certain roots of the characteristic
polynomial may be ``lost" as eigenvalues.

\begin{example}\label{eigen}  The characteristic polynomial $f_A$
of $$A =\vMat{4}{0}{0}{1}$$ over $F = D(\Real)$,
 is $(\la+4)(\la+1) +
0 = (\la+4)(\la+1),$ and indeed the vector $(4,0)$ is a
 eigenvector of $A$, with eigenvalue 4. However, there is no
eigenvector having eigenvalue 1.
\end{example}

We rectify this deficiency by weakening Definition \ref{eigen0}.
Actually, there are several possible definitions of supertropical
eigenvalue. We present two of them; the second is stronger but
suffices for our theory, so we call the first one ``weak.''

\begin{defn}\label{eigen3} A  vector $v\ne (\zero)$ is a \textbf{weak generalized supertropical  eigenvector} of $A$,
with (tangible) \textbf{weak generalized supertropical eigenvalue}
$\bt \in \tTz$,
 if $A^m v + \bt^m v$ is ghost for some $m$;
the minimal such $m$ is called the \textbf{multiplicity} of the
eigenvalue (and also of the eigenvector).

A tangible vector $v$ is a \textbf{generalized supertropical
eigenvector} of $A$, with \textbf{generalized supertropical
eigenvalue} $\bt \in \tTz$,
 if $$A^m v = \bt^m v + \text{ghost}$$ for some $m$;
the minimal such $m$ is called the \textbf{multiplicity} of the
eigenvalue (and also of the eigenvector). A \textbf{supertropical
eigenvalue} (resp.~\textbf{supertropical eigenvector} is a
 generalized supertropical eigenvalue
(resp.~generalized supertropical eigenvector)  of multiplicity 1.
\end{defn}

(Although weak generalized supertropical  eigenvectors need not be
tangible, generalized supertropical  eigenvectors are required to
be tangible, since we are about to prove that there are ``enough''
of them for a reasonable theory.  Note that if we did not require
$\bt$ to be tangible, all vectors would be weak supertropical
eigenvectors; indeed, for any given matrix $A$ and vector $v$, any
large enough ghost element $\bt $ would be a weak supertropical
eigenvalue of $A$ with respect to $v$. On the other hand, this
observation does not apply to the definition of supertropical
eigenvectors.)

 When $\nu_{\tT}$ is 1:1 (which is the case in the applications to tropical geometry), tangible weak (generalized) supertropical
eigenvectors are (generalized) supertropical eigenvectors, because
of the following observation.

\begin{lem}\label{supereig} Suppose $\nu_{\tT}$ is 1:1. If $v$ is tangible and $A^m v + \bt^m
v$ is ghost for $\bt \in \tT$, then $A^m v = \bt^m v +
\text{ghost}$.
\end{lem}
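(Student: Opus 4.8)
The plan is to verify the vector identity one coordinate at a time. Write $w = A^m v$ and $u = \bt^m v$; by the convention for ``$w = u + \text{ghost}$'', what must be shown is that for every index $i$ either $w_i = u_i$ (when $w_i$ is tangible) or $w_i \in \tG$ with $w_i^\nu \ge u_i^\nu$. First I would record the single structural fact about $u$ that is needed: since $R$ is a supertropical domain, $\tT$ is a monoid, so $\bt^m \in \tT$ and hence each $u_i = \bt^m v_i$ lies in $\tTz$ --- it is tangible when $v_i \in \tT$, and equals $\rzero$ when $v_i = \rzero$. The hypothesis that $A^m v + \bt^m v$ is ghost says exactly that $w_i + u_i \in \tGz$ for every $i$.

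Then, fixing $i$, I would invoke the supertropical addition rule, by which $(w_i + u_i)^\nu = \max\{w_i^\nu, u_i^\nu\}$ and $w_i + u_i$ is the dominant of $w_i, u_i$ when these differ in $\nu$-value, and split into three cases. If $w_i^\nu > u_i^\nu$, then $w_i + u_i = w_i \in \tGz$ with $w_i^\nu \ne \rzero^\nu$, so $w_i \in \tG$, and $w_i^\nu \ge u_i^\nu$ gives $w_i = u_i + w_i = u_i + \text{ghost}$. If $w_i^\nu < u_i^\nu$, then $w_i + u_i = u_i \in \tGz$ would make $u_i$ a nonzero ghost, impossible since $u_i \in \tTz$; so this case does not occur. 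Neither of these two cases uses injectivity of $\nu_\tT$.

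The remaining case $w_i^\nu = u_i^\nu$ is the crux and the only place where the hypothesis that $\nu_\tT$ is 1:1 is used. If $w_i$ is a ghost (or $\rzero$, in which case $u_i = \rzero = w_i$ too), then $w_i^\nu \ge u_i^\nu$ again gives $w_i = u_i + w_i = u_i + \text{ghost}$. If $w_i$ is tangible, then $w_i^\nu = u_i^\nu \ne \rzero^\nu$ forces $u_i \in \tT$ as well, so $w_i$ and $u_i$ are tangible elements with the same $\nu$-value, and injectivity of $\nu_\tT$ yields $w_i = u_i$. Collecting the coordinates gives $A^m v = \bt^m v + \text{ghost}$. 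I do not expect a genuine obstacle here: the only care required is in handling the $\rzero$-coordinates and in applying the supertropical trichotomy, the substantive content being the one-line appeal to injectivity of $\nu_\tT$ in the equal-value case.
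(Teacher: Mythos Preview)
Your proof is correct and follows the same coordinate-wise approach as the paper: both arguments use that $\bt^m v_i \in \tTz$ (since $\tT$ is a multiplicative monoid and $v$ is tangible), and then analyze how $s_i + \bt^m v_i$ can land in $\tGz$. The paper's proof is more terse, simply asserting that this forces either $s_i = \bt^m v_i$ or $s_i$ ghost dominating $\bt^m v_i$; your three-case trichotomy on $\nu$-values unpacks exactly that assertion, and in particular makes explicit where the injectivity of $\nu_\tT$ enters (the equal-$\nu$-value, tangible case), which the paper leaves implicit.
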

\begin{proof} Write  $v = (r_1, \dots, r_n)$ where each $r_i \in \tTz$, and $A^m v = (s_1, \dots, s_n).$
 But then $\bt^m r_i \in
\tTz,$ so the $i$-th component $s_i + \bt^m r_i$ of $A^m v + \bt^m
v$ can be ghost only when  $s_i=\bt^m v_i$ or $s_i$ is ghost
dominating $\bt^m v_i$, in which case
$$s_i = s_i + \bt^m r_i = \bt^m v_i + \ghost. $$
\end{proof}

\begin{example}\label{eigen4} The matrix
$A =\vMat{4}{0}{0}{1}$ of ~Example \ref{eigen} also has the
tangible supertropical eigenvector $v = (0, 4)$, corresponding to
the supertropical
 eigenvalue~$1$, since $$Av = (4^\nu, 5) = 1   v +(4^\nu, 0^\nu).$$
\end{example}

\begin{rem}\label{eigen33}
Let $A_\tang $ denote the matrix obtained by replacing each ghost
entry of $A$ by $\fzero.$ Then $A = A_\tang +\ghost,$ so clearly
every (generalized) supertropical eigenvalue of $ A_\tang$ is a
(generalized) supertropical eigenvalue of $A$. This enables us to
reduce many questions about supertropical eigenvalues to tangible
matrices.
\end{rem}

We  also want to study  supertropical eigenvalues in terms of
other notions.

\begin{prop}\label{eigen50} The matrix $A + \beta I$ is
singular, iff $\beta$ is a root of the  characteristic polynomial
$f_A$ of~$A$.
\end{prop}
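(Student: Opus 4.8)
The plan is to reduce everything to the single identity $\Det{A+\beta I} = f_A(\beta)$; once this is established the proposition is immediate. Indeed, by Definition~\ref{esschar} we have $f_A = \Det{\la I + A}$, and by the definition of a \emph{root} of a polynomial, $\beta$ is a root of $f_A$ precisely when $f_A(\beta) \in \tGz$. On the other hand, $A + \beta I$ is singular precisely when $\Det{A + \beta I} \in \tGz$, by the definition of a singular matrix. So if $\Det{A+\beta I} = f_A(\beta)$, the two conditions are literally the same condition on an element of $F$, and the proof is finished.

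To prove the identity I would use that, since $F$ is commutative, substitution $\la \mapsto \beta$ is a semiring homomorphism $\mathrm{ev}_\beta : F[\la] \to F$ fixing $F$. Now $\Det{\la I + A} = \sum_{\pi \in S_n} \prod_{i=1}^n (\la I + A)_{i,\pi(i)}$ is a sum of products of the entries of the matrix $\la I + A$, and $\mathrm{ev}_\beta$ sends the $(i,j)$-entry of $\la I + A$ to the $(i,j)$-entry of $\beta I + A$; applying the homomorphism to this permutation expansion therefore gives $f_A(\beta) = \mathrm{ev}_\beta(\Det{\la I+A}) = \sum_{\pi} \prod_i (\beta I + A)_{i,\pi(i)} = \Det{\beta I + A} = \Det{A + \beta I}$. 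Alternatively, one can verify the identity by brute expansion: for each $\pi$, factor $\prod_i (\la I + A)_{i,\pi(i)}$ as $\prod_{i \in \mathrm{Fix}(\pi)} (\la + a_{i,i}) \cdot \prod_{i \notin \mathrm{Fix}(\pi)} a_{i,\pi(i)}$ and distribute; collecting the coefficient of $\la^{n-k}$ recovers the sum of the weights of all $k$-multicycles in the reduced digraph $\grph_A$, which is $\a_k$, exactly as in the discussion following Definition~\ref{esschar}, and the same bookkeeping with $\beta$ in place of $\la$ shows the coefficient of $\beta^{n-k}$ in $\Det{A + \beta I}$ is again $\a_k$.

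I do not expect a real obstacle here. The only point that deserves a word of care is the standard fact that evaluation of a polynomial over a \emph{commutative} semiring is a semiring homomorphism, so that substitution legitimately commutes with the permutation-sum form of the tropical determinant; one should also note that the notion of root in force is the evaluation-based one, $f(a) \in \tGz$, so no subtlety about polynomial-versus-function enters.
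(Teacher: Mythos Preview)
Your argument is correct. Both you and the paper reduce the proposition to the identity $\Det{A+\beta I}=f_A(\beta)$; the difference is only in how that identity is justified. The paper argues combinatorially: it reads off $\Det{A+\beta I}$ directly from the weighted digraph, noting that a dominant term picks $n-k$ copies of $\beta$ from the diagonal and a $k$-multicycle of maximal weight $\alpha_k$ from $A$, so that $\Det{A+\beta I}=\sum_k \alpha_k\beta^{n-k}=f_A(\beta)$, and then unwinds the ghost condition termwise. Your primary route is the cleaner algebraic one: since $F$ is commutative, $\mathrm{ev}_\beta:F[\lambda]\to F$ is a semiring homomorphism, and applying it entrywise to the permutation expansion of $\Det{\lambda I+A}$ immediately yields $\Det{\beta I+A}$. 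Your ``alternative'' brute-force expansion via $k$-multicycles is exactly the paper's argument. The algebraic route is shorter and avoids the graph bookkeeping; the paper's route has the minor advantage of making explicit which multicycles witness the ghost when $A+\beta I$ is singular, information that gets used informally elsewhere in the section.
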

\begin{proof} The determinant of $A + \beta I$ comes from \nmulti
s of greatest weight. Since the contribution from $\beta I$ comes
from say $n-k$ entries of $\bt$ along the diagonal, the remaining
$k$ entries must come from a $k$-multicycle, in the graph of $A$,
which dominates the $k$-multicycles and has some total weight
$\a_k$. On the other hand, as already noted in the proof of
Theorem \ref{hamilton-Cayley}, $\a_k$ is precisely the coefficient
of $\la^{n-k}$ in $f_A$. Thus, $|A + \beta I| \in \tGz$ iff either
$\a_k \in \tGz$ or some other $\a_{k'} \bt^{k'}$ matches  $\a_{k}
\bt^{k}$ (and dominates all other $\a_{j} \bt^{j}$); but this is
precisely the criterion for $\bt$ to be a root of $f_A,$ proving
the assertion.
\end{proof}

\begin{prop}\label{eigen5} If $v$ is a tangible  supertropical eigenvector of
$A$ with  supertropical eigenvalue $\bt$, the matrix $A + \bt I$
is singular (and thus $\bt$ must be a (tropical) root of the
characteristic polynomial $f_A$ of $A$).
\end{prop}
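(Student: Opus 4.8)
The plan is to evaluate $(A+\bt I)v$ directly from the eigenvector relation, observe that the result is a ghost vector, and then quote Lemma~\ref{iso2} together with Proposition~\ref{eigen50}.

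Since $v$ is a supertropical eigenvector of multiplicity $1$, by definition $Av = \bt v + \ghost$. Write $v = (r_1,\dots,r_n)$ with each $r_i\in\tTz$ (possible since $v$ is tangible) and not all $r_i = \rzero$, and write $Av = (s_1,\dots,s_n)$, so that $s_i = \bt r_i + g_i$ with $g_i\in\tGz$ for each $i$. By distributivity of the $M_n(R)$-action, $(A+\bt I)v = Av + \bt v$, whose $i$-th component is
$$s_i + \bt r_i = (\bt r_i + \bt r_i) + g_i = (\bt r_i)^\nu + g_i,$$
using the supertropical identity $a+a = a^\nu$ of Equation~\eqref{supert}. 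Since $(\bt r_i)^\nu\in\tGz$ and $g_i\in\tGz$, this component lies in $\tGz$; hence $(A+\bt I)v$ is a ghost vector. This also disposes of the degenerate cases $\bt=\rzero$ or $r_i=\rzero$, where the component in question is a priori ghost or zero.

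Now $v$ is a tangible vector with $(A+\bt I)v$ a ghost vector, so Lemma~\ref{iso2} --- whose proof passes through Corollary~\ref{coldep}, the columns of $A+\bt I$ being tropically dependent via the tangible coefficients $r_1,\dots,r_n$, not all $\rzero$ --- shows that $A+\bt I$ is singular. Finally, Proposition~\ref{eigen50} identifies singularity of $A+\bt I$ with $\bt$ being a root of $f_A$, which gives the parenthetical assertion. Essentially all the work has been front-loaded into Lemma~\ref{iso2}, Corollary~\ref{coldep}, and Proposition~\ref{eigen50}, so there is no real obstacle here; the only point deserving care is to apply $a+a=a^\nu$ componentwise before absorbing the ghost terms $g_i$, and to check that this still yields a ghost component when $r_i = \rzero$.
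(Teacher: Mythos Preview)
Your proof is correct and follows the same underlying idea as the paper: both arguments first observe that $(A+\bt I)v$ is a ghost vector and then invoke Lemma~\ref{iso2} to conclude that $A+\bt I$ is singular, with Proposition~\ref{eigen50} supplying the parenthetical conclusion about $f_A$.

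The only difference is that the paper takes a small detour: after noting that $(A+\bt I)v$ is ghost, it multiplies by $\adj{A+\bt I}$, assumes for contradiction that $A+\bt I$ is nonsingular, and then applies Lemma~\ref{iso2} to the resulting quasi-identity $I'_{A+\bt I}$ rather than to $A+\bt I$ itself. Your route is more direct---Lemma~\ref{iso2} already applies to $A+\bt I$ with the tangible vector $v$, so the adjoint step is unnecessary. Your explicit componentwise verification that $s_i+\bt r_i=(\bt r_i)^\nu+g_i\in\tGz$ also makes transparent the point the paper leaves implicit when it asserts ``$(A+\bt I)v$ is ghost.''
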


\begin{proof}
 $(A +\bt I)v$ is ghost, and thus
so is $\adj{{A+\bt I}}(A+\bt I)v.$ If $A+\bt I$ were nonsingular
this would be $f_A(\bt)I_{A+\bt I} \, v$, implying $I_{A+\bt I}$
is ghost, by Lemma \ref{iso2}, a contradiction.
\end{proof}

Our goal is to prove the converse, that every tangible root of the
 characteristic polynomial of $A$ is a supertropical
eigenvalue (of a tangible supertropical eigenvector). First of
all, let us reduce the theory to tangible matrices.

\begin{rem} If $\widehat A$ is a tangible matrix (i.e., all entries
are in $\tTz$), such that $\widehat A^\nu = A^\nu,$ then every
tangible supertropical eigenvector $v$ of $\widehat A$ is a
supertropical eigenvector of $A$, with the same supertropical
eigenvalue. (Indeed, let $\bt$ be the eigenvalue of $v$ for $
\widehat A$. obviously $\widehat A v$ and $A v$ are $\nu$-matched,
with every tangible component of $\widehat A v$ matched by a
tangible component of $Av,$ so $$A v = \widehat Av + \ghost = \bt
v + \ghost.)$$\end{rem}

\begin{thm}\label{eigen8} Assume that $\nu|_\tT : \tT \to \tG$ is 1:1. For any  matrix $A$,
the dominant tangible root of the  characteristic polynomial
${{{f}_A}}^{\essn}=\la^n + \sum_{j=1}^t \a _{k_j} \la ^{n-k_j}$ of
$A$ is an eigenvalue of $A$, and has  a tangible eigenvector. The
matrix $A$ has at least $t$ supertropical tangible eigenvectors,
whose respective tangible eigenvalues are precisely the tangible
roots of ${{{\hat f}_A}}^{\essn}$.
\end{thm}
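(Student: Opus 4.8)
The plan is to push every tangible root of $\hat f_A^{\essn}$ through the singularity criterion of \thmref{thm:base}, and then to strengthen the conclusion at the \emph{dominant} root, using that it is the maximal average cycle weight of $\grph_A$. First I would reduce to a tangible matrix: by \remref{eigen33} together with the Remark preceding the theorem, if $\widehat A$ is tangible with $\widehat A^\nu = A^\nu$ then $f_{\widehat A}$ and $f_A$ have coefficients of equal $\nu$-value and every tangible supertropical eigenvector of $\widehat A$ is one of $A$ with the same eigenvalue; so we may assume $A = A_\tang$. By the theory of supertropical polynomials \cite{IzhakianRowen2007SuperTropical}, the essential polynomial $\hat f_A^{\essn}$, having $t+1$ essential monomials, has exactly $t$ tangible roots, coinciding in $\nu$-value with those of $f_A^{\essn}$; and since $\a_k$ is the highest weight of the $k$-multicycles of $\grph_A$ (noted after Definition~\ref{esschar}), the dominant such root $\bt$ is the maximal average cycle weight of $\grph_A$, i.e.\ the classical tropical eigenvalue.

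Now the common mechanism. Fix a tangible root $\bt$ of $\hat f_A^{\essn}$; it is a tangible root of $f_A$, so $A + \bt I$ is singular by \propref{eigen50}, hence so is its transpose by \remref{transpose}. By \corref{coldep} the rows of $(A+\bt I)^{\trn}$ --- that is, the columns of $A+\bt I$ --- are tropically dependent, and \thmref{thm:base}, which as advertised exhibits the dependence, yields an explicit tangible vector $v\neq(\zero)$ with $(A+\bt I)v = Av + \bt v$ a ghost vector. Since $\bt\in\tT$ and $\nu|_\tT$ is injective, \lemref{supereig} with $m=1$ upgrades this to $Av = \bt v + \ghost$, so $v$ is a tangible supertropical eigenvector with supertropical eigenvalue $\bt$. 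Letting $\bt$ range over the $t$ tangible roots of $\hat f_A^{\essn}$ exhibits at least $t$ tangible supertropical eigenvectors whose eigenvalues are precisely those roots (a single tangible eigenvector cannot serve two distinct tangible eigenvalues $\bt\neq\bt'$, since that would force $\bt r = \bt' r$ on any tangible component $r\neq\rzero$ of its image); conversely, \propref{eigen5} records that any tangible supertropical eigenvalue of $A$ is a root of $f_A$.

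It remains to promote the dominant root $\bt$ to a genuine eigenvalue with a genuine tangible eigenvector. The vector $v$ produced above satisfies $Av = \bt v + \ghost$, hence $Av \geq \bt v$ in the $\nu$-order; since multiplication by $A$ and by $\frac{1}{\bt}$ is order preserving, the sequence $w_m := \frac{A^m v}{\bt^m}$ is non-decreasing, and since $\bt^\nu$ is the maximal average cycle weight every cycle of $\grph_A$ with weights divided by $\bt$ has weight of $\nu$-value at most $\rone^\nu$, so the entries of $\frac{A^m}{\bt^m}$ stay bounded and $(w_m)$ is bounded above; by the cyclicity of tropical matrix powers (cf.~\cite{ABG}) it is eventually periodic, hence eventually constant, stabilizing to $w$ with $Aw = \bt w$. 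Equivalently, one may run the classical critical-graph / Kleene-star construction of a max-plus eigenvector over $R$ along a single critical cycle of $\grph_A$, cf.~\cite{ABG,brualdi}, invoking the injectivity of $\nu|_\tT$ to choose the outcome tangible. I expect this last step to be the main obstacle: the column-dependence machinery only delivers the weaker relation $Av = \bt v + \ghost$, and eliminating the ghost part really does need both that $\bt$ is dominant (so the normalized powers neither blow up nor develop competing maxima) and that $\nu|_\tT$ is injective (so a tangible representative can be chosen) --- for a non-dominant root no such elimination is possible, as \exampref{eigen} shows.
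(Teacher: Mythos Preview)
Your ``common mechanism'' paragraph is exactly the paper's proof: for each tangible root $\bt$ of $f_A$, Proposition~\ref{eigen50} gives $A+\bt I$ singular, Corollary~\ref{coldep} gives a tangible column dependence $v$, and Lemma~\ref{supereig} (using that $\nu|_\tT$ is $1{:}1$) upgrades $(A+\bt I)v\in\tGz^{(n)}$ to $Av=\bt v+\ghost$. That is all the paper writes; your reduction to $A_\tang$ via Remark~\ref{eigen33} is harmless but not needed there.

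Where you diverge is in supplying an argument for the first sentence of the statement --- that the \emph{dominant} root is a genuine eigenvalue in the sense of Definition~\ref{eigen0} with a genuine tangible eigenvector. The paper's written proof does not treat this claim separately; the surrounding text simply records (citing \cite{brualdi}) that any tangible matrix has an eigenvalue of highest weight, and the post-proof summary speaks only of supertropical eigenvalues. So your extra paragraph is not redundant, but it is also not airtight. The monotone-bounded-periodic route has two soft spots: first, ``eventually periodic hence eventually constant'' applies to the $\nu$-values of $w_m$, but the actual vectors $A^m v/\bt^m$ live in $R^{(n)}$, and once a component of $Av$ is ghost it stays ghost under further multiplication by $A$ --- so your limit $w$ may well fail to be tangible, and $Aw=\bt w$ may hold only in $\nu$-value, which is precisely the supertropical relation you already had. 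Second, the fallback ``run the Kleene-star / critical-graph construction and use injectivity of $\nu|_\tT$ to choose the outcome tangible'' is the right idea, but it is a citation, not an argument carried out in the supertropical setting; you would need to check that the critical-circuit column of the Kleene star of $A/\bt$ really has all entries in $\tTz$, which amounts to showing that for each vertex the maximal-weight walk to the chosen critical vertex is \emph{uniquely} attained --- and that can fail. In short: the supertropical part of your proof matches the paper; the genuine-eigenvalue part goes beyond what the paper proves and, as written, does not close the tangibility gap.
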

\begin{proof}  Let $B = A + \bt I$. By Proposition \ref{eigen50},
$B$ is singular, which implies by Corollary~\ref{coldep} that its
columns $c_1, \dots, c_n$ are tropically dependent. Taking
tangible $\gm_1, \dots, \gm_n$, not all of them $\fzero$, such
that $\sum \gm_j c_j \in \tGz^{(n)},$ and letting $v = (\gm_1,
\dots, \gm_n),$ we see that
$$Av + \bt v = B v = \sum \gm_j c_j \in \tGz^{(n)},$$ implying by Lemma
\ref{supereig} that $Av = \bt v + \ghost,$ as desired.
\end{proof}

We have proved that the supertropical eigenvalues  are precisely
the roots of the characteristic polynomial.
 On the other
hand, there may be extra cycles that also contribute weak
supertropical eigenvectors, providing weak supertropical
eigenvalues that  are not roots of the characteristic polynomial.
Let us illustrate this feature.

\begin{example} Let $A$ be the $3 \times 3$ tropical matrix
$$\(  \begin{matrix} -\infty & 14
& 8 \\ 0 & -\infty & -\infty\\ 0  & 1 &   -\infty\end{matrix}\)
,$$ in logarithmic notation.  The tangible characteristic
polynomial is $\la^3 +  14 \la + 9$ whose tangible roots are $7$
and $-5$, and the supertropical tangible eigenvectors
corresponding to $f_A$ are:

\begin{enumerate}
\item $(7, 0, 0)$ of eigenvalue $7,$  which arises from the cycle
$(1,2),\, (2,1)$ of weight $\frac{14}2 = 7$. \pSkip

 \item The tangible supertropical  eigenvector $v=
(0,5,11)$; here
$$Av = (19^\nu, 0, 6) = (-5)v + (19^\nu, -\infty, -\infty) .$$
\end{enumerate}

Note that the other cycles give rise to weak  supertropical
eigenvectors, although not tangible:
\begin{enumerate} \item  The cycle  $(1,3),\,
(3,1)$  yields the  supertropical eigenvector $(10^\nu, 0, 6)$ of
supertropical eigenvalue~$4$. \pSkip
 \item The cycle $(1,3),\, (2,1),\, (3,2)$ of weight $\frac{9}3 = 3$   yields the supertropical eigenvector $(6^\nu, 3^\nu,
0)$ of supertropical eigenvalue $3$.\end{enumerate}
\end{example}

\begin{example} Let $A$ be the $3 \times 3$ tropical matrix $$\(  \begin{matrix} -\infty & -\infty
& 7 \\ 4 & -\infty & -\infty\\ 3 & 5 & -\infty\end{matrix}\)  ,$$
over the extended max-plus semiring $D(\Real)$  (in logarithmic
notation). We look for an eigenvector $(0, \gm_2, \gm_3)$, by
means of rather crude computations.
 For any
supertropical eigenvalue~$\bt,$ we have the three equations (in
$\R ,$ with respect to the familiar addition and multiplication):
\pSkip
\begin{enumerate}
\item $7+\gm_3 = \bt;$ \pSkip \item $4 = \gm_2 + \bt;$ \pSkip \item $\max
\{ 3, 5+ \gm_2\} = \gm_3 + \bt.$\pSkip
\end{enumerate}

Adding the first two equations yields $\gm_2 + \gm_3 + 3 = 0.$
Thus, plugging into (3) yields either $3 = \gm_3 + \bt$ or $3\gm_3
= -5.$ In the former case, we get $v = (0, -1, -2),$ which is not
an eigenvector since $Av = (5,4,4)$! (The reason is that reversing
the steps in the proposed solution does not satisfy (3).)

On the other hand, $v = (0, -1^\nu, -2)$ is a weak supertropical
eigenvector, since $Av = (5,4,4^\nu),$ and then $$Av =  5v +
(0^\nu,0^\nu,4^\nu);$$ thus $5$ is a weak supertropical
eigenvalue. Also $A^2v = (11^\nu, 9,9),$ and  $A^3v = (16, 15^\nu,
14^\nu),$ implying 5 is a supertropical eigenvalue of $A^2v.$  But
these weak supertropical eigenvectors are quite strange, since
$A^3v = 16v+\text{ghost}$,  whereby we see that $v$ is a
generalized supertropical eigenvector for the   generalized
supertropical eigenvalue $\frac {16}3.$

In the latter case, we get $\gm_3 = -\frac 53,$ in which case
$\gm_2 = -\frac 43,$ so $v = (0, -\frac 43, -\frac 53)$, which is
a supertropical eigenvector  with supertropical eigenvalue
$\frac{16}3.$

If one plays a bit more with the equations, one also gets the weak
supertropical eigenvector $(0, -2, -1^\nu)$, with weak
supertropical eigenvalue $6$. But, again,  $A^3v =
16v+\text{ghost}$.

The mystery can be cleared up by examining the characteristic
polynomial $\la ^3 + 10 \la + 16$ of $A$. The essential part of
$f_A$ is $\la ^3 + 16,$ whose only tangible root is $\bt = \frac
{16}3,$ and indeed we get the supertropical eigenvector $(0,
-\frac 43, -\frac 53)$ by applying the proof to $v_0 = (0,
-\infty, -\infty)$ and $\bt = \frac {16}3.$
\end{example}

Here is a surprising counterexample to a natural conjecture.

\begin{example}
Let $A =  \left(\begin{matrix} 0 & 0 \\
1 & 2
\end{matrix}\right),$
of Example \ref{Van3}. Its characteristic polynomial is $\la^2 +
2\la +2 = (\la+0)(\la+2),$ whose roots are $2$ and $0$. The
eigenvalue $2$ has tangible eigenvector $v = (0, 2)$  since $Aw =
(2, 4) = 2v$, but there are no other tangible eigenvalues. $A$
does have the tangible supertropical eigenvalue $0$, with tangible
supertropical eigenvector $w = (2,1),$ since
$$Aw = (2, 3^\nu) = 0w + ( -\infty, 3^\nu).$$
Note that $A + 0I = \left(\begin{matrix} 0^\nu & 0 \\
1 & 2
\end{matrix}\right)$ is singular; i.e., $|A + 0I| = 2^\nu $.

Furthermore, $A^2 =  \left(\begin{matrix} 1 & 2 \\
3 & 4
\end{matrix}\right),$ which is singular, and $$A^4 =   \left(\begin{matrix} 5 & 6 \\
7 & 8
\end{matrix}\right) = 4 A,$$
implying that $A^2 $ is a root of $\la^2 + 4A,$ and thus $A $ is a
root of $g = \la^4 + 4A^2 = (\la(\la+2))^2,$ but $0$ is not a root
of $g$ although it is a root of $f_A$. This shows that the naive
formulation of Frobenius' theorem fails in the supertropical
theory.
\end{example}

 Let us say that a matrix $A$ is \textbf{separable} if its characteristic polynomial $f_A$
  splits as the product of
distinct monic linear tangible factors. (Equivalently, $f_A = \sum
_{i= 0}^n \a _i \la ^i$ is essential, with each $\a_i \in F$
tangible.) Let $U_A$ be the matrix whose columns are supertropical
eigenvectors of $A$. We conjecture  that the matrix $U_A$ is
nonsingular. The argument seems to be rather intricate, involving
a description of the \multi s of $A$ in terms of its eigenvalues,
so, for the time being, we insert this as a hypothesis.

\begin{cor}\label{simdiag}  Every separable $n\times n$ matrix $A$
(for which $U_A$ is nonsingular) is tropically conjugate to a
diagonal matrix, in the sense that
$$ U_A^\nb AU_A = D_A + \ghost,$$  where $D_A$ is the diagonal matrix
whose entries $\{ \bt_1, \dots, \bt_n\}$  are the supertropical
eigenvalues of $A$.
 \end{cor}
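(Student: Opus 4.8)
The plan is to assemble the tangible eigenvectors of $A$ as the columns of $U_A$ and then read the conjugation off one column at a time. First I would unwind separability: by hypothesis $f_A = (\la + \bt_1)\cdots(\la + \bt_n)$ with the $\bt_i \in F$ distinct and tangible, so $f_A$ is essential with tangible coefficients and its tangible roots are exactly $\bt_1, \dots, \bt_n$. By Theorem~\ref{eigen8} (with the hypothesis $\nu|_\tT$ $1{:}1$ in force), each $\bt_i$ is a supertropical eigenvalue of $A$ carried by a tangible supertropical eigenvector $v_i$, i.e.\ $A v_i = \bt_i v_i + \ghost$. Take $U_A$ to be the matrix whose $i$-th column is $v_i$. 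By the standing hypothesis $U_A$ is \regular, so $\Det{U_A} \in \tT$ is invertible in the semifield $F$; hence the canonical quasi-inverse $U_A^\nb$ is defined, and by Theorem~\ref{thm:idm} the product $U_A^\nb U_A$ is a quasi-identity, say $U_A^\nb U_A = \um + Z_\tG$ with $Z_\tG$ a quasi-zero matrix.

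Next I would record the eigenvalue equations in matrix form. Writing $D_A = \diag\{\bt_1, \dots, \bt_n\}$, the $i$-th column of $U_A D_A$ is $\bt_i v_i$ while the $i$-th column of $A U_A$ is $A v_i = \bt_i v_i + \ghost$; hence $A U_A = U_A D_A + G$ for some $G \in M_n(\tGz)$. Multiplying on the left by $U_A^\nb$ and using associativity of matrix multiplication,
$$U_A^\nb A U_A \;=\; (U_A^\nb U_A)\,D_A + U_A^\nb G \;=\; (\um + Z_\tG)\,D_A + U_A^\nb G \;=\; D_A + Z_\tG D_A + U_A^\nb G.$$
Every entry of a quasi-zero matrix lies in $\tGz$, so $Z_\tG \in M_n(\tGz)$; since $M_n(\tGz)$ is an ideal of $M_n(R)$, both $Z_\tG D_A$ and $U_A^\nb G$ lie in $M_n(\tGz)$. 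Therefore $U_A^\nb A U_A$ differs from $D_A$ only by a matrix in $M_n(\tGz)$, and this is exactly the claim $U_A^\nb A U_A = D_A + \ghost$: on the diagonal it reads $\bt_i + \ghost$ (the entry stays $\bt_i$, or becomes a ghost of $\nu$-value $\ge \bt_i^\nu$), and off the diagonal it reads $\rzero + \ghost$, i.e.\ the off-diagonal entries are ghost.

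The only real subtlety is that the quasi-identity $U_A^\nb U_A$, though not equal to $\um$, differs from it only by the quasi-zero matrix $Z_\tG$, so multiplying it into $D_A$ perturbs the diagonal only by ghosts and leaves the tangible entries $\bt_i$ essentially intact, which is what pins the conclusion down in the sharp form asserted rather than something weaker. The genuinely substantive difficulty lies elsewhere: proving that $U_A$ is \regular, i.e.\ that the $n$ tangible supertropical eigenvectors of a separable matrix are tropically independent, is exactly what has been folded into the hypothesis; as the paragraph preceding the corollary indicates, establishing it would require a finer analysis describing the \multi s of $A$ in terms of its eigenvalues.
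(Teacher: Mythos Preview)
Your proof is correct and follows essentially the same route as the paper's: assemble the eigenvector relations into $AU_A = U_A D_A + \ghost$, left-multiply by $U_A^\nb$, and use that $U_A^\nb U_A = \um'_{U_A} = \um + Z_\tG$ is a quasi-identity so that the non-$D_A$ terms land in $M_n(\tGz)$. Your version is more explicit in tracking where the ghost contributions come from, but the argument is the same.
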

 \begin{proof} Suppose $f = \prod_{i=1}^n (\la + \bt _i).$
  Then taking supertropical eigenvectors $v_i$ for which  $$Av_i = \bt _i v_i + \text{ghost},$$
  we have
$AU_A = U_AD_A + \text{ghost},$ implying  $$
\begin{array}{lllll}
  U_A^\nb AU_A  & = &  U_A^\nb U_A D_A + U_A^\nb \text{ghost} & =&    \um'_{U_A} D_A +
\text{ghost} \\[2mm]
   & = &  (\um + \ghost) D_A + \text{ghost} & = &  D_A +
\text{ghost}. \\
\end{array}$$
\end{proof}


\end{document}